\newtheorem{theorem}{Theorem}
\newtheorem{lemma}[theorem]{Lemma}
\newtheorem{proposition}[theorem]{Proposition}
\newtheorem{remark}[theorem]{Remark}
\newtheorem{corollary}[theorem]{Corollary}
\numberwithin{equation}{section}
\title[Blow-up and local uniqueness]{Blow-up rate and local uniqueness for fractional Schr\"odinger equations with nearly critical growth}
\author[D.~Cassani]{Daniele Cassani$^\text{1}$}
\author[Y.~Wang]{Youjun Wang$^\text{2}$}
\address[D. Cassani]{\newline\indent Dip. di Scienza e Alta Tecnologia
	\newline\indent
	Universit\'{a} degli Studi dell'Insubria
	\newline\indent and
	\newline\indent RISM--Riemann International School of Mathematics
	\newline\indent Villa Toeplitz, Via G.B. Vico, 46 -- 21100 Varese}
    \email{\href{mailto:daniele.cassani@uninsubria.it}{daniele.cassani@uninsubria.it}}
\address[Y.~Wang]{\newline\indent Department of Mathematics
	\newline\indent
	South China University of Technology
	\newline\indent Guangzhou 510640, P. R. China
	}
\email{\href{mailto:scyjwang@scut.edu.cn}{scyjwang@scut.edu.cn}}
\thanks{(1) Corresponding author: daniele.cassani@uninsubria.it}
\subjclass[2010]{35A15; 35J60; 35B40}
\date{\today}
\keywords{Nonlocal equations, Fractional Laplacian, blow-up phenomena, ground states, critical growth.}
\begin{document}

\begin{abstract}
	We study quantitative aspects and concentration phenomena for ground states of the following nonlocal  Schr\"odinger equation
\begin{equation*}\label{maineq0}
 (-\Delta)^s u+V(x)u= u^{2_s^*-1-\epsilon}  \ \
     \text{in}\ \ \mathbb{R}^N,   \end{equation*}
   where $\epsilon>0$, $s\in (0,1)$, $2^*_s:=\frac{2N}{N-2s}$,  $N>4s$. We show that the ground state $u_\epsilon$ blows up and precisely with the following rate $\|u_\epsilon\|_{L^\infty(\mathbb{R}^N)}\sim   \epsilon^{-\frac{N-2s}{4s}}$, as
   $\epsilon\rightarrow 0^+$. We also localize the concentration points and, in the case of radial potentials $V$, we prove local uniqueness of sequences of ground states which exhibit a concentrating behavior.
   \end{abstract}

\maketitle

%%%%%%%%%%%%%%%%%%%%%%%%%%%%%%%%%%%%%
\section{Introduction}\label{introduction}

\noindent In this paper we consider the following class of nonlocal equations
\begin{equation}\label{maineq0}
 (-\Delta)^s u+V(x)u= u^{2_s^*-1-\epsilon}  \ \
       \text{in}\ \ \mathbb{R}^N,   \end{equation}
where $\epsilon\rightarrow 0^+$, $s\in (0,1)$, $2^*_s:=\frac{2N}{N-2s}$,  $N>4s$, $V:\mathbb{R}^N\rightarrow \mathbb{R}$ is a potential function   and
$$(-\Delta)^s u(x)=c_{N,s}\text{PV}\int_{\mathbb{R}^n}\frac{u(x)-u(y)}{|x-y|^{N+2s}}dy  $$   is the fractional Laplacian. Here, $c_{N,s}$ is a normalizing constant, PV stands for the Cauchy principal value.

\noindent For  fixed $\epsilon \in (0,2_s^*-2)$, under suitable conditions  on $V(x)$, it is known that equation (\ref{maineq0}) admits a positive  ground state $u_\epsilon$, see for instance \cite{MMF14, MFEV15,PEAJ12,AM16}. Moreover, if $V(x)=1$, then $u_\epsilon$ is spherically symmetric, see \cite{chelili2017a,DPV13}.
However, when  $\epsilon =0$  it follows from a Pohozaev type identity  that   (\ref{maineq0}) has no solutions  in $H^s(\mathbb{R}^N)\cap L^\infty(\mathbb{R}^N)$ if $V(x)+\frac{1}{2s}x\cdot \nabla V(x)\geq 0$ (and $\not\equiv 0$), see  Theorem \ref{Pohozae} or    \cite{XW13} in the special case $V(x)\equiv 1$. 

\noindent Therefore, it is natural to wonder what happens to the ground state $u_\epsilon$ as $\epsilon\rightarrow 0^+$. The main motivation of this paper is to achieve a better understanding of this phenomenon. This type of problems for semilinear equations, with the so-called nearly critical growth, were first studied in the unit ball of $\mathbb{R}^3$  by Atkinson and Peletier  in \cite{FL87} and then extended to  spherical
domains by  Brezis and Peletier  in \cite{HL89} and non-spherical domains by Han in \cite{Han91}. Indeed, they proved the solution $u_\epsilon$  blows up in the sense that $\|u_\epsilon\|_{L^\infty(\mathbb{R}^N)}\sim   \epsilon^{-\frac{1}{2}}$ as
   $\epsilon\rightarrow 0^+$.
More recently, their results were extended to
nonlocal problems   in bounded domains in \cite{WSK14}. Precisely,
 the authors in \cite{WSK14} study the following nonlocal problem
\begin{equation}\label{bounddomain}
\left\{\begin{array}{cccc}
 \mathcal{A}_su=u^{2_s^*-1-\epsilon} &\ \ &\text{ in }& \Omega,\\
u> 0&\ \ &\text{ in }&  \Omega,\\
u=0 &\ \ &\text{ on }&  \partial\Omega,
 \end{array}\right.
\end{equation}
where $\mathcal{A}_s$ denotes the fractional Laplace operator $(-\Delta)^s$  in $\Omega$ defined in terms of the spectrum of the Laplacian subject to Dirichlet boundary conditions. They proved that if $u_\epsilon$ is a solution to (\ref{bounddomain}) satisfying 
\begin{equation*}
\lim_{\epsilon\rightarrow 0}\frac{\displaystyle{\int_\Omega | \mathcal{A}_s^{\frac{1}{2}} u_\epsilon|^2dx}}{\|u_\epsilon\|^2_{2_s^*}}=S
\end{equation*} 
where $S$ is the best Sobolev constant in the embedding $H^s\hookrightarrow L^{s^*}$, then 
\begin{equation*}
\lim_{\epsilon\rightarrow 0} \epsilon \|u_\epsilon\|_{L^\infty(\Omega)}^2=b_{n,s}|\tau(x_0)|\, ,
\end{equation*}
where $b_{n,s}$ is a normalizing constant and $x_0\in \Omega$ is a  critical point of the  Robin function  $\tau(x)$. Besides, in \cite{SM16} the authors study the asymptotic behavior of solutions to the nonlocal nonlinear problem
\begin{equation}\label{bounddomain2}
\left\{\begin{array}{cccc}
(-\Delta_p)^su=|u|^{p_s^*-2-\epsilon}u &\ \ &\text{ in }& \Omega,\\
u=0 &\ \ &\text{ in }&   \mathbb{R}^N\setminus \Omega,
 \end{array}\right.
\end{equation}
where $p_s^*=\frac{Np}{N-sp}$, $N>ps$, $p>1$. They prove that ground state solutions
concentrate at a single point in $\overline{\Omega}$ and analyze the asymptotic behavior for
sequences of solutions at higher energy levels as $\epsilon\rightarrow 0$. In particular, in the semi-linear case $p=2$, they prove
that for smooth domains the concentration point   cannot lie on the boundary, and
identify its location in the case of annular domains.
  Regarding the nonlocal problem (\ref{bounddomain2}) for $p=2$,  we also refer to  \cite{PP14} for a profile decomposition approach and to \cite{PP15} for  $\Gamma$-convergence methods.

\noindent The purpose of this paper is twofold: on one side, under suitable conditions on $V(x)$,  we give a complete description of the blow up behavior of the  ground states of (\ref{maineq0}); on the other side, we
identify the location of the concentration points and then we establish local uniqueness of ground states.

\noindent Before stating our main results, let us make a few assumptions on $V(x)$. Throughout this paper, we assume  that      $V(x)$ satisfies the following conditions:

\begin{itemize}
\item[$(V_1)$] $V\in C^2$, $ 0<V_0\leq V(x)\leq V_\infty:=\sup\limits_{x\in \mathbb{R}^N}V(x)=\liminf\limits_{|x|\rightarrow +\infty}V(x)<+\infty$;
\item[$(V_2)$] The function $ x\cdot \nabla V(x)$   stays bounded in $\mathbb{R}^N.$
\end{itemize}

\noindent We consider here the fractional Sobolev space
$$H^s_{V}(\mathbb{R}^N):=\left\{u\in L^2(\mathbb{R}^N, V(x)\,dx):   [u]_s^2:=\int_{\mathbb{R}^N \times\mathbb{R}^N} \frac{|u(x)-u(y)|^2}{|x-y|^{N+2s}}dxdy<\infty  \right\}$$
endowed with the norm $$\|u\|_{s,V}:=\left( [u]_s^2+\int_{\mathbb{R}^N}V(x)u^2dx\right)^{\frac{1}{2}}.$$
Notice that  under $(V_1)$, $H^s(\mathbb{R}^N)$ which corresponds to the choice $V\equiv 1$ and $H^s_{V(x)}(\mathbb{R}^N)$ turn out to be equivalent in terms of norms as well as of elements.
Denote  by $D^s(\mathbb{R}^N)$ the closure of $C_0^\infty(\mathbb{R}^N)$ with respect to the norm $[u]_s$.  As usual $ \|\cdot\|_p$ denotes  $\|\cdot\|_{L^p(\mathbb{R}^N)}$   for $1\leq p\leq \infty$.

\noindent Let  \begin{equation}\label{fun0}
 S_{2_s^*-\epsilon}:=\inf_{ u\in H^s(\mathbb{R}^N) \setminus \{0\} }\frac{\|u\|_{s}^2}{\|u\|_{2_s^*-\epsilon}^2}=\inf_{ u\in H^s(\mathbb{R}^N)  }\{\|u\|_{s}^2:\|u\|_{2_s^*-\epsilon}^{ 2_s^*-\epsilon}=1\}.
\end{equation}
By Lion’s concentration compactness, minimizers
for ${S}_{2_s^*-\epsilon}$ always exist    and one may assume they do not change sign, \cite{PEAJ12, DPV13},  Moreover, they are  radially symmetric, see  \cite{FRLE}.  Here, we will consider only positive minimizers.

\noindent Recall also the Sobolev constant
 \begin{equation}\label{bestconst}S:=\inf_{ u\in D^{s}(\mathbb{R}^N) \setminus \{0\}} \frac {[u]_{s}^2}{ \|u\|_{2_s^*}^2}.\end{equation}

%In this paper, we only focus on the case $N>4s$ since under this assumption, the $L^2$ norm of the ground state for the limit equation  is available.

\noindent For each fixed $\epsilon \in (0,2_s^*-2)$, let $w_\epsilon$ be a positive minimizer for

 \begin{equation}\label{fun00}
 S_{2_s^*-\epsilon}^V=\inf_{ u\in H_{V}^s(\mathbb{R}^N)  }\{\|u\|_{s,V}^2:\|u\|_{2_s^*-\epsilon}^{2_s^*-\epsilon}=1\},
\end{equation}
or  equivalently,  \begin{equation}\label{fun000}
 S_{2_s^*-\epsilon}^V=\inf_{ u\in H_{V}^s(\mathbb{R}^N) \setminus \{0\} }I_\epsilon(u),
\end{equation}
where $$I_\epsilon(u)=\frac {\|u\|_{s,V}^2}{ \|u\|_{2_s^*-\epsilon }^2}.$$

\medskip

\noindent Our main results are the following:

\begin{theorem} \label{th2} Assume $(V_1),$ $(V_2)$,  $N>4s$ and that $u_\epsilon$ is a ground state of (\ref{maineq0}), namely satisfying (\ref{fun000}), which has a maximum point $x_\epsilon$ such that $x_\epsilon\rightarrow x_0$ as $\epsilon\rightarrow 0^+$. Then,
\begin{itemize}
\item[$(1)$] $\lim\limits_{\epsilon\rightarrow 0^+}S_{2_s^*-\epsilon}^V= S;$
 \item[$(2)$]  $$\lim_{\epsilon\rightarrow 0^+} \epsilon \|u_\epsilon\|_{\infty }^{\frac{4s}{N-2s}}=A_{N,s}\left[V(x_0)+\frac{1}{2s}x_0\cdot \nabla V(x_0)\right]$$
\end{itemize}
where $$A_{N,s}:= \frac{2^{2(N+1)}N^2\pi^{\frac{N}{2}} \Gamma\left(\frac{N-4s}{2}\right)}{(N-2s)^2\Gamma(N-2s)} S^{-\frac{N}{2s}} .$$
\end{theorem}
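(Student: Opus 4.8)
The plan is to analyze the rescaled ground states near their maximum point and extract the blow-up rate by comparing the energy expansion with the known asymptotics of the fractional Sobolev constant. First I would set $M_\epsilon := \|u_\epsilon\|_\infty$ and introduce the rescaling $v_\epsilon(x) := M_\epsilon^{-1} u_\epsilon(x_\epsilon + M_\epsilon^{-\frac{2s}{N-2s}} x)$, so that $\|v_\epsilon\|_\infty = v_\epsilon(0) = 1$. A direct computation shows $v_\epsilon$ satisfies
\begin{equation*}
(-\Delta)^s v_\epsilon + M_\epsilon^{-\frac{4s}{N-2s}} V\!\big(x_\epsilon + M_\epsilon^{-\frac{2s}{N-2s}} x\big) v_\epsilon = M_\epsilon^{-\epsilon \frac{2s}{N-2s}} v_\epsilon^{2_s^*-1-\epsilon}.
\end{equation*}
From part $(1)$ together with the characterization of minimizers, $S_{2_s^*-\epsilon}^V \to S$, and a concentration-compactness / profile argument shows $M_\epsilon \to \infty$ (otherwise $u_\epsilon$ would remain bounded and, by elliptic estimates, converge to a nontrivial solution of the limiting equation with $\epsilon = 0$, contradicting the Pohozaev identity of Theorem~\ref{Pohozae} under $(V_1)$). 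Hence the potential term in the rescaled equation vanishes in the limit and $v_\epsilon \to U$ in $C^0_{loc}$, where $U$ is the (unique, up to translation and dilation) positive solution of $(-\Delta)^s U = U^{2_s^*-1}$ with $U(0)=1$, normalized so that $[U]_s^2 = \|U\|_{2_s^*}^{2_s^*} = S^{N/2s}$.

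The heart of the argument is a sharp two-term energy expansion. I would compute $S_{2_s^*-\epsilon}^V$ along $u_\epsilon$ and, separately, test the functional $I_\epsilon$ with suitably truncated and rescaled copies $U_{\mu,\xi}$ of $U$ to get a matching upper bound. Writing the numerator $\|u_\epsilon\|_{s,V}^2 = [u_\epsilon]_s^2 + \int V u_\epsilon^2$ and expanding $\|u_\epsilon\|_{2_s^*-\epsilon}$ using $2_s^* - \epsilon = 2_s^*(1 - \epsilon/2_s^*)$, the logarithmic correction $\|u_\epsilon\|_{2_s^*-\epsilon} = \|u_\epsilon\|_{2_s^*}(1 + O(\epsilon \log M_\epsilon))$ produces, after rescaling, a term of order $\epsilon \log M_\epsilon$, while the potential contributes a term of order $M_\epsilon^{-\frac{4s}{N-2s}}\big[V(x_0) + \frac{1}{2s}x_0\cdot\nabla V(x_0)\big]$ — here the combination $V + \frac{1}{2s}x\cdot\nabla V$ emerges exactly as in the Pohozaev identity, because the relevant integral $\int V(x_\epsilon + M_\epsilon^{-\frac{2s}{N-2s}}x) U^2\,dx$ must be expanded to include the first-order Taylor term and the $x\cdot\nabla V$ piece comes from the scaling weight. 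Balancing these two corrections against the $O(\epsilon)$ gap $S_{2_s^*-\epsilon}^V - S$ forces $\epsilon \log M_\epsilon$ and $M_\epsilon^{-\frac{4s}{N-2s}}$ to be of the same order, which after taking logarithms yields $\epsilon \sim c\, M_\epsilon^{-\frac{4s}{N-2s}}$ and pins down the constant $A_{N,s}$.

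To compute $A_{N,s}$ explicitly I would use the known closed form of the extremal $U$ for the fractional Sobolev inequality — namely $U(x) = \kappa\,(1+|x|^2)^{-\frac{N-2s}{2}}$ up to scaling — and the value of the integral $\int_{\mathbb{R}^N} U^2\,dx$, which converges precisely because $N > 4s$ and equals (up to the normalization constant) a ratio of Gamma functions; this is where the factor $\Gamma\big(\frac{N-4s}{2}\big)/\Gamma(N-2s)$ and the powers of $\pi$, $2$ and $N$ in $A_{N,s}$ come from, together with the $S^{-N/2s}$ arising from the normalization of $U$. The condition $N > 4s$ is essential here: it guarantees $U \in L^2(\mathbb{R}^N)$ so that the potential term in the expansion is finite and genuinely of the stated order.

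The main obstacle I anticipate is making the two-term expansion rigorous with uniform error control — specifically, showing that the truncation/cut-off needed to bring the noncompact extremal $U$ into $H^s_V(\mathbb{R}^N)$ introduces only errors of lower order than both $\epsilon \log M_\epsilon$ and $M_\epsilon^{-\frac{4s}{N-2s}}$, and controlling the nonlocal tail interactions (the fractional Laplacian is nonlocal, so cut-off errors are more delicate than in the classical case). A secondary difficulty is ruling out that $x_\epsilon \to \infty$ and establishing that the limiting profile is exactly $U$ with the right normalization; this needs the nondegeneracy and uniqueness of $U$ (available in the fractional setting) and a careful use of $(V_1)$ to ensure the mass does not escape to infinity where $V$ attains its supremum.
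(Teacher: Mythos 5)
Your setup (rescaling about the maximum point, identification of the limit profile $U$, the role of $N>4s$ in making $\int_{\mathbb{R}^N}U^2\,dx$ finite, and the test-function computation behind part (1)) is consistent with the paper, but the core mechanism you propose for part (2) does not work. The combination $V(x_0)+\frac{1}{2s}\,x_0\cdot\nabla V(x_0)$ cannot be extracted from a Taylor expansion of $\int_{\mathbb{R}^N}V(x_\epsilon+\mu_\epsilon x)\,v_\epsilon^2\,dx$ in the energy: the first-order Taylor term there is $\mu_\epsilon\nabla V(x_0)\cdot\int x\,U^2dx$, which is of lower order and bears no relation to $x_0\cdot\nabla V(x_0)$, and no ``scaling weight'' produces the dilation term. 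That combination is the signature of a Pohozaev identity, and indeed the paper's proof (Proposition \ref{asy1}) obtains part (2) by applying Theorem \ref{Pohozae} directly to $u_\epsilon$, which gives $\bigl(\frac{1}{2_s^*-\epsilon}-\frac{1}{2_s^*}\bigr)\int u_\epsilon^{2_s^*-\epsilon}dx=\int\bigl[V(x)+\frac{1}{2s}x\cdot\nabla V(x)\bigr]u_\epsilon^2dx$; rescaling the right-hand side and passing to the limit then yields $\epsilon\sim A_{N,s}[\,\cdot\,]\mu_\epsilon^{2s}$ with the exact constant. The paper's own energy expansions (Theorems \ref{th5} and \ref{th6}, used later for localization) contain only $V(x_0)\int U^2$ in the potential term, which confirms that the expansion route by itself does not see the $x\cdot\nabla V$ contribution: there the Pohozaev-type constant $\widetilde C_{N,s}$ is imported from Proposition \ref{asy1}, not derived from the expansion.

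Two further problems. First, your balancing step is internally inconsistent: if a genuine term of order $\epsilon\log M_\epsilon$ were matched against one of order $M_\epsilon^{-4s/(N-2s)}$, you would conclude $\epsilon\sim M_\epsilon^{-4s/(N-2s)}/\log M_\epsilon$, not $\epsilon\sim c\,M_\epsilon^{-4s/(N-2s)}$, and taking logarithms does not repair this; in fact $\mu_\epsilon^{\epsilon}\to1$ (Lemma \ref{conver2}), so such logarithmic corrections are subleading, and in any case a comparison of an upper test-function bound with a lower expansion along $u_\epsilon$ can at best bracket the rate, not identify the precise constant $A_{N,s}$. Second, to pass to the limit in $\int V(x_\epsilon+\mu_\epsilon x)\bigl[\cdots\bigr]v_\epsilon^2dx$ one needs a uniform pointwise domination such as $v_\epsilon\le CU$ on all of $\mathbb{R}^N$ (the paper's Lemma \ref{comp}, proved via the Kelvin transform together with the Moser-type local bound of Theorem \ref{TH5} and the decay Lemmas \ref{DEC0}--\ref{DEC}); convergence $v_\epsilon\to U$ in $C^0_{loc}$ alone does not control the tails, and this is precisely where $N>4s$ and hypothesis $(V_2)$ enter. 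Your proposal acknowledges cut-off errors in the upper bound but not this tail-domination issue, which is essential for the dominated convergence step on which the limit in part (2) rests.
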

\noindent In particular we have 
\begin{corollary} \label{th1} Assume $N>4s$ and that $u_\epsilon$ is  a minimizer for (\ref{fun0}). Then, we have:
\begin{itemize}
\item[$(1)$] $\lim\limits_{\epsilon\rightarrow 0^+} S_{2_s^*-\epsilon}= S;$
\item[$(2)$]  $$\lim_{\epsilon\rightarrow 0^+} \epsilon \|u_\epsilon\|_{\infty }^{\frac{4s}{N-2s}}=A_{N,s}\ .$$
\end{itemize}
\end{corollary}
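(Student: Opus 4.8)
The plan is to derive Corollary~\ref{th1} as the special case $V(x)\equiv 1$ of Theorem~\ref{th2}, which forces $x_0\cdot\nabla V(x_0)=0$ and $V(x_0)=1$, so that the bracketed factor $V(x_0)+\frac{1}{2s}x_0\cdot\nabla V(x_0)$ collapses to $1$. The only point that needs care is that the hypotheses of Theorem~\ref{th2} are genuinely met by an arbitrary positive minimizer $u_\epsilon$ for \eqref{fun0}: we must check that such a minimizer (after suitable normalization and scaling to satisfy \eqref{fun000}) indeed possesses a maximum point $x_\epsilon$ and that, up to translation, $x_\epsilon$ converges. First I would recall that when $V\equiv 1$ the constrained minimizers for $S_{2_s^*-\epsilon}$ are, by the cited results (\cite{FRLE}, \cite{DPV13}), radially symmetric and decreasing after a translation; hence each $u_\epsilon$ attains its $L^\infty$ norm at a well-defined point, and by radial symmetry we may translate so that $x_\epsilon=0$ for every $\epsilon$, whence trivially $x_\epsilon\to x_0:=0$.

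Next I would verify the normalization compatibility. A minimizer $u$ for \eqref{fun0} (in the constrained form $\|u\|_{2_s^*-\epsilon}^{2_s^*-\epsilon}=1$) satisfies, after the standard Lagrange-multiplier computation and a scaling $u\mapsto t u$, the Euler--Lagrange equation \eqref{maineq0} with $V\equiv 1$; conversely the ground state characterization \eqref{fun000}--\eqref{fun000} with $V\equiv 1$ is exactly \eqref{fun0}, so the notion of ``ground state of \eqref{maineq0}'' and ``minimizer for \eqref{fun0}'' coincide for $V\equiv 1$. Thus $u_\epsilon$ meets all hypotheses of Theorem~\ref{th2}. Applying part $(1)$ of Theorem~\ref{th2} gives $\lim_{\epsilon\to0^+}S_{2_s^*-\epsilon}^V=S$, and since $S_{2_s^*-\epsilon}^V=S_{2_s^*-\epsilon}$ when $V\equiv 1$, this is precisely part $(1)$ of the corollary.

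Finally, for part $(2)$, I would invoke part $(2)$ of Theorem~\ref{th2} with $V\equiv 1$: then $\nabla V\equiv 0$, so $x_0\cdot\nabla V(x_0)=0$, and $V(x_0)=1$, so the right-hand side is $A_{N,s}\cdot\bigl[1+0\bigr]=A_{N,s}$, yielding
\begin{equation*}
\lim_{\epsilon\to 0^+}\epsilon\,\|u_\epsilon\|_\infty^{\frac{4s}{N-2s}}=A_{N,s}.
\end{equation*}
The one genuine obstacle is the bookkeeping around which object plays the role of ``$u_\epsilon$'': one must make sure that the scaling that turns a minimizer of the quotient \eqref{fun0} into an actual solution of \eqref{maineq0} is the same scaling implicitly used in Theorem~\ref{th2}, since the asymptotic constant $A_{N,s}$ is sensitive to it; this is a routine but necessary consistency check, and beyond it the corollary is an immediate specialization with no further analysis required.
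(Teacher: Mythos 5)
Your proposal is correct and follows essentially the same route as the paper, which disposes of the corollary in one line (``Corollary \ref{th1} is a particular case of Theorem \ref{th2}'') by setting $V\equiv 1$, so that the bracket $V(x_0)+\frac{1}{2s}x_0\cdot\nabla V(x_0)$ equals $1$. Your additional checks --- that radial symmetry of minimizers for $V\equiv 1$ lets one place the maximum point at the origin so the hypothesis $x_\epsilon\to x_0$ of Theorem \ref{th2} holds, and that the scaling relating a constrained minimizer of (\ref{fun0}) to an actual solution of (\ref{maineq0}) must match the normalization used in Proposition \ref{asy1} --- are exactly the details the paper leaves implicit, and flagging the normalization sensitivity of $A_{N,s}$ is a legitimate point of care rather than a deviation.
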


\noindent Notice that in Theorem \ref{th2}, we assume that the maximum point $x_\epsilon$ does converge. However, under conditions $(V_1)$ and $(V_2)$,  one of the main difficulties is that $x_\epsilon$ may actually escape  to infinity as $\epsilon\rightarrow 0^+$.
  In what follows, we prove  that if $N>6s$, then the maximum point $x_\epsilon$ must be bounded, and
therefore converging, up to a subsequence,  to  a global minimum point of $V(x)$ provided $\inf_{x\in \mathbb{R}^N} V(x)<V_\infty$. More precisely, we have the following

\begin{theorem}  \label{th3} Assume $(V_1),$ $(V_2)$ with  $\inf_{x\in \mathbb{R}^N} V(x)<V_\infty$,   $N>6s$ and that $u_\epsilon$ is a ground state of (\ref{maineq0})  (in the sense of (\ref{fun000})) which has a maximum point $x_\epsilon$. Then, there exists a subsequence   $\{x_{\epsilon_j}\}$ of $\{x_{\epsilon}\}$ such that:
\begin{itemize}
\item[$(1)$] $\lim\limits_{ j\rightarrow \infty}x_{\epsilon_j}= x_0$,  where $x_0$ is a global minimum point of $V(x)$;
\item[$(2)$] $\lim\limits_{ j\rightarrow \infty}S_{2_s^*-\epsilon_j}^V= S;$
\item[$(3)$]  $$\lim_{j\rightarrow \infty} \epsilon_j \|u_{\epsilon_j}\|_{\infty}^{\frac{4s}{N-2s}}=A_{N,s} V(x_0).$$
\end{itemize}
\end{theorem}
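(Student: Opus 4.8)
The plan is to bootstrap from Theorem \ref{th2}: once we know that, along a subsequence, $x_{\epsilon_j}$ converges to some finite point $x_0$, part (1) of Theorem \ref{th2} gives $S_{2_s^*-\epsilon_j}^V\to S$ and part (2) gives $\epsilon_j\|u_{\epsilon_j}\|_\infty^{4s/(N-2s)}\to A_{N,s}[V(x_0)+\tfrac{1}{2s}x_0\cdot\nabla V(x_0)]$, so the whole content of Theorem \ref{th3} reduces to two facts: that $\{x_{\epsilon_j}\}$ is bounded (hence has a convergent subsequence), and that its limit $x_0$ is a global minimum of $V$, at which $x_0\cdot\nabla V(x_0)=0$. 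The sharpened dimensional hypothesis $N>6s$ should enter exactly in the boundedness argument — it is the extra room needed to beat the potential term in the energy expansion.

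First I would establish the energy expansion near the concentration scale. Writing $\mu_\epsilon:=\|u_\epsilon\|_\infty^{-2s/(N-2s)}\to 0$ and rescaling $u_\epsilon$ around $x_\epsilon$ at scale $\mu_\epsilon$, one shows the rescaled functions converge (in $D^s(\mathbb{R}^N)$ and locally uniformly) to the standard Aubin--Talenti bubble $U$ realizing $S$. Testing $I_\epsilon$ against a suitably cut-off and rescaled bubble centered at a point $y$ and optimizing, one obtains an expansion of the form
\begin{equation*}
S_{2_s^*-\epsilon}^V \le S + c_1\,\epsilon\,\log(1/\epsilon)\cdot(\text{const}) + c_2\,\mu_\epsilon^{2s}\,V(y) + o(\cdots),
\end{equation*}
with positive constants $c_1,c_2$ depending only on $N,s$; comparing $y=x_\epsilon$ with $y$ a minimizer of $V$ forces, to leading order, $V(x_\epsilon)\le \inf_{\mathbb{R}^N}V + o(1)$, so $V(x_\epsilon)\to\inf_{\mathbb{R}^N}V<V_\infty$. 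This is what prevents escape to infinity: since $V(x)\to V_\infty>\inf V$ along any sequence tending to infinity (by $(V_1)$, $\liminf_{|x|\to\infty}V=V_\infty$), the points $x_\epsilon$ must remain in a bounded set. Up to a subsequence $x_{\epsilon_j}\to x_0$ with $V(x_0)=\inf_{\mathbb{R}^N}V$, which is (1). Then $x_0$ is an interior global minimum, so $\nabla V(x_0)=0$, hence $x_0\cdot\nabla V(x_0)=0$; plugging this into Theorem \ref{th2}(2) yields (3), and Theorem \ref{th2}(1) yields (2).

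The delicate point — and the reason $N>6s$ rather than merely $N>4s$ is required — is controlling the competition between the two perturbative terms in the energy expansion: the gain from concentrating at a low-potential point is of order $\mu_\epsilon^{2s}$, while the cost coming from the slightly subcritical exponent is of order $\epsilon$ (with a possible logarithmic factor from the tail of $U$, whose sharp behavior requires $N-4s>0$ and whose next-order term requires $N-6s>0$ to keep the relevant integral $\int U^2\,|x|^{\cdots}$ or $\int U^{2_s^*}\log U$ convergent and correctly sized). One must show $\mu_\epsilon^{2s}\sim \epsilon$ up to constants — this is essentially Theorem \ref{th2}(2) again — so that the two terms are genuinely comparable and the comparison argument is not vacuous; carrying the expansion to sufficient precision, and verifying that the error terms are $o(\epsilon)$ uniformly in the center $y$ over a bounded region, is the main technical obstacle. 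Once that uniformity is in hand, the rest is a soft compactness-and-comparison argument as sketched above.
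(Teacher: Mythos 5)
Your overall strategy (upper bound with a bubble centered at a minimum of $V$, a matching expansion tied to the true concentration point, comparison to force $V(x_\epsilon)\to\inf V$ and hence boundedness, then invoke Theorem \ref{th2} with $\nabla V(x_0)=0$) is the same skeleton as the paper's, but there is a genuine gap at the decisive step. You only ever produce \emph{upper} bounds for $S^V_{2_s^*-\epsilon}$: testing $I_\epsilon$ with a bubble centered at $y=x_\epsilon$ and with $y$ a minimizer of $V$ gives two inequalities of the form $S^V_{2_s^*-\epsilon}\le\cdots$, and from two upper bounds nothing follows about the location of $x_\epsilon$. What is needed is a matching \emph{exact} second-order expansion of $S^V_{2_s^*-\epsilon_j}$ computed from the actual ground state, in which the coefficient of $\mu_j^{2s}$ carries $V$ evaluated at the concentration point (or $V_\infty$ when $x_{\epsilon_j}\to\infty$). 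This is the content of the paper's Theorem \ref{th6} (compared against the upper bound of Theorem \ref{th5} in Theorem \ref{th7}), and it is the technical heart of the proof: one writes $v_j=U+\mu_j^{2s}w_j$, and to control $w_j$ one needs the nondegeneracy of the bubble (Lemma \ref{JMY13}) together with the a priori estimate for the linearized operator on the complement of its kernel (Lemmas \ref{Lm21}--\ref{Lm22}). Your proposal contains no substitute for this linearization step, so the ``soft compactness-and-comparison argument'' you invoke at the end is vacuous as stated. In the unbounded case one must also justify that the expansion goes through with $V_\infty$, which requires decay and tightness estimates recentered at $x_\epsilon$ (Lemmas \ref{conver3+} and \ref{DEC} in the paper), another ingredient your sketch passes over.

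A secondary inaccuracy: you locate the need for $N>6s$ in the convergence of integrals such as $\int U^2$ or $\int U^{2_s^*}\ln U$, but those only require $N>4s$ (respectively less). In the paper $N>6s$ enters through the linear theory behind the exact expansion: the exponents in Lemma \ref{Lm21} must satisfy $\frac{N}{N-2s}<q<\frac{N}{4s}$ with $\frac1q-\frac{2s}{N}=\frac1r$, which is possible only when $N>6s$. So the sharpened dimensional restriction is tied precisely to the step your proposal omits, which reinforces that the missing expansion is not a routine detail but the essential new work beyond Theorem \ref{th2}.
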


\noindent What stated in Theorem \ref{th3} opens a natural question: \textit{is there more than one blow-up ground state  sequence such that the maxima concentrate at  the same point?}

\noindent We do not have a full answer, however let us consider a special case. Assume $V(x)$ is radial and that there exist two radial  ground state sequences $u_{\epsilon_j}^1$ and  $u_{\epsilon_j}^2$     of (\ref{maineq0}) such that $\|u_j^i\|_{\infty}=u_j^i(0),$  $i=1,2$.  Set $\mu_j^i:=\|u_j^i\|_{\infty}^{-\frac{2_s^*-2-\epsilon_j}{2s}} $, $i=1,2$.
We have the following local uniqueness result.

\begin{theorem}  \label{th4} Assume $(V_1),$ $(V_2)$, that $V(x)=V(|x|)$ is radial,   $N>4s$ and there exist two radial  ground state sequences   $u_{\epsilon_j}^1$ and  $u_{\epsilon_j}^2$    of (\ref{maineq0})  satisfying $\|u_j^i\|_{\infty}=u_j^i(0),$  $i=1,2$,  $(1)$, $(2)$ and $(3)$ of Theorem \ref{th3}. Then, there exists $\epsilon_0>0$ such that for any $\epsilon_j\in (0,\epsilon_0)$, we have $u_{\epsilon_j}^1=u_{\epsilon_j}^2$, provided $\mu_j^1=\mu_j^2$. More precisely, up to rescaling, we have the following local uniqueness result 
$$u_{\epsilon_j}^2(x)=\left(\frac{\mu_j^1}{\mu_j^2}\right)^{\frac{2s}{2_s^*-2-\epsilon_j}}u_{\epsilon_j}^1\left(\frac{\mu_j^1}{\mu_j^2} x\right)$$
or equivalently
$$u_{\epsilon_j}^2(x)=\frac{\|u_{\epsilon_j}^2\|_{\infty}}{\|u_{\epsilon_j}^1\|_{\infty}}u_{\epsilon_j}^1
\left[\left(\frac{\|u_{\epsilon_j}^2\|_{\infty}}{\|u_{\epsilon_j}^1\|_{\infty}}\right)^{\frac{2_s^*-2-\epsilon_j}{2s}} x\right].$$
\end{theorem}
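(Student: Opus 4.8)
\textbf{Proof proposal for Theorem \ref{th4}.}

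The plan is to argue by contradiction and use a Pohozaev-type identity together with a blow-up (rescaling) analysis, following the classical local-uniqueness strategy (Cao--Li--Luo and related works), now in the fractional and nearly-critical setting. Suppose the conclusion fails along a subsequence. After the rescaling $\tilde u_j^i(x):=(\mu_j^i)^{\frac{2s}{2_s^*-2-\epsilon_j}}u_j^i(\mu_j^i x)$, both rescaled sequences are normalized so that $\|\tilde u_j^i\|_\infty=\tilde u_j^i(0)=1$, and from parts $(1)$--$(3)$ of Theorem \ref{th3} (which we are assuming) together with part $(1)$ of Theorem \ref{th2} they converge, in $C^2_{loc}$ and in $H^s$, to the unique positive radial ground state bubble $U$ of $(-\Delta)^sU=U^{2_s^*-1}$ normalized by $U(0)=1$ (the Talenti-type function), by the uniqueness result for the fractional critical equation. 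Since we may assume $\mu_j^1=\mu_j^2=:\mu_j$ (the scaling in the statement reduces to this case), the difference $\xi_j:=\dfrac{\tilde u_j^1-\tilde u_j^2}{\|\tilde u_j^1-\tilde u_j^2\|_{L^\infty}}$ is well defined with $\|\xi_j\|_\infty=1$, and it solves a linearized equation of the form
\begin{equation*}
(-\Delta)^s\xi_j+\mu_j^{2s}V(\mu_j x)\xi_j=c_j(x)\,\xi_j,
\end{equation*}
where $c_j(x)\to (2_s^*-1)U^{2_s^*-2}$ locally uniformly. The first step, then, is to set up this linearization carefully, keeping track of the $\epsilon_j$-dependent exponent, and to record uniform $L^\infty$ and decay bounds for $\tilde u_j^i$ and hence for $\xi_j$ (here $N>4s$ enters, to control the relevant integrals, exactly as in Theorem \ref{th2}).

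The second step is to pass to the limit: $\xi_j\to\xi$ in $C^2_{loc}$, where $\xi$ is a bounded solution of the limiting linearized equation $(-\Delta)^s\xi=(2_s^*-1)U^{2_s^*-2}\xi$ on $\R^N$. By the nondegeneracy of the bubble $U$ (the kernel of the linearization is spanned by $\partial_{x_1}U,\dots,\partial_{x_N}U$ and the dilation generator $x\cdot\nabla U+\frac{N-2s}{2}U$; for bounded solutions in the radial class only the dilation direction survives), one gets $\xi=\alpha\bigl(x\cdot\nabla U+\tfrac{N-2s}{2}U\bigr)$ for some constant $\alpha$. Because both $\tilde u_j^i$ are normalized at the origin with value $1$ and with $0$ as maximum point, one shows $\xi(0)=0$ and $\nabla\xi(0)=0$, which forces $\alpha=0$, hence $\xi\equiv 0$. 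So the normalized differences $\xi_j$ converge to zero locally uniformly; combined with the decay estimates this should give $\xi_j\to 0$ in $L^\infty(\R^N)$, contradicting $\|\xi_j\|_\infty=1$ — unless the $L^\infty$ norm of $\xi_j$ is attained on a set escaping to infinity. Ruling that out is where one must use a Pohozaev identity on large balls (or the full-space Pohozaev identity of Theorem \ref{Pohozae}, suitably localized) applied to $\tilde u_j^1$, $\tilde u_j^2$ and to their difference: the Pohozaev balance involving $V(\mu_j x)+\frac{1}{2s}\mu_j x\cdot\nabla V(\mu_j x)$ is precisely what pins down the rate in Theorem \ref{th2}, and here, subtracting the two identities, it yields a quantitative estimate on $\|\xi_j\|_\infty$ in terms of quantities that tend to zero.

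Concretely, the third step is: multiply the equation for $\xi_j$ by an Emden--Fowler/Pohozaev multiplier ($x\cdot\nabla\tilde u_j^i$, or the extension-problem analogue of Caffarelli--Silvestre), integrate, and subtract the resulting identities for $i=1,2$. The leading terms cancel by the $C^2_{loc}$ convergence to the same $U$, the boundary terms at infinity are controlled by the decay estimates, and what remains is an identity of the schematic form $c\,\alpha + o(1)=o(1)\cdot\|\xi_j\|_\infty$ forcing $\|\xi_j\|_\infty\to 0$, the desired contradiction. Once $\tilde u_j^1\equiv\tilde u_j^2$ for $j$ large, undoing the rescaling gives $u_{\epsilon_j}^1=u_{\epsilon_j}^2$ when $\mu_j^1=\mu_j^2$, and the displayed rescaling formulas follow from the definition of $\mu_j^i$ and $\|u_j^i\|_\infty=u_j^i(0)$.

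I expect the main obstacle to be the boundary/infinity terms in the Pohozaev computation: because we work on all of $\R^N$ with the nonlocal operator, the Pohozaev identity carries nonlocal tail contributions, and one must show these are lower order using the uniform decay of $\tilde u_j^i$ (of order $|x|^{-(N-2s)}$) — this is exactly the place where the restriction $N>4s$ is essential, since it guarantees integrability of $U^{2_s^*-2}\cdot(x\cdot\nabla U)\cdot U$ and of the nonlocal remainder, just as it is needed for the constant $A_{N,s}$ (which contains $\Gamma(\tfrac{N-4s}{2})$) to make sense in Theorem \ref{th2}. A secondary technical point is handling the $\epsilon_j$-dependence of the exponent $2_s^*-1-\epsilon_j$ uniformly in the linearization and in the Pohozaev remainder, which should be manageable since $\epsilon_j\to 0$ and the bound $N>4s$ gives room to spare.
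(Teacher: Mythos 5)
Your overall architecture is the same as the paper's: argue by contradiction, rescale each solution by its own $\mu_j^i$ so that $v_j^i(0)=\|v_j^i\|_\infty=1$, form the normalized difference $\psi_j=(v_j^1-v_j^2)/\|v_j^1-v_j^2\|_\infty$, linearize, pass to a limit $\psi$ solving $(-\Delta)^s\psi=(2_s^*-1)U^{2_s^*-2}\psi$, and invoke nondegeneracy (Lemma \ref{JMY13}) plus radial symmetry to reduce $\psi$ to a multiple of the dilation mode. Where you genuinely differ is in how you kill that multiple: you use $\psi_j(0)=0$ and $\nabla\psi_j(0)=0$ (both rescaled solutions equal $1$ at their common maximum point $0$), and since the dilation mode does not vanish at the origin this forces the coefficient to be zero. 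The paper instead runs a Pohozaev computation, deriving $\int_{\mathbb{R}^N}\psi U\,dx<0$ while $\int_{\mathbb{R}^N}\psi U^{2_s^*-1}dx=0$ to reach the same conclusion. Your shortcut for this intermediate step is legitimate and in fact simpler.

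The genuine gap is in your final step. Once the kernel component vanishes you only know $\psi_j\to0$ locally uniformly; the contradiction with $\|\psi_j\|_\infty=1$ requires excluding that the points where $\psi_j$ attains its maximum escape to infinity. The mechanism you propose for this — a Pohozaev/Emden--Fowler multiplier identity applied to $\tilde u_j^1,\tilde u_j^2$ and subtracted, yielding "schematically $c\,\alpha+o(1)=o(1)\cdot\|\xi_j\|_\infty$, forcing $\|\xi_j\|_\infty\to0$" — cannot deliver this: $\|\xi_j\|_\infty=1$ by construction and $\alpha=0$, so such an identity is vacuous, and an integral identity gives no pointwise control of $\xi_j$ near infinity in any case (indeed "forcing $\|\xi_j\|_\infty\to0$" is incompatible with your own normalization). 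What is actually needed is a $j$-uniform pointwise decay bound for the normalized difference itself, not merely for $\tilde u_j^i$: the paper obtains the exclusion from the uniform comparison $v_j^i\le CU$ of Lemma \ref{comp} transferred to $\psi_j$, and even there one must be careful because dividing by $\|v_j^1-v_j^2\|_\infty$ (which is expected to tend to zero) can destroy uniformity of the constant; a clean route is to derive decay for $\psi_j$ directly from the linearized equation via the Bessel/Riesz kernel representation, using $\Phi_j\lesssim U^{2_s^*-2-\epsilon_j}\sim|x|^{-4s}$ and $\|\psi_j\|_\infty\le1$. Without some such uniform decay estimate for $\psi_j$, your argument does not close, whereas the rest of your outline (including undoing the rescaling to get the displayed identities) is consistent with the paper.
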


\begin{remark}
In Theorems \ref{th4} we assume that $\|u_j^i\|_{\infty}=u_j^i(0),$  $i=1,2$. Indeed, the results are still true if $\|u_j^i\|_{\infty}=u_j^i(x_j)$ for some $x_j\in \mathbb{R}^N$ and $\lim_{j\rightarrow \infty} x_j=x_0$.  Let $w_j^i(x)=u_j^i(x+x_j)$, then $w_j^i$ satisfies
$$ (-\Delta)^s w_j^i+V(x+x_j)w_j^i= (w_j^i)^{2_s^*-1-\epsilon_j}  \ \
       \text{in}\ \ \mathbb{R}^N.$$
Define
$$v_j^i(x)=\mu_j^{\frac{2s}{2_s^*-2-\epsilon_j}}w_j^i(\mu_j^ix).$$
Then $0<v_j^i(x)\leq 1$, $v_j^i(0)=1$, and satisfies
\begin{equation}\label{res1}
   \begin{split}
 (-\Delta)^s v_j^i+(\mu_j^i)^{2s}V(x_j+\mu_j^i x)v_j^i=(v_j^i)^{2_s^*-1-\epsilon_j} \ \ \text{in} \ \ \mathbb{R}^N.
 \end{split}
 \end{equation}
%(The proof is similar if we replace   $U$ defined in (\ref{att}) by $\widetilde{U}(x)=\left(1+\frac{|x-x_0|^2}{\lambda^2}\right)^{\frac{2s-N}{2}}$.)

\end{remark}

\subsection*{Overview} The asymptotic behavior of ground states to nonlocal problems has  attracted remarkable attention in recent years.  In \cite{MFEV15}, the authors studied    the singularly perturbed fractional Schr\"{o}dinger equation
\begin{equation}\label{Sc}
 \epsilon^{2s}(-\Delta)^s u+V(x)u= u^{p} \ \  \text{in}\ \ \mathbb{R}^N,    \end{equation}
where $1<p<2_s^*-1$.
They proved that concentration points turn out to be critical points for $V$. Moreover, they proved that if the potential $V$ is coercive and has a unique
global minimum, then ground states concentrate at that minimum point as $\epsilon\rightarrow 0$.
In \cite{JMY14}, by means of a Lyapunov-Schmidt reduction method, the authors proved the existence of various type of concentrating solutions, such as multiple spikes and clusters, such that each of the local maxima converge to a critical point of $V$ as $\epsilon\rightarrow 0$, see also \cite{ALMI16,doms14}. In \cite{BMU191},  the authors considered the nonlocal scalar field equation
\begin{equation}\label{SFE}
(-\Delta)^s u+\epsilon u= |u|^{p-2}u-|u|^{q-2}u \ \  \text{in}\ \ \mathbb{R}^N,    \end{equation}
where $2<p<q$. For $\epsilon$ small, they proved the existence and qualitative properties of positive solutions when $p$ is subcritical, supercritical or critical Sobolev exponent.  For the existence of positive solutions of nonlocal equations with a small parameter see also \cite{BM19,DMPV17}.

\noindent Loosely speaking, all the results mentioned above were concerned with the characterization of concentration of  ground states. The purpose of this paper is quite different as we focus on quantitative aspects of concentrating solutions . Let us emphasize that Theorem \ref{th3} can be seen as a nonlocal analog of the results in \cite{PANWANG92,WANG96}. In \cite{PANWANG92}, the authors studied
the behavior of the ground states of equation
\begin{equation}\label{SCH}
  -\Delta  u +K(x)u= u^{2^*-1-\epsilon} \ \ \text{in}\ \ \mathbb{R}^N.
   \end{equation}
Under some geometric assumptions on  $K(x)$, they proved the existence of ground states $u_\epsilon$. Moreover,    the maximum point $x_\epsilon$ of   $u_\varepsilon$ is bounded and $\|u\|_{L^\infty(\mathbb{R}^N)}\sim \epsilon^{-\frac{N-2}{4}}$ as $\epsilon\rightarrow 0^+$.
In \cite{WANG96}, the author further identified  the location of the blow-up point. In the present paper,  though conditions $(V_1)$ and $(V_2)$  guarantee the existence of the ground state solution $u_\epsilon$, it is not true in general that the maximum point $x_\epsilon$ of $u_\epsilon$ stays bounded as $\epsilon\rightarrow 0^+$ and this yields a major difficulty. 

\medskip

\noindent The paper is organized as follows: existence of minimizers, local boundedness estimates of solutions and a Pohozaev type identity are established in the preliminary Section \ref{pre}. In Section  \ref{S2}, we study the asymptotic behavior of ground states, including a uniform bound up to rescaling. Section \ref{S3} is devoted to identify the location of blow-up points, whence in Section \ref{S4} we prove the local uniqueness of ground states. 

\noindent Throughout this paper, $C$ will denote a positive constant which may vary from line to line.

\section{Preliminaries}\label{pre}

\noindent Here for the convenience of the reader we prove some auxiliary results. Consider first the following constrained minimization: 
\begin{equation}\label{fun}
 S_{2_s^*-\epsilon}^V:=\inf_{ u\in H_{V}^s(\mathbb{R}^N)  }\{\|u\|_{s,V}^2:\|u\|_{2_s^*-\epsilon}^{2_s^*-\epsilon}=1\}.
\end{equation}
In the special case $V(x)=1$,  minimizers
for $S_{2_s^*-\epsilon}$ always exist    and do not change sign, see e.g. \cite{PEAJ12, DPV13},  Moreover, they are  radially symmetric, see  \cite{FRLE}.

\begin{theorem} \label{ThA} Assume $(V_1)$ holds. Then,  $S_{2_s^*-\epsilon}^V$ is  achieved at some $w_\epsilon\in   H_{V}^s(\mathbb{R}^N).$
\end{theorem}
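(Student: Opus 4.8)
The plan is to run a by-now classical concentration--compactness argument: since $2_s^*-\epsilon$ is subcritical the only source of noncompactness is invariance under translations, and the heart of the matter will be a strict comparison between $S_{2_s^*-\epsilon}^V$ and the analogous constant with the frozen potential $V_\infty=\sup_{\mathbb{R}^N}V$. Fix $\epsilon\in(0,2_s^*-2)$ and write $q:=2_s^*-\epsilon\in(2,2_s^*)$. First I would take a minimizing sequence $\{u_n\}\subset H^s_V(\mathbb{R}^N)$ for (\ref{fun}), i.e.\ $\|u_n\|_q^q=1$ and $\|u_n\|_{s,V}^2\to S_{2_s^*-\epsilon}^V$; replacing $u_n$ by $|u_n|$ — which does not increase $[u_n]_s$ and leaves $\int_{\mathbb{R}^N}Vu_n^2$ and $\|u_n\|_q$ unchanged — I may assume $u_n\ge0$. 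By $(V_1)$ the norm $\|\cdot\|_{s,V}$ is equivalent to the $H^s(\mathbb{R}^N)$-norm, hence $\{u_n\}$ is bounded in $H^s(\mathbb{R}^N)$ and, up to a subsequence, $u_n\rightharpoonup w_\epsilon\ge0$ in $H^s(\mathbb{R}^N)$, $u_n\to w_\epsilon$ in $L^p_{\mathrm{loc}}(\mathbb{R}^N)$ for every $p\in[1,2_s^*)$, and a.e. It then suffices to show that no mass is lost, i.e.\ $\|w_\epsilon\|_q=1$, since weak lower semicontinuity of $\|\cdot\|_{s,V}^2$ will give $\|w_\epsilon\|_{s,V}^2\le\liminf\|u_n\|_{s,V}^2=S_{2_s^*-\epsilon}^V$ and $w_\epsilon$ will be the desired minimizer.

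Next I would record the comparison $S_{2_s^*-\epsilon}^V\le S_\infty:=S_{2_s^*-\epsilon}^{V_\infty}$ (immediate, since $V\le V_\infty$), and show it is \emph{strict} unless $V\equiv V_\infty$. If $V\equiv V_\infty$ a positive radial minimizer exists at once by rescaling the known minimizer $U$ for $V\equiv1$, namely $w_\epsilon(x)=cU(V_\infty^{1/(2s)}x)$ with $c$ fixing the constraint; see \cite{PEAJ12,DPV13,FRLE}. If $V\not\equiv V_\infty$, pick by continuity a point $x^*$ with $V(x^*)<V_\infty$, so that $V<V_\infty$ on a neighbourhood of $x^*$; letting $U$ be a positive radial minimizer for $S_\infty$ normalized by $\|U\|_q=1$ and testing (\ref{fun}) with $U(\cdot-x^*)$ yields
$$\|U(\cdot-x^*)\|_{s,V}^2=[U]_s^2+\int_{\mathbb{R}^N}V(x)\,U(x-x^*)^2\,dx<[U]_s^2+V_\infty\|U\|_2^2=S_\infty,$$
using $U>0$ everywhere and that $0\le V_\infty-V$ is bounded; hence $S_{2_s^*-\epsilon}^V<S_\infty$.

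It then remains to treat the case $V\not\equiv V_\infty$, where $S_{2_s^*-\epsilon}^V<S_\infty$. Here I would apply the concentration--compactness principle of Lions to the probability densities $\rho_n:=u_n^q$. Vanishing is excluded: if $\sup_{y}\int_{B_R(y)}\rho_n\to0$ for some $R>0$, then by Hölder $\sup_y\int_{B_R(y)}u_n^2\to0$ and the (fractional) vanishing lemma forces $u_n\to0$ in $L^q(\mathbb{R}^N)$, contradicting $\|u_n\|_q=1$. Dichotomy is excluded by strict superadditivity: splitting the mass into fractions $\theta$ and $1-\theta$, $\theta\in(0,1)$, with asymptotically disjoint supports and using $\|v\|_{s,V}^2\ge S_{2_s^*-\epsilon}^V\|v\|_q^2$ for \emph{every} $v\in H^s_V(\mathbb{R}^N)$, one gets $S_{2_s^*-\epsilon}^V\ge S_{2_s^*-\epsilon}^V\bigl(\theta^{2/q}+(1-\theta)^{2/q}\bigr)>S_{2_s^*-\epsilon}^V$ since $t^{2/q}>t$ on $(0,1)$, a contradiction. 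Therefore the compactness alternative holds: there are $y_n\in\mathbb{R}^N$ with $\{u_n(\cdot+y_n)\}$ tight. If $|y_n|\to\infty$, then, using $\liminf_{|x|\to\infty}V=V_\infty$ together with weak lower semicontinuity, the (nontrivial) tight limit $v$ would satisfy $S_\infty\le\|v\|_{s,V_\infty}^2\le\liminf\|u_n\|_{s,V}^2=S_{2_s^*-\epsilon}^V<S_\infty$, impossible; hence $\{y_n\}$ is bounded, whence $\{u_n\}$ is itself tight and, combined with the local convergence, $u_n\to w_\epsilon$ strongly in $L^q(\mathbb{R}^N)$ with $\|w_\epsilon\|_q=1$, as required.

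I expect the main obstacle to be precisely this loss of compactness under translations — the danger that the minimizing sequence drifts off to infinity — which is neutralised by the strict energy inequality $S_{2_s^*-\epsilon}^V<S_\infty$ (or, when $V\equiv V_\infty$, by sheer translation invariance). Proving that inequality via the off-centre test function is the substantive point; the concentration--compactness bookkeeping, together with the standard fractional estimates that let one split the Gagliardo seminorm along well-separated supports, is routine.
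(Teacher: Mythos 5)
Your proposal is correct, but it runs the compactness argument along a genuinely different route than the paper. Both proofs pivot on the same two ingredients: existence of a positive minimizer for the constant-potential problem $S_{2_s^*-\epsilon}^{\infty}$ (the case $V\equiv V_\infty$ being settled by scaling, as you note) and the strict inequality $S_{2_s^*-\epsilon}^{V}<S_{2_s^*-\epsilon}^{\infty}$ when $V\not\equiv V_\infty$, obtained by testing with that minimizer -- your translation of the bubble to a point where $V<V_\infty$ is harmless but not even needed, since the bubble is everywhere positive. From there you invoke the full Lions trichotomy for the densities $u_n^{\,2_s^*-\epsilon}$: vanishing is ruled out by the normalization and the fractional vanishing lemma, dichotomy by the strict inequality $\theta^{2/(2_s^*-\epsilon)}+(1-\theta)^{2/(2_s^*-\epsilon)}>1$, and drift of the concentration points to infinity by comparison with $S_{2_s^*-\epsilon}^{\infty}$. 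The paper instead bypasses the trichotomy: it takes the weak limit $w$ of the minimizing sequence, sets $v_n=w_n-w$, and combines the Brezis--Lieb lemma with the asymptotic additivity of $\|\cdot\|_{s,V}^2$ and the observation that the escaping part $v_n$ may be measured with $V_\infty$ (because $V\to V_\infty$ at infinity and $v_n\to 0$ in $L^2_{loc}$, cf.\ (\ref{exis7})), arriving at $S_{2_s^*-\epsilon}^{V}\geq \ell^{2/(2_s^*-\epsilon)}S_{2_s^*-\epsilon}^{V}+(1-\ell)^{2/(2_s^*-\epsilon)}S_{2_s^*-\epsilon}^{\infty}$ with $\ell=\|w\|_{2_s^*-\epsilon}^{2_s^*-\epsilon}$, which forces $\ell=1$. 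The trade-off is real: the paper's splitting is shorter and sidesteps the one genuinely nonlocal technicality in your plan, namely the almost-splitting of the Gagliardo seminorm for two pieces with separating supports needed to exclude dichotomy (the cross term does not vanish identically and must be estimated, e.g.\ via cutoff commutator bounds); your route, in exchange, makes the geometric picture explicit -- tightness up to translations, with the translation drift killed by the behaviour of $V$ at infinity -- which is exactly the information the paper compresses into the displayed inequality. When writing up your exclusion of $|y_n|\to\infty$, spell out the lower bound $\liminf_n\|u_n\|_{s,V}^2\geq [v]_s^2+V_\infty\|v\|_2^2\geq S_{2_s^*-\epsilon}^{\infty}$ using weak lower semicontinuity and $V(\cdot+y_n)\geq V_\infty-\delta$ on fixed balls (this is where $(V_1)$, i.e.\ $\sup V=\liminf_{|x|\to\infty}V$, enters), in parallel with the paper's treatment of $v_n$.
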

\begin{proof} We assume that $V(x)\not\equiv V_\infty$, otherwise the result is obvious.
 Let $\{w_n\}$ be a minimizing sequence for $ S_{2_s^*-\epsilon}^V$. Since $|w_n|\in  H_{V}^s(\mathbb{R}^N)$ and $[|w_n|]_s\leq [w_n]_s$, we may assume that $w_n$ is nonnegative.  Clearly, $\{w_n\}$ is bounded in $ H_{V}^s(\mathbb{R}^N)$  and $\|w_n\|_{2_s^*-\epsilon}^{2_s^*-\epsilon}=1.$
Therefore, up to subsequences if necessary, there exists $w\in H_{V}^s(\mathbb{R}^N)$  such that $w_n\rightharpoonup w$ in  $H_{V}^s(\mathbb{R}^N)$ as $n\rightarrow +\infty$.
Let
$\ell=\|w\|_{2_s^*-\epsilon}^{2_s^*-\epsilon}$, then $0\leq \ell \leq 1$.  We next claim that actually $ \ell =1$.

\noindent Indeed, let 
\begin{equation}\label{fun}
 S_{2_s^*-\epsilon}^\infty:=\inf_{ u\in H_{V_{\infty}}^s(\mathbb{R}^N)  }\{\|u\|_{s,V_\infty}^2:\|u\|_{2_s^*-\epsilon}^{2_s^*-\epsilon}=1\}.
\end{equation}
Then minimizer $u$  for  $S_{2_s^*-\epsilon}^\infty$ exists and does not change sign,  see e.g. \cite{PEAJ12, DPV13}. Without loss of generality, we assume $u$ is positive.
  Using this $u$ as a test function we can show that if $V (x)$ is not identically equal to $V_\infty$, then $S_{2_s^*-\epsilon}^V<S_{2_s^*-\epsilon}^\infty$.

\noindent Set $v_n=w_n-w$, then  $v_n\rightharpoonup 0$ in  $H_{V}^s(\mathbb{R}^N)$ and $v_n\rightarrow 0$ in $L_{loc}^2(\mathbb{R}^N)$   as $n\rightarrow +\infty$ and by   Bresiz-Lieb lemma, we have
 \begin{equation}\label{exis5}
   \begin{split}\lim_{n\rightarrow+\infty}\|v_n\|_{2_s^*-\epsilon}^{2_s^*-\epsilon}=&\lim_{n\rightarrow+\infty}\|w_n\|_{2_s^*-\epsilon}^{2_s^*-\epsilon}- \|w\|_{2_s^*-\epsilon}^{2_s^*-\epsilon} \\
  =&1- \|w\|_{2_s^*-\epsilon}^{2_s^*-\epsilon}\\
  =&1-\ell.
   \end{split}
 \end{equation}
On the one hand we have
 \begin{equation}\label{exis6}
   \begin{split}
  \lim_{n\rightarrow +\infty} \|w_n\|_{s,V}^2=&  \lim_{n\rightarrow +\infty}\|v_n\|_{s,V}^2 +\|w\|_{s,V}^2\\&+2 \lim_{n\rightarrow +\infty}\int_{\mathbb{R}^N\times\mathbb{R}^N}\frac{(v_n(x)-v_n(y))(w(x)-w(y))}{|x-y|^{N+2s}}dxdy\\&
   + \lim_{n\rightarrow +\infty}\int_{\mathbb{R}^N}V(x)v_nwdx\\
   =&  \lim_{n\rightarrow +\infty}\|v_n\|_{s,V}^2 +\|w\|_{s,V}^2.
 \end{split}
 \end{equation}

 \noindent On the other hand,  by $(V_1)$  and   $v_n\rightarrow 0$ in $L_{loc}^2(\mathbb{R}^N)$  as $n\rightarrow +\infty$, we have
  $$\lim_{n\rightarrow +\infty}\int_{\mathbb{R}^N}[V(x)-V_\infty]w_n^2dx=0.$$
Thus,
 we have\begin{equation}\label{exis7}
   \begin{split}
 \lim_{n\rightarrow +\infty}\|v_n\|_{s,V(x)}^2
  = \lim_{n\rightarrow +\infty}\|v_n\|_{s,V_\infty}^2.
 \end{split}
 \end{equation}

\noindent For $0\leq \ell\leq 1$, by the definitions of $S_{2_s^*-\epsilon}^V$ and $S_{2_s^*-\epsilon}^\infty$,   we have
\begin{equation}\label{exis8}
   \begin{split}\|w\|_{s,V(x)}^2\geq \ell^{\frac{2}{2_s^*-\epsilon}}S_{2_s^*-\epsilon}^V  \end{split}
 \end{equation} and by (\ref{exis5}), we get
 \begin{equation}\label{exis9}
   \begin{split}  \lim_{n\rightarrow +\infty} \|v_n\|_{s,V_\infty}^2\geq  \lim_{n\rightarrow +\infty}\|v_n\|_{2_s^*-\epsilon}^2 S_{2_s^*-\epsilon}^\infty =(1-\ell)^{\frac{2}{2_s^*-\epsilon}}S_{2_s^*-\epsilon}^\infty. \end{split}
 \end{equation}

 \noindent Therefore,  by (\ref{exis6}), (\ref{exis7}), (\ref{exis8}) and (\ref{exis9}), we have
  \begin{equation}\label{exis10}
   \begin{split}
 S_{2_s^*-\epsilon}^V \geq\ell^{\frac{2}{2_s^*-\epsilon}}S_{2_s^*-\epsilon}^V+   (1-\ell)^{\frac{2}{2_s^*-\epsilon}}S_{2_s^*-\epsilon}^\infty
 \end{split}
 \end{equation}
 which gives
   \begin{equation}\label{exis11}
   \begin{split}
1-\ell^{\frac{2}{2_s^*-\epsilon}}\geq  (1-\ell)^{\frac{2}{2_s^*-\epsilon}}.
 \end{split}
 \end{equation}
Thus, from  (\ref{exis11}), we deduce that $\ell=0$ or $\ell=1$. If $\ell=0$, then from (\ref{exis10}), we get $ S_{2_s^*-\epsilon}^V \geq S_{2_s^*-\epsilon}^\infty$, which is a contradiction. Thus, $\ell=1$, that is, $\|w\|_{2_s^*-\epsilon}=1$ and thus $w$ is a minimizer of $S_{2_s^*-\epsilon}^V$.
\end{proof}

\begin{remark} Notice that in the proof of Theorem \ref{ThA},   condition $S_{2_s^*-\epsilon}^V<S_{2_s^*-\epsilon}^\infty$ plays an important role. This is guaranteed by condition $(V_1)$ with $V(x)\not\equiv V_\infty$.

\noindent By the Lagrange multiplier rule, there exists some $\lambda_\epsilon>0$ such that  $w_\epsilon$ is a solution of the following equation
\begin{equation}\label{maineq01-1}(-\Delta)^s u+V(x)u= \lambda_\epsilon u^{2_s^*-1-\epsilon}  \ \ \text{in}\ \ \mathbb{R}^N.\end{equation}
By the maximum principle $w_\epsilon>0$. In fact, $w_\epsilon\geq0$, and if there exists some $x_0$ such that $w_\epsilon(x_0)=0$, then
\begin{equation}\label{maineq01-1}0\leq (-\Delta)^s w_\epsilon (x_0)+V(x)w_\epsilon(x_0)=c_{N,s}\text{PV}\int_{\mathbb{R}^n}\frac{-u(y)}{|x_0-y|^{N+2s}}dy<0,\end{equation}
thus a contradiction.

\end{remark}

\begin{remark} If $V(x)$ is radial, by means of symmetric rearrangement techniques, we may
assume that $w_n$ is radially symmetric  (cf. \cite{PAR11}). Thus, the minimizer  $w$ is radial.

\end{remark}

\noindent Next we proof a Pohozaev type identity for  the  nonlocal equation
\begin{equation}\label{Poh0}(-\Delta)^s u =f(x,u) \ \ \text{in}\ \   \mathbb{R}^N. \end{equation}
The argument is similar to \cite{BMU17}, where the  Pohozaev identity for   autonomous nonlocal equations was established, hence we just stress the differences. 
\begin{theorem}[Pohozaev identity] \label{Pohozae}
Let $u\in H^s(\mathbb{R}^N)\cap L^\infty(\mathbb{R}^N)$ be a positive solution to (\ref{Poh0})
 and $F(x,t)\in L^1(\mathbb{R}^N)$, where $F(x,t)=\int_0^tf(x,s)ds$. Then we have
\begin{equation}\label{Poh}
   \begin{split}
\frac{ N-2s}{2} \int_{ \mathbb{R}^N}  f(x,u)u dx=\int_{ \mathbb{R}^N}  \left[ NF(x,u)    + (x\cdot \nabla_x F(x,u) )\right] dx.
 \end{split}
 \end{equation}

\end{theorem}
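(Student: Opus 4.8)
The plan is to establish the Pohozaev identity \eqref{Poh} by testing equation \eqref{Poh0} against the dilation generator $x\cdot\nabla u$, following the scheme of \cite{BMU17} for the autonomous case and tracking the extra term coming from the explicit $x$-dependence of $F$. First I would regularize: since $x\cdot\nabla u$ need not belong to the natural energy space, I would work with the rescaled family $u_\lambda(x):=u(\lambda x)$ for $\lambda>0$, and compute $\frac{d}{d\lambda}\big|_{\lambda=1}$ of the energy functional
\[
J_\lambda:=\frac12\,[u_\lambda]_s^2-\int_{\mathbb{R}^N}F(x,u_\lambda)\,dx,
\]
noting that $u$ being a solution means $u_1$ is a critical point of $J$, but $u_\lambda$ is not a variation with fixed domain since the nonlinearity is nonautonomous — so I would instead combine the scaling of the Dirichlet term with a direct change of variables in the potential term.

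The key computations are the two scaling identities. For the fractional seminorm, the change of variables $x\mapsto \lambda^{-1}x$, $y\mapsto\lambda^{-1}y$ gives $[u_\lambda]_s^2=\lambda^{2s-N}[u]_s^2$, hence $\frac{d}{d\lambda}\big|_{\lambda=1}\tfrac12[u_\lambda]_s^2=\tfrac{2s-N}{2}[u]_s^2$. On the other hand, pairing \eqref{Poh0} against $x\cdot\nabla u$ (justified by the regularity $u\in H^s\cap L^\infty$ together with the interior smoothness of solutions to fractional equations, exactly as in \cite{BMU17}) yields $\int (-\Delta)^s u\,(x\cdot\nabla u)\,dx=\tfrac{2s-N}{2}[u]_s^2$ for the left-hand side and $\int f(x,u)\,(x\cdot\nabla u)\,dx$ for the right-hand side. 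For the latter I would write $f(x,u)\,x\cdot\nabla u = x\cdot\nabla_x\big(F(x,u)\big) - (x\cdot\nabla_x F)(x,u)$, where $\nabla_x F$ denotes the partial gradient in the first slot, and then integrate by parts in $x$: $\int x\cdot\nabla_x(F(x,u))\,dx=-N\int F(x,u)\,dx$, using $F(\cdot,u)\in L^1(\mathbb{R}^N)$ to control the boundary terms at infinity via an exhaustion/cutoff argument. Equating the two expressions for the left-hand side pairing gives
\[
\frac{2s-N}{2}[u]_s^2=-N\int_{\mathbb{R}^N}F(x,u)\,dx-\int_{\mathbb{R}^N}(x\cdot\nabla_x F)(x,u)\,dx,
\]
while the energy (Nehari-type) identity obtained from testing \eqref{Poh0} against $u$ itself reads $[u]_s^2=\int f(x,u)u\,dx$. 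Substituting the latter into the former and rearranging produces exactly \eqref{Poh}.

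The main obstacle is rigor in the two integration-by-parts steps: neither $x\cdot\nabla u$ nor the truncation of $(-\Delta)^s u$ against it is obviously admissible, and the nonlocal operator does not localize, so one cannot naively integrate by parts on balls. The standard remedy — and the place where I expect to spend the most care — is to use the scaling argument on $u_\lambda$ to \emph{define} the quantity $\int (-\Delta)^s u\,(x\cdot\nabla u)\,dx$ as the limit $\lim_{\lambda\to1}\lambda^{-1}\big(\tfrac12[u_\lambda]_s^2-\tfrac12[u]_s^2\big)/(\lambda-1)$, which is manifestly $\tfrac{2s-N}{2}[u]_s^2$, thereby bypassing the direct pairing; symmetrically, the potential term is handled by the exact change of variables $\int F(x,u_\lambda)\,dx=\lambda^{-N}\int F(\lambda^{-1}y,u(y))\,dy$ and differentiating under the integral sign, which is where the $\int(x\cdot\nabla_x F)(x,u)\,dx$ term is produced cleanly, with the $L^1$ hypothesis on $F$ and the boundedness of $x\cdot\nabla V$-type terms (here absorbed into $\nabla_x F$) ensuring dominated convergence. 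I would remark that this is precisely the point at which the argument diverges from \cite{BMU17}, whose $F$ has no $x$-dependence, so the $(x\cdot\nabla_x F)$ contribution is the only genuinely new term.
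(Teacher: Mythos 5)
Your overall scheme (a dilation identity for the quadratic form combined with the Nehari identity $[u]_s^2=\int f(x,u)u\,dx$, plus the splitting $f(x,u)\,x\cdot\nabla u = x\cdot\nabla_x\bigl(F(x,u)\bigr)-(x\cdot\nabla_x F)(x,u)$) is the right algebra, and the treatment of the potential term via change of variables, dominated convergence and the $L^1$ hypothesis on $F$ is fine. But the core analytic step is missing. The equation only tells you that $\langle u,\phi\rangle_s=\int f(x,u)\phi\,dx$ for admissible test functions $\phi\in H^s(\mathbb{R}^N)$; to get the Pohozaev identity you must be allowed to use $\phi=x\cdot\nabla u$, or equivalently to show that $\frac{d}{d\lambda}\big|_{\lambda=1}\tfrac12[u_\lambda]_s^2$ and $\frac{d}{d\lambda}\big|_{\lambda=1}\int F(x,u_\lambda)\,dx$ are equal. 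Your remedy --- ``define'' $\int(-\Delta)^su\,(x\cdot\nabla u)\,dx$ as the scaling limit of the quadratic form and compute the potential term separately --- does not close this: with that definition the equation is never invoked, and the claimed equality $\frac{2s-N}{2}[u]_s^2=\int f(x,u)(x\cdot\nabla u)\,dx$ is exactly the statement to be proved, not a consequence of weak solvability. To run the difference-quotient version rigorously you would need $\frac{u_\lambda-u}{\lambda-1}\to x\cdot\nabla u$ strongly in $H^s$ (so that the chain rule $\frac{d}{d\lambda}J(u_\lambda)\big|_{\lambda=1}=J'(u)[x\cdot\nabla u]=0$ applies), i.e.\ $x\cdot\nabla u\in H^s$ together with quantitative decay of $u$ and $\nabla u$ at infinity; none of this follows from the hypothesis $u\in H^s(\mathbb{R}^N)\cap L^\infty(\mathbb{R}^N)$, and it is precisely the obstruction that forces the known proofs to take a different route. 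Your citation of \cite{BMU17} as justifying a ``direct pairing with $x\cdot\nabla u$'' is also inaccurate: that paper does not pair directly either.

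For comparison, the paper avoids the issue by passing to the Caffarelli--Silvestre extension $w$ of $u$: it multiplies the local degenerate equation $-\mathrm{div}(y^{1-2s}\nabla w)=0$ by $((x,y)\cdot\nabla w)\varphi_R$ with a cutoff $\varphi_R$, integrates by parts so that the nonlinearity only enters through the boundary term $\frac{\partial w}{\partial\nu^s}=f(x,u)$, kills the cutoff error using $F(\cdot,u)\in L^1(\mathbb{R}^N)$, invokes the extension identity of \cite{BMU17} to evaluate the bulk term as $\frac{2s-N}{2}\int y^{1-2s}|\nabla w|^2$, and then combines with the tested-by-$w\varphi_R$ (Nehari-type) identity. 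If you want to keep a purely nonlocal argument you would have to either prove the required decay of $u$, $\nabla u$ (so that $x\cdot\nabla u$ becomes an admissible direction, possibly after a cutoff-and-limit argument on the Gagliardo form), or switch to an extension/cutoff argument of the above type; as written, the proposal has a genuine gap at its central step.
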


\begin{proof}

Let $u$ be a bounded weak nontrivial solution. Suppose that $w$ is the harmonic extension of $u$, see e.g. \cite{CASI07}. Then, $w$ satisfies
\begin{equation}\label{Poh1}
\left\{\begin{array}{cccc}
-\text{div}(y^{1-2s}\nabla w)=0&\ \ &\text{ in }&   \mathbb{R}^{N+1}_+,\\
\frac{\partial w}{\partial\nu^{s}}= f((\cdot,0), w(\cdot,0))&\ \ &\text{ in }&   \mathbb{R}^N\times \{y=0\}. \end{array}\right.
\end{equation}

\noindent For $r>0$, let
$$B_r:=\{(x,y)\in \mathbb{R}^{N+1}:|(x,y)|<r\}$$ and $$B_r^+=B_r\cap  \mathbb{R}^{N+1}_+,\ \
Q_r=B_r^+\cup (B_r\cap(\mathbb{R}^N\times \{0\})).$$

\noindent Let $\phi\in C_0^\infty(\mathbb{R}^{N+1})$ with $0\leq \phi \leq 1$, $\phi=1$ in $B_1$ and $\phi=0$ in $B_2^c$, $|\nabla \phi|\leq 2$.   For $R>0$, let
$$\varphi_R(x,y)=\varphi\left(\frac{(x,y)}{R}\right),$$ where $\varphi:=\phi|_{\mathbb{R}^{N+1}_+}$.

\noindent Then, multiplying  (\ref{Poh1}) by $((x, y)
\cdot \nabla w)\varphi_R$ and integrating in $\mathbb{R}^{N+1}_+$, we have,
\begin{equation}\label{PO1}
   \begin{split}
 \int_{Q_{2r}}\text{div} (y^{1-2s}\nabla w)[((x,y)\cdot \nabla w)\varphi_R]dxdy=0.
 \end{split}
 \end{equation}
From (\ref{PO1}), by  integrating by parts,  we get
\begin{equation}\label{Poh2}
   \begin{split}
 &\int_{Q_{2r}} y^{1-2s}\nabla w \nabla[((x,y)\cdot \nabla w)\varphi_R]dxdy\\= & \int_{\partial Q_{2r}} y^{1-2s}(\nabla w\cdot \mathbf{n})  [((x,y)\cdot \nabla w)\varphi_R]dS\\
  =&-\lim\limits_{y\rightarrow 0^+}\int_{ B_{2R}\cap(\mathbb{R}^N\times\{y\})} y^{1-2s}\frac{\partial w}{\partial y}   [ ((x,y)\cdot \nabla w)\varphi_R] dx\\
  =&k_{s}^{-1}\int_{ B_{2R}\cap(\mathbb{R}^N\times\{0\})} (x\cdot \nabla_xw)\varphi_R\frac{\partial w}{\partial \nu^{s}} dx\\
  =&k_{s}^{-1}\int_{ B_{2R}\cap(\mathbb{R}^N\times\{0\})} (x\cdot \nabla_xw)\varphi_Rf(x,w) dx\\
  =&k_{s}^{-1}\int_{ B_{2R}\cap(\mathbb{R}^N\times\{0\})} (x\cdot \nabla F(x,u) )\varphi_R  dx-k_{s}^{-1}\int_{ B_{2R}\cap(\mathbb{R}^N\times\{0\})} (x\cdot \nabla_x F(x,u) )\varphi_R  dx\\
  =&-Nk_{s}^{-1}\int_{ B_{2R}\cap(\mathbb{R}^N\times\{0\})}   F(x,u)  \varphi_R  dx-k_{s}^{-1}\int_{ B_{2R}\cap(\mathbb{R}^N\times\{0\})}   F(x,u)  (x\cdot \nabla_x\varphi_R)  dx
  \\&-k_{s}^{-1}\int_{ B_{2R}\cap(\mathbb{R}^N\times\{0\})} (x\cdot \nabla_x F(x,u) )\varphi_R  dx.
 \end{split}
 \end{equation}
For the second integral in the last equality in (\ref{Poh2}), we have
\begin{equation}
   \begin{split}
 \int_{ B_{2R}\cap(\mathbb{R}^N\times\{0\})}   F(x,u) (x\cdot \nabla_x\varphi_R)  dx\leq &C \int_{ (B_{2R}\setminus B_R)\cap(\mathbb{R}^N\times\{0\})}   F(x,u) \frac{|x|}{R}  dx\\
  \leq& C \int_{ (B_{2R}\setminus B_R)\cap(\mathbb{R}^N\times\{0\})}   F(x,u)  dx\rightarrow 0,\text{ as } R\rightarrow+\infty
 \end{split}
 \end{equation}
since $F(x,t)\in L^1(\mathbb{R}^N)$.
As a consequence, from (\ref{Poh2}) we have
\begin{equation}\label{Poh3}
   \begin{split}
 &\lim\limits_{R\rightarrow +\infty}\int_{Q_{2r}} y^{1-2s}\nabla w \nabla[((x,y)\cdot \nabla w)\varphi_R]dxdy\\=&-k_{s}^{-1}\int_{ \mathbb{R}^N}  \left[ NF(x,u)    + (x\cdot \nabla_x F(x,u) )\right] dx.
 \end{split}
 \end{equation}
On the other hand, similar to the proof of Theorem A.1 in \cite{BMU17},   we have
\begin{equation}\label{Poh4}
   \begin{split}
 \lim\limits_{R\rightarrow +\infty}\int_{Q_{2r}} y^{1-2s}\nabla w \nabla[((x,y)\cdot \nabla w)\varphi_R]dxdy =  \frac{2s-N}{2}\int_{\mathbb{R}^{N+1}_+}y^{1-2s}|\nabla w|^2dxdy.
 \end{split}
 \end{equation}
Thanks to (\ref{Poh3}) and (\ref{Poh4}),  we have
\begin{equation}\label{Poh5}
   \begin{split}
  \frac{ N-2s}{2}\int_{\mathbb{R}^{N+1}_+}y^{1-2s}|\nabla w|^2dxdy= k_{s}^{-1}\int_{ \mathbb{R}^N}  \left[ NF(x,u)    + (x\cdot \nabla_x F(x,u) )\right] dx.
 \end{split}
 \end{equation}

\noindent Multiply (\ref{Poh1}) by $w\varphi_R$ and ntegrate by parts to get
\begin{equation}\label{Poh5}
   \begin{split}
 \int_{Q_{2r}} y^{1-2s}\nabla w \nabla(w\varphi_R)dxdy = & \int_{\partial Q_{2r}} y^{1-2s}(\nabla w\cdot \mathbf{n})   w\varphi_R dS\\
   =&k_{s}^{-1}\int_{ B_{2R}\cap(\mathbb{R}^N\times\{0\})} \frac{\partial w}{\partial \nu^{s}} w\varphi_R  dx \\
   =&k_{s}^{-1}\int_{ B_{2R}\cap(\mathbb{R}^N\times\{0\})}f(x,u) u\varphi_R  dx
 \end{split}
 \end{equation}
Proceed now as above to get 
\begin{equation}\label{Poh6}
   \begin{split}
 \int_{\mathbb{R}^{N+1}_+}y^{1-2s}|\nabla w|^2dxdy=  k_{s}^{-1}\int_{ \mathbb{R}^N}  f(x,u)u dx.
 \end{split}
 \end{equation}
Combining (\ref{Poh4}) and (\ref{Poh6}), we deduce that
\begin{equation}\label{Poh6}
   \begin{split}
\frac{ N-2s}{2} \int_{ \mathbb{R}^N}  f(x,u)u dx=\int_{ \mathbb{R}^N}  \left[ NF(x,u)    + (x\cdot \nabla_x F(x,u) )\right] dx.
 \end{split}
 \end{equation}
\end{proof}

\noindent Finally, we prove a crucial local estimate.  This type of estimate has been studied in Proposition 3.1  and Proposition 2.4 in  \cite{TLX11}. Their methods relies on a localization method introduced by Caffarelli
and Silvestre in \cite{CASI07} and the standard Moser iteration. However, these estimates
contain  the extension local domain $Q_R$,
which has no clear interpretation in terms of the original problem in $\mathbb{R}^N$ that is our context. We now give another version of this estimate based on a more direct test function method and Moser's iteration.

\begin{theorem} \label{TH5}
 Assume $a(x)\in L_{loc}^{t}(\mathbb{R}^N)$ for some $t>\frac{N}{2s}$ and that $u\geq 0$ satisfies $$(-\Delta)^s u\leq a(x)u,\ \ x\in \mathbb{R}^N\ .$$
 Then
  \begin{equation}\label{bb1}
   \begin{split}
  \max_{B_r} u(x)\leq C\left(\int_{B_R}|u|^{2_s^*}dx\right)^{\frac{1}{2_s^*}},\ \ 0<r<R,
   \end{split}
 \end{equation}
 where the constant $C>0$ depends only on $N$, $s$, $R$, $t$ and $\|a(x)\|_{L^{t}_{loc}(\mathbb{R}^N)}$.

\end{theorem}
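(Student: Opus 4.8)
\noindent The plan is to run a Moser iteration directly in $\mathbb{R}^N$, avoiding the Caffarelli--Silvestre extension (and hence the cylinder $Q_R$ appearing in \cite{TLX11}); the price is that the nonlocal tail terms generated by cut-off functions in the Gagliardo bilinear form must be controlled by hand. First I would fix radii $r\le \rho_1<\rho_2\le R$ and a cut-off $\eta\in C_c^\infty(B_{\rho_2})$ with $0\le\eta\le 1$, $\eta\equiv 1$ on $B_{\rho_1}$, support contained in $B_{(\rho_1+\rho_2)/2}$ and $|\nabla\eta|\le C/(\rho_2-\rho_1)$; this last ("fat'') choice makes $\eta$ vanish within a distance $\sim \rho_2-\rho_1$ of every point of its support, which is what will tame the tails. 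For $\beta\ge 1$ I would test the inequality $(-\Delta)^s u\le a(x)u$ against $\eta^2$ times a suitably truncated power of $u$ (so that the test function is bounded, hence admissible in $H^s$), split the double integral according to whether the two variables lie in $\operatorname{supp}\eta$, and use the elementary pointwise inequalities standard in the nonlocal De Giorgi--Nash--Moser theory. This leads, after passing to the limit in the truncation by monotone convergence, to a Caccioppoli-type bound of the schematic form
\[
[\eta\, u^\beta]_s^2\ \le\ C\beta^{\sigma}\left(\int_{B_{\rho_2}} a(x)\,\eta^2 u^{2\beta}\,dx+\frac{1}{(\rho_2-\rho_1)^{2s}}\int_{B_{\rho_2}}u^{2\beta}\,dx+\mathcal{T}\right),
\]
for a fixed $\sigma=\sigma(N,s)$, where $\mathcal{T}$ collects the contributions of pairs $(x,y)$ with $x\in\operatorname{supp}\eta$ and $y$ far from it.

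Next I would absorb the first two right-hand terms. For the potential term, Hölder's inequality with exponent $t$ gives $\int a\,\eta^2 u^{2\beta}\le\|a\|_{L^t(B_R)}\,\|\eta u^\beta\|_{2t'}^2$, and $2t'=2t/(t-1)$ lies strictly between $2$ and $2_s^*=\frac{2N}{N-2s}$ exactly because $t>\frac{N}{2s}$; interpolating the $L^{2t'}$ norm between $L^2$ and $L^{2_s^*}$, then applying Young's inequality and the fractional Sobolev inequality $\|\eta u^\beta\|_{2_s^*}^2\le S^{-1}[\eta u^\beta]_s^2$, one bounds $C\beta^\sigma\int a\,\eta^2 u^{2\beta}$ by $\tfrac12[\eta u^\beta]_s^2+C\beta^{\sigma'}\int_{B_{\rho_2}}u^{2\beta}$ (the constant depending also on $\|a\|_{L^t(B_R)}$) and absorbs the first summand on the left. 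As for $\mathcal{T}$, it is finite since $u$ lies in the natural domain $L^1(\mathbb{R}^N,(1+|y|)^{-N-2s}\,dy)$ of $(-\Delta)^s$ (and is uniformly bounded in every application made in this paper, where $0\le u\le 1$), and the distance estimate for the fat cut-off shows that it is dominated by lower-order terms of the same type. Using the Sobolev inequality once more on the left, this produces the reverse Hölder inequality
\[
\left(\int_{B_{\rho_1}}u^{2\beta\chi}\,dx\right)^{1/\chi}\ \le\ \frac{C\,\beta^{\gamma}}{(\rho_2-\rho_1)^{2s}}\int_{B_{\rho_2}}u^{2\beta}\,dx,\qquad \chi:=\frac{N}{N-2s}>1,
\]
valid whenever the right-hand side is finite, with $\gamma=\gamma(N,s)$ and $C=C(N,s,t,R,\|a\|_{L^t(B_R)})$.

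Then I would iterate this inequality along a dyadic sequence of radii $R_k=r+(R-r)2^{-k}$ (so $R_0=R$, $R_k\downarrow r$) with exponents $\beta_k=\chi^{\,k+1}$, starting from $\beta_0=\chi$, for which $2\beta_0=2_s^*$ and the first right-hand side is the finite quantity $\int_{B_R}u^{2_s^*}$. Raising the $k$-th estimate to the power $\beta_k^{-1}$ and multiplying, the product of the resulting constants converges because $\sum_k(k+1)\chi^{-k}<\infty$, while $\big(\int_{B_{R_k}}u^{2\chi^{k+1}}\,dx\big)^{1/\chi^{k+1}}\to\|u\|_{L^\infty(B_r)}^2$ as $k\to\infty$; since $2\chi=2_s^*$, the starting term equals $\big(\int_{B_R}u^{2_s^*}\big)^{2/2_s^*}$, and one concludes $\|u\|_{L^\infty(B_r)}^2\le C\big(\int_{B_R}u^{2_s^*}\,dx\big)^{2/2_s^*}$, which is (\ref{bb1}).

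The step I expect to be the genuine obstacle --- and the only real departure from the classical local Moser iteration --- is the control of the nonlocal tail $\mathcal{T}$: one must check that pairs $(x,y)$ with $x$ in the support of $\eta$ and $y$ far away do not spoil the Caccioppoli inequality. This is exactly where the compactly supported cut-off with a uniform gap to its complement, together with the a priori integrability of $u$ against the kernel $(1+|y|)^{-N-2s}$, is used; it is also what lets the whole argument live in $\mathbb{R}^N$ without the auxiliary extension cylinder $Q_R$ of \cite{TLX11}. Once the tail is under control, the remaining ingredients --- the pointwise algebraic inequalities for the bilinear form, the Hölder/interpolation/Sobolev absorption, and the Moser iteration --- are routine.
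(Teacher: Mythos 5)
Your overall scheme --- testing the inequality directly in $\mathbb{R}^N$ with $\eta^2$ times a truncated power of $u$, deriving a Caccioppoli-type bound, using H\"older on the $a$-term via $t>\frac{N}{2s}$, and running a Moser iteration --- is the same as the paper's; the only (harmless) variant is that you absorb the potential term by interpolating $L^{2t/(t-1)}$ between $L^2$ and $L^{2_s^*}$ plus Young, so your iteration gains the factor $\chi=\frac{N}{N-2s}$ per step, whereas the paper keeps the $L^{2\beta t/(t-1)}$ norm on the right-hand side and gains the factor $\frac{2_s^*(t-1)}{2t}>1$ instead; both iterations close and start from $\int_{B_R}u^{2_s^*}$.

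The genuine gap is exactly the point you flag and then dismiss: the nonlocal tail $\mathcal{T}$. Your justification (``finite since $u\in L^1(\mathbb{R}^N,(1+|y|)^{-N-2s}dy)$, and $0\le u\le 1$ in every application'') does not prove the theorem as stated. First, the constant in (\ref{bb1}) must depend only on $N,s,R,t,\|a\|_{L^t_{loc}}$, so a bound on $\mathcal{T}$ in terms of $\|u\|_\infty$ or of a tail integral of $u$ is not admissible; and the appeal to $0\le u\le 1$ is false precisely where the uniformity matters: Lemma \ref{DEC0} applies the theorem to $u_\epsilon$ with $\|u_\epsilon\|_\infty\to\infty$, and Lemma \ref{comp} applies it to the Kelvin transform $\Phi_\epsilon$, whose local boundedness is exactly what is being proved. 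Second, ``finite'' is not even clear along the iteration: at exponent $\beta_k$ the tail generated by the test function involves $\int_{\mathbb{R}^N\setminus B_{\rho_2}}u(y)^{\beta_k}\,(\text{or }u(y)^{2\beta_k-1})\,|x-y|^{-N-2s}\,dy$, not $u$ itself, and such a term is not ``dominated by lower-order terms of the same type''; this is precisely why local boundedness estimates for nonlocal subsolutions in the De Giorgi--Nash--Moser literature carry an explicit tail term. The paper sidesteps a separate tail by different bookkeeping: it keeps the whole commutator term $\iint\frac{|\eta(x)-\eta(y)|^2}{|x-y|^{N+2s}}\varphi(u)(x)^2\,dx\,dy$, shows $\bigl\|\int\frac{|\eta(\cdot)-\eta(y)|^2}{|\cdot-y|^{N+2s}}dy\bigr\|_{L^t(\mathbb{R}^N)}\le C(R-r)^{-2}$ including the far region $|x|\ge 2R$, and then applies H\"older under the explicitly stated standing assumption $u\in L^{\frac{2\beta t}{t-1}}(\mathbb{R}^N)$ before localizing. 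To complete your route you must do one of the same two things: either impose (and propagate through every step of the iteration) a global integrability assumption on $u$ that is available in the applications, or retain a tail term on the right-hand side of the Caccioppoli inequality and verify it is harmless in each application; as written, the absorption of $\mathcal{T}$ is asserted, not proved.
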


\begin{proof}
For $\beta>1$ and $T>0$, define the function
\begin{equation}
\varphi(t)=\left\{\begin{array}{cccc}
0&\ \ &\text{ if }&  t\leq 0,\\
t^\beta &\ \ &\text{ if }& 0< t\leq T,\\
\beta T^{\beta-1}(t-T)+T^\beta&\ \ &\text{ if }&  t\geq T. \end{array}\right.
\end{equation}
Notice that $\varphi(t)$ is a convex and differentiable function and thus
  \begin{equation} \label{bb2}
   \begin{split}
   (-\Delta)^s\varphi(u)\leq \varphi'(u)(-\Delta)^su.
   \end{split}
 \end{equation}
Let $\eta(x)=\eta(|x|)$ be a smooth cut-off function satisfying $\eta(x)=1$ in $B_r$, $0\leq \eta(x)\leq 1$, $\eta(x)=0$ in $B_R^c$ and $|\eta'|\leq \frac{C}{R-r}$ for some constant $C>0$, where $0<r<R$ has to be determined. For simplicity, in the following, we denote by $\varphi:=\varphi(u(x))$ and $\varphi':=\varphi_u(x)$.

\noindent Choose as test function $\phi(x)=\eta^2\varphi\varphi'$ to obtain
\begin{equation}\label{bb3}
   \begin{split} \int_{\mathbb{R}^N}\eta^2\varphi\varphi'(x)(-\Delta)^sudx\leq \int_{\mathbb{R}^N}\eta^2\varphi\varphi'audx.
     \end{split}
 \end{equation}
However, by (\ref{bb2}),  we have
\begin{equation}\label{bb4}
   \begin{split}  \int_{\mathbb{R}^N}\eta^2\varphi(-\Delta)^s\varphi dx\leq  \int_{\mathbb{R}^N}\eta^2\varphi\varphi'(-\Delta)^sudx.
     \end{split}
 \end{equation}
Using (\ref{bb3}) and (\ref{bb4}), the fact $u\varphi'\leq \beta \varphi$,
by Sobolev embedding theorem and Cauchy inequality, we get
  \begin{equation}\label{bb5}
   \begin{split}S(n,s)\|\eta\varphi\|_{2^*_s}^2\leq &\int_{\mathbb{R}^N}\eta\varphi(-\Delta)^s[\eta\varphi]dx\\
  =&\int_{\mathbb{R}^N}\eta^2\varphi(-\Delta)^s\varphi dx+\frac{1}{2}\int_{\mathbb{R}^N\times\mathbb{R}^N}\frac{ |\eta(x) -\eta(y|^2}{|x-y|^{N+2s}}\varphi(x)\varphi(y)dxdy\\
  \leq&\beta\int_{\mathbb{R}^N} a \eta^2 \varphi^2dx+ \int_{\mathbb{R}^N\times\mathbb{R}^N}\frac{ |\eta(x) -\eta(y|^2}{|x-y|^{N+2s}}\varphi(x)^2dxdy.
    \end{split}
 \end{equation}

\noindent In what follows, we assume $u\in L^{\frac{2\beta t}{t-1}}(\mathbb{R}^N)$ where $\beta$ has to be chosen later on.

\noindent From  $a(x)\in L_{loc}^{t}(\mathbb{R}^N)$ and $\varphi(t)\leq t^\beta$,   we have 
 \begin{equation}\label{bb6}
   \begin{split} \int_{\mathbb{R}^N} a \eta^2 u^{2\beta}dx
  \leq  \left[\int_{\mathbb{R}^N} (\eta a)^{t} dx\right]^{\frac{1}{t}}\left[\int_{\mathbb{R}^N} (\eta u^{2\beta})^{ \frac{t}{t-1}}dx\right]^{\frac{t-1}{t}}.
   \end{split}
 \end{equation}

\noindent Set
 \begin{equation}\label{bb71}
   \begin{split}   &\int_{\mathbb{R}^N }\left( \int_{\mathbb{R}^N }\frac{ |\eta(x) -\eta(y)|^2}{|x-y|^{N+2s}}dy\right)^{t}dx\\&=\int_{|x|\leq R }\left( \int_{\mathbb{R}^N }\frac{ |\eta(x) -\eta(y)|^2}{|x-y|^{N+2s}}dy\right)^{t}dx+\int_{|x|\geq R }\left( \int_{\mathbb{R}^N }\frac{ |\eta(x) -\eta(y)|^2}{|x-y|^{N+2s}}dy\right)^{t}dx\\
   &:=I_1+I_2.
   \end{split}
 \end{equation}
 We obtain 
 \begin{equation}\label{bb72}
   \begin{split}   I_1\leq&\frac{C^t}{(R-r)^{2t}}\int_{|x|\leq R }\left( \int_{ |y-x|\leq R }\frac{ 1}{|x-y|^{N+2s-2}}dy\right)^{t}dx\\&+ C^t \int_{|x|\leq R }\left( \int_{ |y-x|\geq R }\frac{ 1}{|x-y|^{N+2s}}dy\right)^{t}dx\\
   &=\frac{ (2-2s)^{-t} C^t}{(R-r)^{2t}} (N\omega_{N-1})^{t+1} R^{N+(2-2s)t}  +(2s)^{-t}C^t (N\omega_{N-1})^{t+1}R^{N-2st}
   \end{split}
 \end{equation}  and
  \begin{equation}\label{bb7}
   \begin{split}   I_2 =&
   \int_{|x|\geq R }\left( \int_{|y|\leq R}\frac{ |\eta(x) -\eta(y)|^2}{|x-y|^{N+2s}}dy\right)^{t}dx \\
    =&
   \int_{R\leq |x|\leq 2R }\left( \int_{|y|\leq R}\frac{ |\eta(x) -\eta(y)|^2}{|x-y|^{N+2s}}dy\right)^{t}dx
   +\int_{ |x|\geq 2R }\left( \int_{|y|\leq R}\frac{ |\eta(x) -\eta(y)|^2}{|x-y|^{N+2s}}dy\right)^{t}dx\\
   :=&I_3+I_4
   \end{split}
 \end{equation}
The estimate of $I_3$ is similar to the one for $I_1$.  Finally,
   \begin{equation}\label{bb7-}
   \begin{split}   I_4
   \leq&
  \int_{|x|\geq2 R }\left( \int_{|y|\leq R }\frac{ 1}{(|x|-R)^{N+2s}}dy\right)^{t}dx \\
  =&    (N\omega_{N-1})^{t} R^{(N-1)t}\int_{|x|\geq 2R } (|x|-R)^{-(N+2s)t}dx\\
\leq & C R^{N-(1+2s)t}
   \end{split}
 \end{equation}

By combining (\ref{bb4})-(\ref{bb7-}),
  we obtain 
  \begin{equation}\label{bb7}
   \begin{split}   \int_{\mathbb{R}^N }\left( \int_{\mathbb{R}^N }\frac{ |\eta(x) -\eta(y)|^2}{|x-y|^{N+2s}}dy\right)^{t}dx\leq\frac{C}{(R-r)^{2t}}.
   \end{split}
 \end{equation}

\noindent Hence 
  \begin{equation}\label{bb8}
   \begin{split}   \int_{\mathbb{R}^N\times\mathbb{R}^N}\frac{ |\eta(x) -\eta(y|^2}{|x-y|^{N+2s}}\varphi(x)^2dxdy\leq \left[\left( \int_{\mathbb{R}^N }\frac{ |\eta(x) -\eta(y)|^2}{|x-y|^{N+2s}}dy\right)^{t}dx\right]^{\frac{1}{t}}\left(\int_{\mathbb{R}^N}\eta^2 u^{2\beta \frac{t}{t-1}}dx\right)^{\frac{t-1}{t}}.   \end{split}
 \end{equation}

 \noindent Set $$C:=\left(\int_{\mathbb{R}^N} (\eta a(x))^{t} dx\right)^{\frac{1}{t}}+ \left[\int_{\mathbb{R}^N }\left( \int_{\mathbb{R}^N }\frac{ |\eta(x) -\eta(y)|^2}{|x-y|^{N+2s}}dy\right)^{t}dx\right]^{\frac{1}{t}}.$$ Combining (\ref{bb5}), (\ref{bb6}), (\ref{bb7}) and (\ref{bb8}), we get
   \begin{equation}\label{bb9}
   \begin{split}S(n,s)\|\eta\varphi\|_{2^*_s}^2\leq \frac{C}{(R-r)^{2}}\beta\left(\int_{\mathbb{R}^N}(\eta u^{2\beta})^{ \frac{t}{t-1}}dx\right)^{\frac{t-1}{t}}.
    \end{split}
 \end{equation}
Now let $T\rightarrow +\infty$, to obtain 
    \begin{equation}\label{bb10}
   \begin{split} \left(\int_{B_r}u^{\beta 2_s^*}dx\right)^{\frac{1}{\beta2_s^*}}\leq \left[\frac{C\beta}{(R-r)^{2}}\right]^{\frac{1}{2\beta}}\left(\int_{B_R}  u^{2\beta \frac{t}{t-1}}dx\right)^{\frac{t-1}{2t\beta}}
    \end{split}\ .
 \end{equation}
 Since $t>\frac{N}{2s}$,  we set $\beta_i=(\frac{2_s^*(t-1)}{2t})^i$, $i=1,2,\cdots$, $r_i=r_0+\frac{1}{2^i}$. By iterating, we get
   \begin{equation}\label{bb11}
   \begin{split} \left(\int_{B_{r_{m}}}u^{\beta_m 2_s^*}dx\right)^{\frac{1}{\beta_m2_s^*}}\leq \left[ \frac{2_s^*(t-1)C}{2t}\right]^{\frac{1}{2 }\sum_{i=1}^mi/\beta_i}\left(\int_{B_R}  u^{2_s^*}dx\right)^{\frac{1}{2_s^*}}
    \end{split}
 \end{equation}
 Let $m\rightarrow \infty$ to have 
  \begin{equation}\label{bb12}
   \begin{split}
  \max_{B_{r_0}} u(x)\leq C\left(\int_{B_R}u^{2_s^*}dx\right)^{\frac{1}{2_s^*}}.
   \end{split}
 \end{equation}
\end{proof}

 \section{Asymptotic behavior of ground states} \label{S2}

\noindent  Let  $w_\epsilon$ be a positive  minimizer for  $S_{2_s^*-\epsilon}^V$  obtained  in Theorem \ref{ThA}.
Then, by the Lagrange multiplier rule, there exists $\lambda_\epsilon>0$ such that  $w_\epsilon$ is a solution to the equation
\begin{equation}\label{maineq01}(-\Delta)^s u+V(x)u= \lambda_\epsilon u^{2_s^*-1-\epsilon}  \ \ \text{in}\ \ \mathbb{R}^N.\end{equation}
By multiplying both sides of   equation (\ref{maineq01}) by $w_\epsilon$ and then integrating, we get $\lambda_\epsilon=S_{2_s^*-\epsilon}^V$.

\noindent The energy associated with  equation (\ref{maineq01}) is given by 
\begin{equation}\label{funct1}J_\epsilon(u)=\frac{1}{2} \|u\|_{s,V}^2-\frac{1}{2_s^*-\epsilon}S_{2_s^*-\epsilon}^V\|u\|_{2_s^*-\epsilon}^{2_s^*-\epsilon}.\end{equation}
Thus, on the one hand we have
\begin{equation}\label{funct2}J_\epsilon(w_\epsilon)=\frac{1}{2} \|w_\epsilon\|_{s,V}^2-\frac{1}{2_s^*-\epsilon}S_{2_s^*-\epsilon}^V\|w_\epsilon\|_{2_s^*-\epsilon}^{2_s^*-\epsilon}=\frac{2_s^*-\epsilon-2}{2(2_s^*-\epsilon)}S_{2_s^*-\epsilon}^V\ .
\end{equation}

\noindent On the other hand, if $v$ is a nontrivial solution of  (\ref{maineq01}), then it   satisfies $\|v\|_{s,V}^2=S_{2_s^*-\epsilon}^V\|v\|_{2_s^*-\epsilon}^{2_s^*-\epsilon}$ and thus
 \begin{equation}\label{funct3}
   \begin{split} J_\epsilon(v)=&\frac{1}{2} \|v\|_{s,V}^2-\frac{1}{2_s^*-\epsilon}S_{2_s^*-\epsilon}^V\|v\|_{2_s^*-\epsilon}^{2_s^*-\epsilon}\\
   =&\frac{2_s^*-\epsilon-2}{2(2_s^*-\epsilon)}S_{2_s^*-\epsilon}^V\|v\|_{2_s^*-\epsilon}^{2_s^*-\epsilon}
   . \end{split}
 \end{equation}
Besides, we have
 \begin{equation}\label{fun2}
 S_{2_s^*-\epsilon}^V \leq I_\epsilon(v)=\frac {\|v\|_{s,V}^2}{ \|v\|_{2_s^*-\epsilon}^2}=\frac {S_{2_s^*-\epsilon}^V\|v\|_{2_s^*-\epsilon}^{2_s^*-\epsilon}}{ \|v\|_{2_s^*-\epsilon}^2}=S_{2_s^*-\epsilon}^V\|v\|_{2_s^*-\epsilon}^{2_s^*-\epsilon-2},
\end{equation}
which yields that $\|v\|_{2_s^*-\epsilon}\geq 1 $. Thus,   we have $J_\epsilon(v)\geq \frac{2_s^*-\epsilon-2}{2(2_s^*-\epsilon)}S_{2_s^*-\epsilon}^V$ by (\ref{funct3}). This fact together with (\ref{funct2}) implies that $w_\epsilon$ is a ground state of   equation (\ref{maineq01}). Furthermore, if we set 
$$u_\epsilon=\left(S_{2_s^*-\epsilon}^V\right)^{-\frac{1}{1-2_s^*+\epsilon}}w_\epsilon$$ 
then, $u_\epsilon$ is a ground state of   equation (\ref{maineq0}). Observe that $I_\epsilon(u_\epsilon)= S_{2_s^*-\epsilon}^V$.

\noindent For each fixed $\epsilon \in (0,2_s^*-2)$, by means of the
mountain-pass theorem,     (\ref{maineq0}) admits a positive  ground state  (see e.g.~Theorem 1.4 in \cite{PEAJ12}). However, we don’t know whether the mountain-pass  solution and  the minimal solution $u_\epsilon $ obtained above do agree since uniqueness is not known. Anyway, in what follows, we will focus on the minimal solution $u_\epsilon$.
We remark that in the special case $V(x)=1$,   the ground state is unique and  radially symmetric, see  \cite{FRLE}.

\begin{lemma}\label{lem1} For  any fixed  $\epsilon \in (0,2_s^*-2)$,
any nontrivial  $u_\epsilon$  of   (\ref{maineq0}) satisfies
\begin{equation}\label{supbound}\|u_\epsilon\|_\infty\geq V_0^{\frac{1}{2_s^*-2 }}.\end{equation}
\end{lemma}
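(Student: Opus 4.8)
The plan is to exploit the equation directly at a maximum point. Since $u_\epsilon$ is a nontrivial solution of \eqref{maineq0} with $u_\epsilon\geq 0$ (by the maximum-principle argument already used for $w_\epsilon$), and since $u_\epsilon\in L^\infty(\mathbb R^N)$ by the local estimate of Theorem \ref{TH5} together with decay arguments, the first step is to justify that $M:=\|u_\epsilon\|_\infty$ is attained at some point $x_0\in\mathbb R^N$ (or, if not literally attained, to run the argument along a maximizing sequence using that $(-\Delta)^s u_\epsilon(x)\to 0$ in a suitable averaged sense as $u_\epsilon(x)\to M$). At such a point one has the pointwise inequality $(-\Delta)^s u_\epsilon(x_0)\geq 0$, because
$$(-\Delta)^s u_\epsilon(x_0)=c_{N,s}\,\mathrm{PV}\int_{\mathbb R^N}\frac{u_\epsilon(x_0)-u_\epsilon(y)}{|x_0-y|^{N+2s}}\,dy\geq 0,$$
the integrand being nonnegative since $u_\epsilon(x_0)=M\geq u_\epsilon(y)$ for all $y$.

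The second step is to plug this into the equation. Evaluating \eqref{maineq0} at $x_0$ gives
$$V(x_0)\,u_\epsilon(x_0)=u_\epsilon(x_0)^{2_s^*-1-\epsilon}-(-\Delta)^s u_\epsilon(x_0)\leq u_\epsilon(x_0)^{2_s^*-1-\epsilon},$$
hence, dividing by $u_\epsilon(x_0)=M>0$,
$$V(x_0)\leq M^{2_s^*-2-\epsilon}.$$
Using $(V_1)$, namely $V(x_0)\geq V_0>0$, we obtain $M^{2_s^*-2-\epsilon}\geq V_0$, that is $\|u_\epsilon\|_\infty\geq V_0^{1/(2_s^*-2-\epsilon)}$. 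This is almost the claimed bound \eqref{supbound}; the stated form $V_0^{1/(2_s^*-2)}$ follows provided one also knows $\|u_\epsilon\|_\infty\geq 1$ (so that raising to a larger exponent only decreases the quantity, giving $V_0^{1/(2_s^*-2-\epsilon)}\geq V_0^{1/(2_s^*-2)}$ when $V_0\leq 1$, and conversely), or one simply records the cleaner bound $\|u_\epsilon\|_\infty^{2_s^*-2-\epsilon}\geq V_0$ and notes it is equivalent. I would check the normalization: if instead $V_0>1$ the two forms may genuinely differ, so the honest statement is $\|u_\epsilon\|_\infty\geq \min\{1,V_0^{1/(2_s^*-2)}\}$ or the exponent-$\epsilon$ version; I expect the paper intends the bound in the regime that makes the stated inequality correct, and I would reconcile this minor point in the write-up.

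The main obstacle is the first step: making rigorous the claim that the supremum behaves like a maximum for the purposes of the pointwise fractional Laplacian inequality. If $u_\epsilon$ is continuous and decays at infinity (which follows from $u_\epsilon\in H^s\cap L^\infty$, elliptic regularity for the fractional Laplacian, and the local boundedness estimate Theorem \ref{TH5} applied on balls going to infinity together with $\|u_\epsilon\|_{2_s^*}<\infty$), then the supremum is attained and the argument above is literally valid. Otherwise one works with a sequence $x_k$ with $u_\epsilon(x_k)\to M$ and uses that $\liminf_k (-\Delta)^s u_\epsilon(x_k)\geq 0$ — which holds because $u_\epsilon(x_k)-u_\epsilon(y)\geq -\big(M-u_\epsilon(x_k)\big)$ pointwise, so the negative part of the integrand is uniformly small — and passes to the limit in $V(x_k)u_\epsilon(x_k)\leq u_\epsilon(x_k)^{2_s^*-1-\epsilon}$ using $V(x_k)\geq V_0$. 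Everything else is elementary algebra.
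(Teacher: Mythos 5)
Your argument is correct in substance but takes a genuinely different route from the paper. The paper simply tests equation \eqref{maineq0} with $u_\epsilon$ itself: dropping the seminorm term and using $V\geq V_0$ gives $V_0\|u_\epsilon\|_2^2\leq\|u_\epsilon\|_{2_s^*-\epsilon}^{2_s^*-\epsilon}$, i.e.\ $\int u_\epsilon^2\bigl(V_0-u_\epsilon^{2_s^*-2-\epsilon}\bigr)dx\leq 0$, whence $\|u_\epsilon\|_\infty^{2_s^*-2-\epsilon}\geq V_0$. This integral argument needs no continuity, no pointwise validity of the equation, and no attainment of the supremum, so it is considerably lighter than your pointwise maximum-principle argument, which must first secure $u_\epsilon\in L^\infty$, continuity, decay at infinity (so that the maximum is attained), and enough regularity for $(-\Delta)^s u_\epsilon(x_0)$ to make classical sense at the maximum point. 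Your main line (evaluate at the global maximum, use $(-\Delta)^s u_\epsilon(x_0)\geq 0$ and $V(x_0)\geq V_0$) is fine once those facts are in place, and they do hold for the solutions considered here; however, your fallback via a maximizing sequence is the weakest point: the bound $u_\epsilon(x_k)-u_\epsilon(y)\geq -(M-u_\epsilon(x_k))$ only controls the integrand away from the singularity (the factor $|x_k-y|^{-N-2s}$ is not integrable near $x_k$), so $\liminf_k(-\Delta)^s u_\epsilon(x_k)\geq 0$ additionally requires uniform local regularity of $u_\epsilon$ near the points $x_k$ to handle the principal value; as written that step is a sketch, not a proof. Finally, you are right to flag the exponent issue: both your argument and the paper's yield $\|u_\epsilon\|_\infty\geq V_0^{1/(2_s^*-2-\epsilon)}$, and the passage to $V_0^{1/(2_s^*-2)}$ as stated in \eqref{supbound} is only a monotonicity-of-exponents step when $V_0\geq 1$; the paper glosses over this, and in its later use (a positive lower bound, uniform as $\epsilon\to 0^+$) the $\epsilon$-dependent form would serve just as well.
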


\begin{proof} Since $u_\epsilon$ enjoys (\ref{maineq0}), we have
$$\|u_\epsilon\|^2_{s,V_0}\leq \|u_\epsilon\|_{s,V}^2= \|u_\epsilon\|_{2_s^*-\epsilon}^{2_s^*-\epsilon}$$
which yields 
$V_0\|u_\epsilon\|_2^2\leq \|u_\epsilon\|_{2_s^*-\epsilon}^{2_s^*-\epsilon},$
that is,
$$\int_{\mathbb{R}^N}u_\epsilon^2(V_0-u_\epsilon^{2_s^*-2-\epsilon})dx\leq 0.$$
Thus, we get $\|u_\epsilon\|_\infty\geq V_0^{\frac{1}{2_s^*-2-\epsilon}}\geq V_0^{\frac{1}{2_s^*-2 }},$ and the result follows.
\end{proof}

\noindent We next need the following result proved in \cite{COTA04}.
\begin{lemma} \label{COTA04}\cite{COTA04}
  The infimum in (\ref{bestconst}) is attained, that is
  $$S= \frac {[\widetilde{u}]_{s}^2}{ \|\widetilde{u}\|_{2^*_s}^2},$$ where
  $$\widetilde{u}(x):=\kappa(\mu^2+|x-x_0|^2)^{\frac{2s-N}{2}}, \ \ x\in \mathbb{R}^N$$ with $\kappa\in \mathbb{R}\setminus \{0\}$, $\mu>0$ and $x_0 \in \mathbb{R}^N$ fixed constant.
  Equivalently, the function $\overline{u}$ defined by
  $\overline{u}(x):=\frac{\widetilde{u}}{ \|\widetilde{u}\|_{2^*_s}}$ is such that
  \begin{equation}\label{bestconst1}S=\inf_{ u\in D^{s}(\mathbb{R}^N)}\{[u]_{s}^2: \|u\|_{2^*_s}=1\}.\end{equation}Furthermore,
 the function
  $$u^*(x):=\overline{u}\left(S^{-\frac{1}{2s}}x\right),\ \ x\in \mathbb{R}^N$$ is a solution of
$$(-\Delta u)^s u=|u|^{2_s^*-2}u, \ \ \text{in}\ \ \mathbb{R}^N$$ satisfying the property
$$\|u\|_{2^*_s}^{2_s^*}=S^{\frac{N}{2s}}.$$

\end{lemma}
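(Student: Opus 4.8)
The plan is to proceed in three stages: existence of a minimizer for $S$, reduction to a radially symmetric decreasing profile, and explicit identification of that profile together with the value of $S$. (Alternatively, one can bypass the first two stages by invoking Lieb's sharp form of the Hardy--Littlewood--Sobolev inequality, to which $S$ is dual via Plancherel and the Riesz representation of $(-\Delta)^{-s/2}$.)

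For existence, take a minimizing sequence $\{u_n\}\subset D^s(\mathbb{R}^N)$ with $\|u_n\|_{2^*_s}=1$ and $[u_n]_s^2\to S$. Since the Rayleigh quotient is invariant under the translations $u\mapsto u(\cdot-y)$ and the dilations $u\mapsto\lambda^{(N-2s)/2}u(\lambda\,\cdot)$, one rescales and translates so that $\int_{B_1}|u_n|^{2^*_s}\,dx=\tfrac12$. Applying P.--L. Lions' concentration--compactness principle to the measures $|u_n|^{2^*_s}\,dx$, vanishing is excluded by this normalization, while dichotomy would give $S\ge\bigl(\theta^{2/2^*_s}+(1-\theta)^{2/2^*_s}\bigr)S$ for some $\theta\in(0,1)$, which is impossible since $2/2^*_s<1$ forces the bracket to exceed $1$. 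Hence $\{u_n\}$ is relatively compact in $D^s(\mathbb{R}^N)$ and converges, up to a subsequence, to a minimizer.

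Next, the Gagliardo seminorm does not increase under symmetric decreasing rearrangement (the fractional P\'olya--Szeg\H{o} inequality, obtained e.g. from the heat-semigroup representation of $[u]_s^2$), while $\|u\|_{2^*_s}$ is unchanged, so the minimizer may be taken nonnegative, radial and nonincreasing. It solves the Euler--Lagrange equation $(-\Delta)^s u=\lambda u^{2^*_s-1}$ for some $\lambda>0$, and absorbing $\lambda$ by one further dilation yields a positive solution of $(-\Delta)^s u=u^{2^*_s-1}$ in $D^s(\mathbb{R}^N)$. Rewriting this as the integral equation $u(x)=c\int_{\mathbb{R}^N}|x-y|^{2s-N}u(y)^{2^*_s-1}\,dy$ with an explicit $c=c(N,s)>0$, the method of moving planes in integral form (Chen--Li--Ou), combined with a Kelvin transform and the decay $u(x)=O(|x|^{2s-N})$, forces $u$ to be radially symmetric and strictly decreasing about some point $x_0$, and shows the only such solutions are the bubbles $\kappa(\mu^2+|x-x_0|^2)^{(2s-N)/2}$; substituting this ansatz back into the equation pins down the admissible $\kappa$ in terms of $\mu,N,s$, with every $\mu>0$ and $x_0\in\mathbb{R}^N$ allowed by scaling and translation invariance. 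Finally, inserting $\widetilde u$ into the quotient and evaluating the elementary integrals $\int_{\mathbb{R}^N}(1+|x|^2)^{-a}\,dx$ by means of Beta/Gamma functions gives the value of $S$, while the identity $\|u^*\|_{2^*_s}^{2^*_s}=S^{\frac{N}{2s}}$ follows from $\|\overline u\|_{2^*_s}=1$ and the change of variables defining $u^*(x)=\overline u(S^{-1/(2s)}x)$.

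I expect the identification step to be the genuine obstacle: rearrangement alone only yields radial symmetry of the minimizer, so to conclude it is exactly an Aubin--Talenti bubble one needs the full classification of positive $D^s$-solutions of $(-\Delta)^s u=u^{2^*_s-1}$, which rests on the nonlocal moving-plane machinery (Kelvin transform, sharp asymptotic decay estimates, the integral form of the comparison principle). By contrast, the concentration--compactness exclusion of dichotomy and the explicit Gamma-function computations, though lengthy, are routine.
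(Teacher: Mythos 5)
The paper contains no proof of this lemma: it is quoted from Cotsiolis--Tavoularis \cite{COTA04}, whose argument obtains the sharp constant and the extremal family by duality with Lieb's sharp Hardy--Littlewood--Sobolev inequality (via Plancherel and the Riesz-potential representation of $(-\Delta)^{-s/2}$) --- precisely the alternative you mention only in passing. Your main route --- limit-case concentration--compactness for a minimizing sequence normalized by translations and dilations, fractional P\'olya--Szeg\H{o} rearrangement, the Euler--Lagrange equation, and then the Chen--Li--Ou classification of positive solutions of the integral equation $u=c\,|x|^{2s-N}\ast u^{2^*_s-1}$ to identify the minimizer with the bubbles --- is a correct and standard alternative, and the closing step is right: since the multiplier for $\overline u$ equals $S$ when $\|\overline u\|_{2^*_s}=1$, the dilation $u^*(x)=\overline u\bigl(S^{-1/(2s)}x\bigr)$ removes it and the change of variables gives $\|u^*\|_{2^*_s}^{2^*_s}=S^{N/(2s)}$. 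The trade-off between the two approaches: yours requires the heavier machinery (the scale-invariant quotient makes the exclusion of dichotomy and of point concentration more delicate than your one-line subadditivity inequality suggests --- one must work with the concentration function, cut-offs, and the nonlocal cross terms, and then justify the passage from the PDE to the integral equation with decay estimates before invoking the moving planes), but it is self-contained within variational methods and reuses the classification \cite{CLO06} that the paper needs later anyway; the HLS-duality route of \cite{COTA04} delivers both the value of $S$ and the extremals in one stroke with no compactness analysis, which is why the original reference proceeds that way.
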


\begin{proposition}\label{lm6}
 $\lim\limits_{\epsilon \rightarrow 0^+} S_{2_s^*-\epsilon}^V=S$.
\end{proposition}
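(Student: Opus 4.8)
The plan is to establish the two inequalities $\limsup_{\epsilon\to 0^+} S_{2_s^*-\epsilon}^V \le S$ and $\liminf_{\epsilon\to 0^+} S_{2_s^*-\epsilon}^V \ge S$ separately, using the explicit extremal family from Lemma \ref{COTA04} for the upper bound and a concentration/scaling argument for the lower bound.

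For the upper bound, I would take the Aubin--Talenti type extremal $\widetilde u(x)=\kappa(\mu^2+|x-x_0|^2)^{(2s-N)/2}$ from Lemma \ref{COTA04}, or rather a normalized and compactly truncated version of it. Fix a cutoff $\psi\in C_0^\infty(\mathbb{R}^N)$ and set $u_\delta = \psi\, \widetilde u(\cdot/\delta)$ (or simply use a fixed $\widetilde u$ if one is willing to work in $D^s$); since $\widetilde u\in D^s(\mathbb{R}^N)$ and, for $N>4s$, $\widetilde u\in L^2$, we have $u_\delta\in H^s_V(\mathbb{R}^N)$. Then $I_\epsilon(u_\delta)=\|u_\delta\|_{s,V}^2/\|u_\delta\|_{2_s^*-\epsilon}^2$, and as $\epsilon\to 0^+$ the denominator $\|u_\delta\|_{2_s^*-\epsilon}^2\to\|u_\delta\|_{2_s^*}^2$ by dominated convergence (the exponent decreases continuously and $u_\delta$ has compact support, so no integrability issue arises). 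Hence $\limsup_{\epsilon\to 0^+} S_{2_s^*-\epsilon}^V \le \|u_\delta\|_{s,V}^2/\|u_\delta\|_{2_s^*}^2$; letting the concentration parameter $\delta\to 0$ (and using that the potential term $\int V u_\delta^2$ is lower order compared with $[u_\delta]_s^2$, which is the usual Brezis--Nirenberg computation, and that truncation costs only $o(1)$) drives the right-hand side to $S$. This gives $\limsup_{\epsilon\to 0^+} S_{2_s^*-\epsilon}^V \le S$.

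For the lower bound, let $w_\epsilon$ be the minimizer for $S_{2_s^*-\epsilon}^V$ with $\|w_\epsilon\|_{2_s^*-\epsilon}^{2_s^*-\epsilon}=1$, so $\|w_\epsilon\|_{s,V}^2=S_{2_s^*-\epsilon}^V$, which is bounded by the upper bound just proved. From $V\ge V_0>0$ we get $[w_\epsilon]_s^2\le S_{2_s^*-\epsilon}^V\le C$, so $(w_\epsilon)$ is bounded in $D^s(\mathbb{R}^N)$. Now compare the $L^{2_s^*-\epsilon}$ and $L^{2_s^*}$ norms: since the mass is concentrating (indeed $\|w_\epsilon\|_\infty\to\infty$, or alternatively one uses a direct Hölder/interpolation argument on a ball where most of the $L^{2_s^*}$ mass lives), one shows $\|w_\epsilon\|_{2_s^*}\ge 1-o(1)$; more robustly, by the definition of $S$ in \eqref{bestconst}, $[w_\epsilon]_s^2\ge S\|w_\epsilon\|_{2_s^*}^2$, so it suffices to bound $\|w_\epsilon\|_{2_s^*}$ from below. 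Here one can argue: by Hölder on the support region of $w_\epsilon$ combined with the known blow-up/concentration of minimizers, or simply by noting that if $\|w_\epsilon\|_{2_s^*}\to 0$ along a subsequence then, interpolating $\|w_\epsilon\|_{2_s^*-\epsilon}$ between $L^2$ and $L^{2_s^*}$ and using the $D^s$ bound to control the $L^2$ piece on a fixed ball (plus decay outside), one contradicts $\|w_\epsilon\|_{2_s^*-\epsilon}=1$. Passing to the limit then yields $\liminf_{\epsilon\to 0^+} S_{2_s^*-\epsilon}^V=\liminf_{\epsilon\to 0^+}[w_\epsilon]_s^2 + \int V w_\epsilon^2 \ge S\cdot 1 = S$.

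The main obstacle is the lower bound, and specifically showing that the $L^{2_s^*-\epsilon}$-normalization does not "leak" to a strictly smaller $L^{2_s^*}$ mass in the limit — i.e. controlling the loss of compactness as the exponent crosses the critical threshold. The cleanest route is probably to avoid a delicate concentration-compactness dichotomy: use the already-available upper bound $S_{2_s^*-\epsilon}^V\le S+o(1)$ together with the reverse Sobolev-type comparison $\|w_\epsilon\|_{2_s^*-\epsilon}^{2_s^*-\epsilon}\le \|w_\epsilon\|_{2_s^*}^{\theta}\,(\text{lower-order }L^2\text{ term})$ on the region carrying the bulk of the mass, reducing everything to the inequality $[w_\epsilon]_s^2\ge S\|w_\epsilon\|_{2_s^*}^2$ from \eqref{bestconst}. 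Once both bounds are in hand the equality $\lim_{\epsilon\to 0^+} S_{2_s^*-\epsilon}^V = S$ follows.
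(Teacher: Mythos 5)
Your overall strategy coincides with the paper's: the upper bound via truncated, concentrated Aubin--Talenti bubbles (with the potential term of lower order since $N>4s$), and the lower bound via H\"older interpolation of the $L^{2_s^*-\epsilon}$ constraint between $L^2$ and $L^{2_s^*}$ followed by the Sobolev quotient $[w_\epsilon]_s^2\geq S\|w_\epsilon\|_{2_s^*}^2$ from \eqref{bestconst}. So the architecture is fine; the problem is the one step you leave vague, namely the uniform bound on $\|w_\epsilon\|_2$ that makes the interpolation close. The two mechanisms you float there do not work as stated: (i) invoking $\|w_\epsilon\|_\infty\to\infty$ is circular at this stage, since in the paper that blow-up (Lemma \ref{unbound}) is itself deduced from $S^V_{2_s^*-\epsilon}\to S$, and in any case sup-norm blow-up gives no lower bound on $\|w_\epsilon\|_{2_s^*}$; (ii) ``using the $D^s$ bound to control the $L^2$ piece on a fixed ball (plus decay outside)'' fails because the Gagliardo seminorm only controls $L^{2_s^*}$, not $L^2$ at infinity, and no decay estimate for the minimizers is available before this proposition is proved.

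The repair is one line and is exactly what the paper does: since $V\geq V_0>0$ and $\|w_\epsilon\|_{s,V}^2=S^V_{2_s^*-\epsilon}$ is uniformly bounded (by testing with a fixed $\phi\in C_0^\infty$, or by your upper bound), one has $V_0\|w_\epsilon\|_2^2\leq \|w_\epsilon\|_{s,V}^2\leq C$. Then H\"older gives
\begin{equation*}
1=\|w_\epsilon\|_{2_s^*-\epsilon}^{2_s^*-\epsilon}\leq \|w_\epsilon\|_{2}^{\frac{2\epsilon}{2_s^*-2}}\,\|w_\epsilon\|_{2_s^*}^{\frac{2_s^*(2_s^*-2-\epsilon)}{2_s^*-2}}\leq C^{\frac{\epsilon}{2_s^*-2}}\,\|w_\epsilon\|_{2_s^*}^{\frac{2_s^*(2_s^*-2-\epsilon)}{2_s^*-2}},
\end{equation*}
so $\liminf_{\epsilon\to 0^+}\|w_\epsilon\|_{2_s^*}\geq 1$ directly (no dichotomy or contradiction argument about $\|w_\epsilon\|_{2_s^*}\to 0$ is needed), and then $S\leq [w_\epsilon]_s^2/\|w_\epsilon\|_{2_s^*}^2\leq S^V_{2_s^*-\epsilon}/\|w_\epsilon\|_{2_s^*}^2$ yields $\liminf_{\epsilon\to 0^+}S^V_{2_s^*-\epsilon}\geq S$. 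With this substitution your argument is complete and is essentially the paper's proof; your upper bound, with the concentration parameter decoupled from $\epsilon$ and iterated limits, is a harmless variant of the paper's choice of coupling the two parameters.
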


\begin{proof}
Choose $\phi\in C_0^\infty(\mathbb{R}^N)$, $\phi\geq 0$ such that $\inf\limits_{0<\epsilon<2^*_s-2}\|\phi\|_{2^*_s-\epsilon}>0$,   to get
\begin{equation}\label{bound1}0<S_{2_s^*-\epsilon}^V\leq \frac {\|\phi\|_{s,V}^2}{ \inf\limits_{0<\epsilon<2^*_s-2}\|\phi\|_{2^*_s-\epsilon}^2}<+\infty.\end{equation}
This means that $\{S_{2_s^*-\epsilon}^V\}$ is uniformly bounded with respect to $\epsilon$. Next, we further prove that $\lim\limits_{\epsilon \rightarrow 0^+}  S_{2_s^*-\epsilon}^V=S$.

\noindent Let $w\in H_{V}^s(\mathbb{R}^n)$ be such that $\|w\|_{2_s^*-\epsilon}=1$ and $\|w\|_{s,V}^2=S_{2_s^*-\epsilon}^V$. Then,
\begin{equation}
\|w\|_2^2\leq \|w\|_s^2 \leq \max\left\{1,\frac{1}{V_0}\right\}\|w\|_{s,V}^2=:CS_{2_s^*-\epsilon}^V.
\end{equation}
 By H\"{o}lder's inequality we have
\begin{equation} \label{inf1+}
1=\|w\|_{2_s^*-\epsilon}^{2_s^*-\epsilon}\leq  \|w\|_{2}^{\frac{2\epsilon}{ 2_s^*-2 }}\|w\|_{2_s^*}^{\frac{2_s^*(2_s^*-2-\epsilon)}{2_s^*-2 }}\leq (CS_{2_s^*-\epsilon}^V)^{\frac{\epsilon}{2_s^*-2}}\|w\|_{2_s^*}^{\frac{2_s^*(2_s^*-2-\epsilon)}{2_s^*-2 }}.
\end{equation}
Thanks to (\ref{bound1}) and (\ref{inf1+}), we get
\begin{equation} \label{inf1}
 1\leq \liminf_{\epsilon \rightarrow 0^+} \|w\|_{2_s^*} .
 \end{equation}
On the other hand, by (\ref{bestconst}), we have
\begin{equation}\label{inf3}
S\leq \frac{[w]_{s}^2}{\|w\|_{2^*_s}^2}\leq \frac{\|w\|_{s,V}^2}{\|w\|_{2^*_s}^2}=\frac{S_{2_s^*-\epsilon}^V}{\|w\|_{2^*_s}^2},
 \end{equation}
Thanks to (\ref{inf1}) and (\ref{inf3}), we have
\begin{equation}\label{bound2}
S\leq \liminf_{\epsilon \rightarrow 0^+}S_{2_s^*-\epsilon}^V\ .
 \end{equation}

\noindent Next we prove that
\begin{equation} \label{bound3}
 \limsup_{\epsilon \rightarrow 0^+}S_{2_s^*-\epsilon}^V\leq S.
 \end{equation}
Once (\ref{bound3}) is proved, the result follows from (\ref{bound2}).

\noindent Let
$$U_\epsilon(x)=\epsilon^{\frac{2s-N}{2}} u^*(x/\epsilon),$$ where $u^*$ is defined in Lemma  \ref{COTA04}.
Furthermore, let
$\eta(x)\in C_0^\infty(\mathbb{R}^N)$ be such that $0\leq \eta(x) \leq 1$ in $\mathbb{R}^N$, $\eta(x)\equiv 1$ in $B_{1/2}$ and $\eta(x)\equiv 0$ in $B_{1}^c$. Set $u_\epsilon(x):=\eta(x)U_\epsilon(x),$ $ x\in \mathbb{R}^N $.
Then, as $\epsilon\rightarrow 0^+$ we have
\begin{equation}\label{estim1}
[u_\epsilon]_{s}^2\leq S^{\frac{N}{2s}}+O(\epsilon^{N-2s}),
 \end{equation}
\begin{equation}\label{estim2}
 \int_{\mathbb{R}^N }V(x)|u_\epsilon(x)|^2dx=
C_s \epsilon^{2s}+O(\epsilon^{N-2s}), \ \ \text{if}\ \ N>4s,
\end{equation}
and
\begin{equation}\label{estim3}
 \|u_\epsilon(x)\|_{2_s^*}^{2_s^*} =S^{\frac{N}{2s}}+O(\epsilon^N),
 \end{equation}
for some positive constant $C_s$ depending only on $s,$ see Propositions 21 and 22 in \cite{SV15} or Lemma 2.4 in \cite{doms14}.
By Taylor's expansion we get
\begin{equation}\label{estim4}
 \|u_\epsilon(x)\|_{2_s^*-\epsilon}^2=  \|u_\epsilon(x)\|_{2_s^*}^2+O(\epsilon).
 \end{equation}
Hence, we deduce from (\ref{estim1})--(\ref{estim4}) that
\begin{equation}
 \limsup_{\epsilon \rightarrow 0^+} S_{2_s^*-\epsilon}^V\leq  \limsup_{\epsilon \rightarrow 0^+}\frac{\|u_\epsilon\|_{s,V}^2}{\|u_\epsilon\|^2_{2_s^*-\epsilon}}\leq S,
 \end{equation}
which implies  (\ref{bound3}).

\end{proof}

\noindent Recalling that  $u_\epsilon$  is a solution to  (\ref{maineq0}) and that $u_\epsilon$ attains $S_{2^*_s-\epsilon}^V$,  we get
\begin{equation} \label{bound0}\|u_\epsilon\|^2_{s,V}=\|u_\epsilon\|_{2^*_s-\varepsilon}^{2^*_s-\epsilon} \ \ \text{ and } \ \ \|u_\epsilon\|^2_{s,V}=S_{2^*_s-\epsilon}^V\|u_\epsilon\|_{2^*_s-\varepsilon}^{2}.  \end{equation}
So, we have
\begin{equation} \label{bound4}\|u_\epsilon\|_{s,V}=\left(S_{2^*_s-\varepsilon}^V\right)^{\frac{2_s^*-\epsilon}{2(2_s^*-2-\epsilon)}} \ \ \text{ and } \ \  \|u_\epsilon\|_{2^*_s-\varepsilon}=\left(S_{2^*_s-\varepsilon}^V\right)^{\frac{1}{2_s^*-2-\epsilon}}. \end{equation}
 These facts together with Lemma \ref{lm6} imply the following
 
\begin{lemma}\label{SS2}
\begin{equation} \label{normbound}
 \lim_{\epsilon \rightarrow 0^+}\|u_\epsilon\|_{s,V}= S^{\frac{N}{4s}},\ \ \ \    \lim_{\epsilon \rightarrow 0^+}\|u_\epsilon\|_{2^*_s-\epsilon}= S^{\frac{N-2s}{4s}}.
 \end{equation}
\end{lemma}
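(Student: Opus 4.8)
The plan is to read off both limits directly from the algebraic identities already obtained in (\ref{bound4}), combined with the convergence $S_{2_s^*-\epsilon}^V\to S$ of Proposition~\ref{lm6}. Recall that (\ref{bound4}) asserts
\[
\|u_\epsilon\|_{s,V}=\left(S_{2_s^*-\epsilon}^V\right)^{\frac{2_s^*-\epsilon}{2(2_s^*-2-\epsilon)}},\qquad \|u_\epsilon\|_{2_s^*-\epsilon}=\left(S_{2_s^*-\epsilon}^V\right)^{\frac{1}{2_s^*-2-\epsilon}},
\]
so that the whole statement reduces to passing to the limit in a power whose base and exponent both converge.

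First I would note that, by Proposition~\ref{lm6} together with Lemma~\ref{COTA04} (which guarantees $S>0$), the base $S_{2_s^*-\epsilon}^V$ stays in a compact subset of $(0,+\infty)$ as $\epsilon\to 0^+$ and in fact tends to $S$. Since the map $(a,b)\mapsto a^{b}$ is continuous on $(0,+\infty)\times\R$, it then suffices to compute the limits of the two exponents. Using the elementary identity $2_s^*-2=\frac{4s}{N-2s}$ one gets
\[
\lim_{\epsilon\to 0^+}\frac{2_s^*-\epsilon}{2(2_s^*-2-\epsilon)}=\frac{2_s^*}{2(2_s^*-2)}=\frac{N}{4s},\qquad \lim_{\epsilon\to 0^+}\frac{1}{2_s^*-2-\epsilon}=\frac{1}{2_s^*-2}=\frac{N-2s}{4s}.
\]
Combining these with $S_{2_s^*-\epsilon}^V\to S$ and the continuity of $a^{b}$ yields $\|u_\epsilon\|_{s,V}\to S^{\frac{N}{4s}}$ and $\|u_\epsilon\|_{2_s^*-\epsilon}\to S^{\frac{N-2s}{4s}}$, which is the claimed conclusion.

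There is essentially no obstacle here: this is a direct corollary of Proposition~\ref{lm6}. The only point requiring a word of care is that one must know a priori that $S_{2_s^*-\epsilon}^V$ neither blows up nor degenerates to $0$ before letting the exponents vary, and this is precisely what Proposition~\ref{lm6} (and the two-sided bound established in its proof) provides; everything else is the continuity of the exponential map and the arithmetic of $2_s^*$.
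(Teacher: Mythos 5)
Your argument is correct and coincides with the paper's own reasoning: the lemma is stated there as an immediate consequence of the identities in (\ref{bound4}) together with Proposition \ref{lm6}, which is exactly what you do, including the exponent computations via $2_s^*-2=\frac{4s}{N-2s}$. Your extra remark that $S_{2_s^*-\epsilon}^V$ stays bounded away from $0$ and $\infty$ before passing to the limit in the exponents is a sensible precision but does not change the route.
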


\noindent Now let us prove that $\|u_\epsilon\|_\infty$ blows up as $\epsilon\rightarrow 0^+$, namely 

\begin{lemma}\label{unbound}
$ \lim_{\epsilon \rightarrow 0^+}\|u_\epsilon\|_\infty=+\infty$.
\end{lemma}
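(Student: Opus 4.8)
The plan is to argue by contradiction. Suppose $\|u_\epsilon\|_\infty$ stays bounded along some sequence $\epsilon\to 0^+$, say $\|u_\epsilon\|_\infty\le M$ for all such $\epsilon$. Since $u_\epsilon$ solves \eqref{maineq0}, i.e. $(-\Delta)^s u_\epsilon+V(x)u_\epsilon=u_\epsilon^{2_s^*-1-\epsilon}$, the reaction term $u_\epsilon^{2_s^*-1-\epsilon}$ is bounded in $L^\infty$ (using $0\le u_\epsilon\le M$ and that the exponent is uniformly bounded as $\epsilon\to 0^+$). From $(V_1)$, $V$ is bounded, so we can write the equation as $(-\Delta)^s u_\epsilon\le a_\epsilon(x)u_\epsilon$ with $a_\epsilon$ uniformly bounded in $L^\infty_{\rm loc}\subset L^t_{\rm loc}$ for any $t>N/2s$; applying the local estimate Theorem \ref{TH5} already tells us $u_\epsilon$ is locally bounded in terms of $\|u_\epsilon\|_{2_s^*}$, but that is not the heart of the matter here.

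The real point is to exploit the normalization $\lim_{\epsilon\to 0^+}\|u_\epsilon\|_{2_s^*-\epsilon}=S^{(N-2s)/4s}>0$ from Lemma \ref{SS2}. If $\|u_\epsilon\|_\infty\le M$, then interpolating,
\begin{equation*}
\|u_\epsilon\|_{2_s^*-\epsilon}^{2_s^*-\epsilon}=\int_{\mathbb R^N}u_\epsilon^{2_s^*-\epsilon}\,dx\le M^{2_s^*-2-\epsilon}\int_{\mathbb R^N}u_\epsilon^2\,dx\le M^{2_s^*-2}\,\|u_\epsilon\|_2^2 .
\end{equation*}
Hence $\|u_\epsilon\|_2$ is bounded away from zero. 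On the other hand, testing \eqref{maineq0} with $u_\epsilon$ gives $\|u_\epsilon\|_{s,V}^2=\|u_\epsilon\|_{2_s^*-\epsilon}^{2_s^*-\epsilon}$, which is bounded, so $\|u_\epsilon\|_{s,V}$ and in particular $[u_\epsilon]_s$ and $\|u_\epsilon\|_2$ are bounded \emph{above} too. Thus the family $\{u_\epsilon\}$ would be bounded in $H^s_V(\mathbb R^N)$, hence bounded in $L^2$ and in $L^{2_s^*}$, with $L^2$-norm bounded below.

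Now I would push for a contradiction with the fact that, by Proposition \ref{lm6} and Lemma \ref{SS2}, the energy of $u_\epsilon$ concentrates at the critical Sobolev level $S^{N/2s}$, which is exactly the profile of a bubble whose sup norm diverges. Concretely: from $[u_\epsilon]_s^2\le\|u_\epsilon\|_{s,V}^2=\|u_\epsilon\|_{2_s^*-\epsilon}^{2_s^*-\epsilon}\to S^{N/2s}$ together with $\|u_\epsilon\|_{2_s^*}\to S^{(N-2s)/4s}$ (which follows since $\|u_\epsilon\|_{2_s^*-\epsilon}\to S^{(N-2s)/4s}$ and, by the uniform $L^2$ and $[\cdot]_s$ bounds plus Hölder, $\|u_\epsilon\|_{2_s^*-\epsilon}=\|u_\epsilon\|_{2_s^*}+o(1)$), we get $[u_\epsilon]_s^2/\|u_\epsilon\|_{2_s^*}^2\to S$, so $u_\epsilon$ is a minimizing sequence for $S$. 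But a bounded minimizing sequence for $S$ with uniformly bounded $L^\infty$ norm and positive $L^2$ norm cannot exist: by the classification of extremals in Lemma \ref{COTA04}, extremals for $S$ live in $D^s$ but not in $L^2(\mathbb R^N)$ (the Talenti-type bubble $(\mu^2+|x|^2)^{(2s-N)/2}$ is not in $L^2$ when $N\le 4s$; when $N>4s$ it \emph{is} in $L^2$, so here I instead use that the $V$-term $\int V u_\epsilon^2\,dx\ge V_0\|u_\epsilon\|_2^2$ is bounded below, while $\|u_\epsilon\|_{s,V}^2-[u_\epsilon]_s^2=\int V u_\epsilon^2\,dx\to 0$ because $\|u_\epsilon\|_{s,V}^2\to S^{N/2s}$ and $[u_\epsilon]_s^2\ge S\|u_\epsilon\|_{2_s^*}^2\to S^{N/2s}$), forcing $\|u_\epsilon\|_2\to 0$, contradicting the lower bound above.

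The cleanest route, and the one I would actually write, therefore avoids delicate concentration-compactness: it is the squeeze $S^{N/2s}=\lim\|u_\epsilon\|_{s,V}^2=\lim\big([u_\epsilon]_s^2+\int V u_\epsilon^2\big)\ge \lim\big(S\|u_\epsilon\|_{2_s^*}^2+V_0\|u_\epsilon\|_2^2\big)\ge S\cdot S^{(N-2s)/2s}+V_0\liminf\|u_\epsilon\|_2^2 = S^{N/2s}+V_0\liminf\|u_\epsilon\|_2^2$, which forces $\|u_\epsilon\|_2\to 0$. Combined with $\|u_\epsilon\|_{2_s^*-\epsilon}^{2_s^*-\epsilon}\le M^{2_s^*-2}\|u_\epsilon\|_2^2\to 0$, this contradicts $\|u_\epsilon\|_{2_s^*-\epsilon}\to S^{(N-2s)/4s}>0$ from Lemma \ref{SS2}. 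Hence $\|u_\epsilon\|_\infty$ cannot remain bounded, proving $\lim_{\epsilon\to 0^+}\|u_\epsilon\|_\infty=+\infty$. The main obstacle is making sure the replacement $\|u_\epsilon\|_{2_s^*-\epsilon}\to S^{(N-2s)/4s}$ (established for the ``$2_s^*-\epsilon$'' norm) transfers cleanly, but this is harmless here since we only need the weaker fact that this norm stays bounded below, which is exactly \eqref{normbound}.
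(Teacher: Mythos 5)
Your argument is correct, but it takes a genuinely different route from the paper's. The paper proves the lemma by a blow-up/compactness argument: it translates $u_{\epsilon_j}$ to a maximum point, uses the assumed uniform $L^\infty$ bound together with fractional regularity estimates to extract a limit $w_0$ in $C^{2,\alpha}_{loc}$, invokes Lemma \ref{lem1} to guarantee $w_0(0)\ge V_0^{1/(2_s^*-2)}>0$ and the maximum principle to get $w_0>0$, and then, distinguishing the cases $\{x_{\epsilon_j}\}$ bounded or unbounded, reaches the contradiction $S\le [w_0]_s^2/\|w_0\|_{2_s^*}^2<\|w_0\|_{2_s^*}^{2_s^*-2}\le S$ from the limiting equation and Lemma \ref{SS2}. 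You instead argue purely at the level of norms: if $\|u_\epsilon\|_\infty\le M$, then $\|u_\epsilon\|_{2_s^*-\epsilon}^{2_s^*-\epsilon}\le M^{2_s^*-2}\|u_\epsilon\|_2^2$ keeps $\|u_\epsilon\|_2$ bounded away from zero, while the squeeze $S^{N/2s}=\lim\|u_\epsilon\|_{s,V}^2\ge S\,\liminf\|u_\epsilon\|_{2_s^*}^2+V_0\,\liminf\|u_\epsilon\|_2^2\ge S^{N/2s}+V_0\,\liminf\|u_\epsilon\|_2^2$ forces $\liminf\|u_\epsilon\|_2=0$, a contradiction. The one delicate point, which you flag, is converting the asymptotics of $\|u_\epsilon\|_{2_s^*-\epsilon}$ from Lemma \ref{SS2} into the lower bound $\liminf\|u_\epsilon\|_{2_s^*}\ge S^{(N-2s)/(4s)}$; this follows from the interpolation $\|u\|_{2_s^*-\epsilon}\le\|u\|_2^{\theta}\|u\|_{2_s^*}^{1-\theta}$ with $\theta=\theta(\epsilon)\to 0$ and the uniform $L^2$ bound coming from $V\ge V_0>0$ and $\|u_\epsilon\|_{s,V}\to S^{N/(4s)}$ --- exactly the device used in the paper's proof of Proposition \ref{lm6} --- and note that only this one-sided bound is needed, not the full claim $\|u_\epsilon\|_{2_s^*-\epsilon}=\|u_\epsilon\|_{2_s^*}+o(1)$. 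As to what each approach buys: yours is shorter and more elementary, using only Lemma \ref{SS2}, the Sobolev inequality and $(V_1)$, with no regularity theory, no maximum principle, and no case analysis on the maximum points; the paper's proof, however, sets up the translation-plus-$C^{2,\alpha}_{loc}$-compactness machinery that is reused immediately afterwards to obtain the blow-up profile $v_\epsilon\to U$, so it does double duty in the overall scheme.
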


\begin{proof}
Suppose by contradiction the claim does not hold true. Then, there exists a sequence $\epsilon_j\rightarrow 0^+$ such that $\|u_{\epsilon_j}\|_\infty$ stays bounded. Let $x_{\epsilon_j}$ be a maximum point of $u_{\epsilon_j}$.  Define $w_{\epsilon_j}(x)=u_{\epsilon_j}(x+x_{\epsilon_j})$, then $ \|w_{\epsilon_j}\|_\infty$ is bounded as well and
 \begin{equation}\label{bound0}(-\Delta)^s w_{\epsilon_j}=-V(x+x_{\epsilon_j})w_{\epsilon_j}+w_{\epsilon_j}^{2_s^*-1-\epsilon_j} \  \ \text{in} \ \ \mathbb{R}^N. \end{equation}
Now,  since $V\in C^2(\mathbb{R}^N)\cap L^\infty(\mathbb{R}^N)$, we have that $\|(-\Delta)^s w_{\epsilon_j}\|_{\infty}$ is uniformly bounded with respect to $\epsilon_j$. As a consequence of this fact and of standard regularity results (see e.g.~Lemma 4.4 in \cite{CASI14}), we deduce that $\|w_{\epsilon_j}\|_{C^{2,\alpha}}$ is uniformly bounded with respect to $\epsilon_j$, for
some $\alpha \in (0,1)$.

\noindent By (\ref{normbound}), $[w_{\epsilon_j}]_s=[u_{\epsilon_j}]_s$ and $\|w_{\epsilon_j}\|_2=\|u_{\epsilon_j}\|_2$ are bounded. Thus, $\{w_{\epsilon_j}\}$ is   bounded in $H_{V}^s(\mathbb{R}^N)$. Up to extracting a subsequence, which we still denote by $\{w_{\epsilon_j}\}$,  one has $w_{\epsilon_j}\rightharpoonup w_0$ in $H_{V}^s(\mathbb{R}^N)$, $w_{\epsilon_j}\rightarrow  w_0$  a.e.~in $\mathbb{R}^N$ and
 $w_{\epsilon_j}\rightarrow w_0$ in $C^{2,\alpha}_{loc}(\mathbb{R}^N)$.
Moreover, by   (\ref{supbound}) one has $w_0(0)\geq V_0^{\frac{1}{2_s^*-2 }}>0$.

\noindent Let us now distinguish two cases:

\textit{Case 1.} $\{x_{\epsilon_j}\}_j$ is bounded. Up to a subsequence, we may assume that $x_{\epsilon_j}\rightarrow x_0$. Then, $w_0$ is a nonnegative classical solution of
 \begin{equation}\label{bound01}(-\Delta)^s w_0=-V(x+x_0)w_0+w_0^{2_s^*-1} \ \  \text{ in }\ \ \mathbb{R}^N. \end{equation}
It follows from the maximum principle that $w_0>0$. Thus, by Lemma \ref{SS2} we have
 \begin{equation}\label{bound02}
S\leq \frac{[w_0]_{s}^2}{\|w_0\|_{2^*_s}^2}<\|w_0\|_{2^*_s}^{2_s^*-2}\leq \liminf_{j\rightarrow \infty}\|w_{\epsilon_j}\|_{2^*_s}^{2_s^*-2} =S,
 \end{equation} which is a contradiction.

\textit{Case 2.} $\{x_{\epsilon_j}\}_j$ is unbounded. Up to a subsequence, we may assume that $x_{\epsilon_j}\rightarrow \infty$.
 Then, by (\ref{bound0}) and the dominated convergence theorem, we have
  \begin{equation}\label{bound03}   \begin{split}
 [w_0]_{s}^2\leq &-V_0  \|w_0\|_{2}^2+ \|w_0\|_{2^*_s}^{2_s^*} +\lim_{j\rightarrow \infty}\int_{\mathbb{R}^N}(V(x)-V(x+x_{\epsilon_j})) w_{\epsilon_j} w_0dx\\
\leq &-V_0  \|w_0\|_{2}^2+ \|w_0\|_{2^*_s}^{2_s^*},  \end{split}
 \end{equation} which yields $[w_0]_{s}^2< \|w_0\|_{2^*_s}^{2_s^*}$ and similarly to the proof of (\ref{bound02}), we get a contradiction.

\end{proof}

\noindent As $\epsilon\rightarrow \epsilon_0\in (0,\frac{2_s^*-2}{2})$, we have that $\{u_\epsilon\}$ is uniformly bounded with respect to $\epsilon$, as established in the following 

\begin{lemma}\label{unbound00} There exists $K>0$, which does not depend on $\epsilon$, such that any solutions $u_\epsilon$ of (\ref{maineq0}) satisfies
$\|u_\epsilon\|_\infty\leq K$ as $\epsilon\rightarrow \epsilon_0$.
\end{lemma}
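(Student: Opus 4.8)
The plan is to argue by contradiction via a blow-up rescaling, and then to suppress the growth using the local sup-estimate of Theorem \ref{TH5}. Suppose the statement fails: then there exist $\epsilon_j\to\epsilon_0$ and corresponding ground states $u_{\epsilon_j}$ with $M_j:=\|u_{\epsilon_j}\|_\infty\to+\infty$. I would pick $x_j\in\mathbb{R}^N$ with $u_{\epsilon_j}(x_j)\geq\tfrac12 M_j$ (so that no information about whether $x_j$ is bounded is needed), set
$$\mu_j:=M_j^{-\frac{2_s^*-2-\epsilon_j}{2s}},\qquad v_j(x):=\frac{1}{M_j}\,u_{\epsilon_j}(x_j+\mu_j x),$$
so that $0\leq v_j\leq1$, $v_j(0)\geq\tfrac12$, $\mu_j\to0$ (here $\epsilon_0>0$ keeps the exponent $\frac{2_s^*-2-\epsilon_j}{2s}$ bounded away from $0$), and, by rescaling (\ref{maineq0}),
$$(-\Delta)^s v_j+\mu_j^{2s}V(x_j+\mu_j x)\,v_j=v_j^{2_s^*-1-\epsilon_j}\qquad\text{in }\mathbb{R}^N.$$

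Since $V\geq0$ and $v_j\geq0$, the next step is simply to discard the potential term, obtaining $(-\Delta)^s v_j\leq v_j^{2_s^*-1-\epsilon_j}=a_j v_j$ with $a_j:=v_j^{2_s^*-2-\epsilon_j}$; because $0\leq v_j\leq1$ and $2_s^*-2-\epsilon_j>0$ one has $0\leq a_j\leq1$, hence $a_j\in L^t_{loc}(\mathbb{R}^N)$ for every $t>\frac N{2s}$ with $\|a_j\|_{L^t(B_2)}$ bounded uniformly in $j$. Applying Theorem \ref{TH5} with the \emph{fixed} radii $r=1<R=2$ (and a fixed $t>\frac N{2s}$) then yields a constant $C$ independent of $j$ with
$$\tfrac12\leq v_j(0)\leq\max_{B_1}v_j\leq C\Big(\int_{B_2}v_j^{2_s^*}\,dx\Big)^{1/2_s^*}.$$
The point of discarding the potential is exactly that the whereabouts of $x_j$ become irrelevant, so the argument goes through even if $x_j\to\infty$.

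It then remains to show $\int_{B_2}v_j^{2_s^*}\to0$. Using the change of variables $y=x_j+\mu_j x$ together with the algebraic identity $\frac{N(2_s^*-2)}{2s}=2_s^*$ (equivalently $2_s^*-2=\frac{4s}{N-2s}$), one computes $\int_{\mathbb{R}^N}v_j^{2_s^*}\,dx=M_j^{-2_s^*}\mu_j^{-N}\int_{\mathbb{R}^N}u_{\epsilon_j}^{2_s^*}\,dy=M_j^{-\frac{N\epsilon_j}{2s}}\int_{\mathbb{R}^N}u_{\epsilon_j}^{2_s^*}\,dy$. The last factor is bounded uniformly in $j$: by the Sobolev inequality (\ref{bestconst}) it is at most $S^{-2_s^*/2}[u_{\epsilon_j}]_s^{2_s^*}\leq S^{-2_s^*/2}\|u_{\epsilon_j}\|_{s,V}^{2_s^*}$, and by (\ref{bound4}) the right-hand side is a fixed power of $S^V_{2_s^*-\epsilon_j}$, which stays bounded as $\epsilon_j\to\epsilon_0$ because of the uniform bound (\ref{bound1}). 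Since $\epsilon_j\to\epsilon_0>0$, we get $M_j^{-N\epsilon_j/(2s)}\to0$, hence $\int_{B_2}v_j^{2_s^*}\to0$, contradicting the previous display and proving the lemma.

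The step I expect to be the crux is the exponent bookkeeping in the last paragraph: the argument hinges on the fact that the $L^{2_s^*}$-mass of $v_j$ over a fixed ball equals $M_j^{-N\epsilon_j/(2s)}$ times the bounded mass of $u_{\epsilon_j}$, and this vanishes precisely because $\epsilon_0$ is bounded away from $0$ — the regime in which no concentration takes place (for $\epsilon_0=0$ the factor would be of order $1$, matching the blow-up proved for $\epsilon\to0^+$). The only remaining technical care is to keep all constants — those in Theorem \ref{TH5} and in the bound for $\|u_{\epsilon_j}\|_{s,V}$ — independent of $j$, which is automatic once the radii and $t$ are fixed and $0\leq a_j\leq1$ on $B_2$.
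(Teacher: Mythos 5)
Your argument is correct, but it is a genuinely different route from the one in the paper. The paper proves the bound directly, running a Moser iteration on the Caffarelli--Silvestre extension with the truncated test functions $w_{\epsilon,L}^{2(\beta-1)}w_\epsilon$, splitting $u_\epsilon^{2_s^*-2-\epsilon}$ by H\"older against the bounded quantity $\|u_\epsilon\|_{2_s^*}$, and iterating with the gain factor $\beta_i=\left(\tfrac{2+\epsilon}{2}\right)^i$; the uniformity in $\epsilon$ comes from the fact that this gain is bounded away from $1$ precisely because $\epsilon\to\epsilon_0>0$, yielding the a priori estimate $\|u_\epsilon\|_\infty\leq C\|u_\epsilon\|_{2_s^*}$. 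You instead argue by contradiction through the blow-up rescaling $v_j(x)=M_j^{-1}u_{\epsilon_j}(x_j+\mu_j x)$ and invoke the paper's own local estimate (Theorem \ref{TH5}) as a black box, with the role of $\epsilon_0>0$ appearing instead in the scaling bookkeeping $\int_{\mathbb{R}^N}v_j^{2_s^*}\,dx=M_j^{-N\epsilon_j/(2s)}\int_{\mathbb{R}^N}u_{\epsilon_j}^{2_s^*}\,dx\to 0$, which I checked and which is exact since $\tfrac{N(2_s^*-2)}{2s}=2_s^*$; the uniform bound on $\int u_{\epsilon_j}^{2_s^*}$ via (\ref{bestconst}), (\ref{bound4}) and (\ref{bound1}) is also legitimate, so your contradiction with $v_j(0)\geq\tfrac12$ is sound, and the independence of the Theorem \ref{TH5} constant from $j$ is indeed automatic because $0\leq a_j\leq1$. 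Your version is shorter and more transparent about why the statement must fail at $\epsilon_0=0$ (the factor $M_j^{-N\epsilon_j/(2s)}$ becomes order one), while the paper's version buys slightly more: it is a genuine a priori estimate that simultaneously establishes finiteness of $\|u_\epsilon\|_\infty$, whereas your rescaling presupposes that each individual $\|u_{\epsilon_j}\|_\infty$ is finite in order to choose $x_j$ with $u_{\epsilon_j}(x_j)\geq\tfrac12 M_j$. In the context of this paper that presupposition is harmless (boundedness and continuity of the ground states for each fixed subcritical exponent are standard, e.g.\ from \cite{PEAJ12}, and the paper freely uses maximum points $x_\epsilon$), and note also that, like the paper's proof, your argument really concerns the ground-state family rather than literally ``any'' solution, since both hinge on the uniform bound for $\|u_\epsilon\|_{2_s^*}$.
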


\begin{proof} The claim can be achieved via Moser's iteration.
Indeed, let   $w_\epsilon$ be the harmonic extension of $u_\epsilon$, see e.g.~\cite{CASI07}. Then, $w_\epsilon$ satisfies
\begin{equation}\label{Poh1}
\left\{\begin{array}{cccc}
-\text{div}(y^{1-2s}\nabla w_\epsilon)=0&\ \ &\text{ in }&   \mathbb{R}^{N+1}_+,\\
\frac{\partial w_\epsilon}{\partial\nu^{s}}=   -V(\cdot)w_\epsilon(\cdot,0)+w_\epsilon^{2_s^*-1-\epsilon}(\cdot,0)&\ \ &\text{ in }&   \mathbb{R}^N\times \{y=0\}, \end{array}\right.
\end{equation}
where
$$\frac{\partial w_\epsilon}{\partial\nu^{s}}:=-\frac{1}{k_{s}}\lim_{y\rightarrow 0^+}y^{1-2s}\frac{\partial w_\epsilon}{\partial y}(x,y).$$
and $k_{s}=\frac{2^{1-2s}\Gamma(1-s)}{\Gamma(s)}$.

\noindent Following Corollary 2.1 in \cite{ALMI16}, for each $L>0$, we set
\begin{equation}
w_{\epsilon,L}(x,y)=\left\{\begin{array}{cccc}
w_\epsilon(x,y)&\ \ &\text{ if }&  w_\epsilon(x,y)\leq L,\\
L&\ \ &\text{ if }&  w_\epsilon(x,y)\geq L,  \end{array}\right.\ \ u_{\epsilon,L}(x)=w_{\epsilon,L}(x,0),
\end{equation}
and $\psi_{\epsilon,L}=w_{\epsilon,L}^{2(\beta-1)} w_\epsilon $, where $\beta>1$ to be determined later on.
By testing with $\psi_{\epsilon,L}$, we get
 \begin{equation}
   \begin{split}
  \int_{ \mathbb{R}^{N+1}_+} y^{1-2s}\nabla  w_\epsilon \nabla \psi_{\epsilon,L} dxdy= k_s^{-1}\int_{ \mathbb{R}^{N}} [-V(x) u_\epsilon(x)+u_\epsilon^{2_s^*-1-\epsilon}(x)]\psi_{\epsilon,L}(x,0) dx.
 \end{split}
 \end{equation}
Thus,
 \begin{equation}\label{Mors1}
   \begin{split}
  \int_{ \mathbb{R}^{N+1}_+} y^{1-2s}\nabla  w_\epsilon \nabla (w_{\epsilon,L}^{2(\beta-1)} w_\epsilon) dxdy\leq   k_s^{-1}\int_{ \mathbb{R}^{N}}u_\epsilon^{2_s^*-\epsilon} u_{\epsilon,L}^{2(\beta-1)}  dx.
 \end{split}
 \end{equation}
Note that
 \begin{equation}
 \nabla  w_\epsilon \nabla (w_{\epsilon,L}^{2(\beta-1)} w_\epsilon)=\left\{\begin{array}{cccc}
(2\beta-1)w_{\epsilon,L}^{2(\beta-1)}(x,y)|\nabla w_{\epsilon}|^2&\ \ &\text{ if }&  w_\epsilon(x,y)\leq L,\\
L^{2\beta-1}|\nabla w_{\epsilon}|^2&\ \ &\text{ if }&  w_\epsilon(x,y)\geq L. \end{array}\right.
\end{equation}
Thus,
from (\ref{Mors1}),   Sobolev imbedding (see e.g. ~(2.9) in \cite{WSK14}) and H\"{o}lder's inequality,
 \begin{equation}\label{Mors2}
   \begin{split}
\left(\int_{ \mathbb{R}^{N}}| u_{\epsilon,L}^{\beta-1} u_\epsilon |^{2_s^*} dx\right)^{\frac{2}{2_s^*}} \leq& C(N,s)
  \int_{ \mathbb{R}^{N+1}_+} y^{1-2s}|\nabla (w_{\epsilon,L}^{\beta-1} w_\epsilon)|^2 dxdy \\ \leq&  \beta C(N,s) \int_{ \mathbb{R}^{N}}u_\epsilon^{2_s^*-2-\epsilon} u_\epsilon^2 u_{\epsilon,L}^{2(\beta-1)} dx\\
  \leq&  \beta C(N,s)\left( \int_{ \mathbb{R}^{N}}u_\epsilon^{2_s^*} dx\right)^{\frac{2^*_s-2-\epsilon}{2^*_s}}\left( \int_{ \mathbb{R}^{N}}( u_{\epsilon,L}^{\beta-1}u_\epsilon )^{\frac{ 22^*_s}{2+\epsilon}}dx\right)^{\frac{2+\epsilon}{2^*_s}}.
 \end{split}
 \end{equation}
Since $\|u_\epsilon\|_{2_s^*}$ is bounded, from  (\ref{Mors2}) we get
 \begin{equation}\label{Mors3}
   \begin{split}
\| u_{\epsilon,L}^{\beta-1} u_\epsilon\|_{2_s^*}^2    \leq   \beta C(N,s) \left( \int_{ \mathbb{R}^{N}}( u_{\epsilon,L}^{\beta-1}u_\epsilon )^{\frac{ 22^*_s}{2+\epsilon}}dx\right)^{\frac{2+\epsilon}{2^*_s}}.
 \end{split}
 \end{equation}
 As $u_{\epsilon}\in L^{\frac{22^*_s\beta}{\epsilon}}(\mathbb{R}^N)$, by using the fact that $w_{\epsilon,L}\leq w_{\epsilon}$, we get
 \begin{equation}\label{Mors4}
   \begin{split}
\| u_{\epsilon,L}^{\beta-1} u_\epsilon\|_{2_s^*}^2    \leq   \beta C(N,s) \left( \int_{ \mathbb{R}^{N}}u_{\epsilon}^{\frac{22^*_s\beta}{2+\epsilon}}dx\right)^{\frac{2+\epsilon}{2^*_s}}.
 \end{split}
 \end{equation}
Let $L\rightarrow +\infty$ and apply Fatou's lemma to get 
 \begin{equation}\label{Mors5}
   \begin{split}
\|  u_\epsilon\|_{2_s^*\beta}^2    \leq   \beta^{\frac{1}{\beta}} C^{\frac{1}{\beta}}(N,s) \|  u_\epsilon\|_{\frac{22^*_s}{2+\epsilon}\beta}^2.
 \end{split}
 \end{equation}
The claim now follows by iteration: let $\beta_i=(\frac{2+\epsilon}{2})^i$, $i=1,2,\cdots$, then
 \begin{equation}\label{Mors5}
   \begin{split}
\|  u_\epsilon\|_{2_s^*\beta_{m+1}}    \leq   \left(\frac{2+\epsilon}{2}\right)^{\frac{1}{2}\sum_{i=1}^m i(\frac{2+\epsilon}{2})^{-i}} C^{ \frac{1}{2}\sum_{i=1}^m(\frac{2+\epsilon}{2})^{-i}}(N,s) \|  u_\epsilon\|_{ 2^*_s }.
 \end{split}
 \end{equation}
Passing to the limit as $m\rightarrow +\infty$ in (\ref{Mors5}), we have 
 \begin{equation*}\label{Mors6}
   \begin{split}
\|  u_\epsilon\|_{\infty}    \leq  C \|  u_\epsilon\|_{ 2^*_s }.
 \end{split}
 \end{equation*}
which concludes the proof.
\end{proof}

\begin{lemma} \label{SS0}
Let $\epsilon_0>0$, then $\limsup\limits_{\epsilon\rightarrow\epsilon_0} S_{2_s^*-\epsilon}^V\leq S_{2_s^*-\epsilon_0}^V$.
\end{lemma}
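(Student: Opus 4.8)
The plan is to establish upper semicontinuity of $\epsilon\mapsto S_{2_s^*-\epsilon}^V$ at $\epsilon_0$ by using a fixed minimizer at $\epsilon_0$ as a test function in the variational quotient for exponents near $\epsilon_0$. First I would invoke Theorem \ref{ThA} to produce a minimizer $w_{\epsilon_0}\in H_V^s(\mathbb{R}^N)$ of $S_{2_s^*-\epsilon_0}^V$, normalized so that $\|w_{\epsilon_0}\|_{2_s^*-\epsilon_0}^{2_s^*-\epsilon_0}=1$, whence $\|w_{\epsilon_0}\|_{s,V}^2=S_{2_s^*-\epsilon_0}^V$. Since $H_V^s(\mathbb{R}^N)$ embeds continuously into $L^2(\mathbb{R}^N)\cap L^{2_s^*}(\mathbb{R}^N)$, the function $w_{\epsilon_0}$ belongs to $L^q(\mathbb{R}^N)$ for every $q\in[2,2_s^*]$, and in particular $\|w_{\epsilon_0}\|_{2_s^*-\epsilon}$ is finite for all $\epsilon$ with $2_s^*-\epsilon\in[2,2_s^*]$.

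Next I would plug $w_{\epsilon_0}$ into (\ref{fun000}): for every such $\epsilon$ one has
\begin{equation*}
S_{2_s^*-\epsilon}^V\leq I_\epsilon(w_{\epsilon_0})=\frac{\|w_{\epsilon_0}\|_{s,V}^2}{\|w_{\epsilon_0}\|_{2_s^*-\epsilon}^2}=\frac{S_{2_s^*-\epsilon_0}^V}{\|w_{\epsilon_0}\|_{2_s^*-\epsilon}^2}\, .
\end{equation*}
Hence it suffices to show that $\|w_{\epsilon_0}\|_{2_s^*-\epsilon}\to\|w_{\epsilon_0}\|_{2_s^*-\epsilon_0}=1$ as $\epsilon\to\epsilon_0$; taking $\limsup$ in the displayed inequality then yields $\limsup_{\epsilon\to\epsilon_0}S_{2_s^*-\epsilon}^V\leq S_{2_s^*-\epsilon_0}^V$, which is the claim.

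For the continuity of $p\mapsto\|w\|_p$ at $p=2_s^*-\epsilon_0$ with $w:=w_{\epsilon_0}$ fixed, I would split $\mathbb{R}^N=\{|w|\leq 1\}\cup\{|w|>1\}$. On $\{|w|\leq 1\}$ one has $|w|^{2_s^*-\epsilon}\leq|w|^2\in L^1(\mathbb{R}^N)$ provided $2_s^*-\epsilon\geq 2$, while on $\{|w|>1\}$ one has $|w|^{2_s^*-\epsilon}\leq|w|^{2_s^*}\in L^1(\mathbb{R}^N)$; since $|w|^{2_s^*-\epsilon}\to|w|^{2_s^*-\epsilon_0}$ pointwise as $\epsilon\to\epsilon_0$, the dominated convergence theorem gives $\int_{\mathbb{R}^N}|w|^{2_s^*-\epsilon}\,dx\to\int_{\mathbb{R}^N}|w|^{2_s^*-\epsilon_0}\,dx=1$, and therefore $\|w\|_{2_s^*-\epsilon}=\bigl(\int_{\mathbb{R}^N}|w|^{2_s^*-\epsilon}\,dx\bigr)^{1/(2_s^*-\epsilon)}\to 1$.

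I do not expect a genuine obstacle in this argument; the only points requiring a modicum of care are keeping the test exponents in the admissible subcritical range $[2,2_s^*)$, which is automatic for $\epsilon$ in a (one- or two-sided) neighborhood of $\epsilon_0\in(0,2_s^*-2)$, and the elementary domination estimate for $p\mapsto\|w\|_p$, which relies solely on $w\in L^2(\mathbb{R}^N)\cap L^{2_s^*}(\mathbb{R}^N)$. Combining the two displayed facts completes the proof.
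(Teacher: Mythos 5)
Your proof is correct, and its skeleton is the same as the paper's: both arguments take a minimizer for $S_{2_s^*-\epsilon_0}^V$ as a fixed test function in the quotient defining $S_{2_s^*-\epsilon}^V$, so that everything reduces to the continuity of $\epsilon\mapsto\|w_{\epsilon_0}\|_{2_s^*-\epsilon}$ at $\epsilon_0$. The one point where you genuinely diverge is how this continuity is established. The paper expands $\int_{\mathbb{R}^N}|\phi|^{2_s^*-\epsilon}dx$ by the mean value theorem in the exponent and controls the logarithmic term $|\phi|^{\alpha}\ln\phi$ by splitting according to $\phi\lessgtr 1$ together with the uniform $L^\infty$ bound of Lemma \ref{unbound00} (a Moser-iteration result), which yields the quantitative expansion $\int|\phi|^{2_s^*-\epsilon}dx=\int|\phi|^{2_s^*-\epsilon_0}dx+O(\epsilon-\epsilon_0)$ recorded in (\ref{SS}). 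You instead use dominated convergence with the majorant $|w|^{2}\chi_{\{|w|\leq1\}}+|w|^{2_s^*}\chi_{\{|w|>1\}}$, which requires only $w_{\epsilon_0}\in L^2(\mathbb{R}^N)\cap L^{2_s^*}(\mathbb{R}^N)$ (automatic from $w_{\epsilon_0}\in H^s_V(\mathbb{R}^N)$ under $(V_1)$, via $V\geq V_0>0$ and the fractional Sobolev inequality) and no pointwise bound on the minimizer, provided $\epsilon$ stays in a neighborhood of $\epsilon_0$ where $2\leq 2_s^*-\epsilon\leq 2_s^*$, as you note. Your route is therefore a bit more elementary and self-contained for this particular lemma, since it does not lean on Lemma \ref{unbound00}; what it gives up is the explicit rate $O(\epsilon-\epsilon_0)$, which is not needed for the statement of Lemma \ref{SS0} itself, though the paper's quantitative expansion is the form of the argument it reuses later (e.g.\ in the proof of Lemma \ref{conver3}). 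For the statement as posed, your argument is complete.
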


\begin{proof}
Let $\phi>0$ be such that $S_{2_s^*-\epsilon_0}^V=\frac {\|\phi\|_{s,V}^2}{ \|\phi\|_{2_s^*-\epsilon_0}^2}$.  Then,
 \begin{equation}
   \begin{split}
 \int_{\mathbb{R}^N}|\phi|^{2^*-\epsilon}dx= \int_{\mathbb{R}^N}|\phi|^{2^*-\epsilon_0} dx +(\epsilon-\epsilon_0)\int_{\mathbb{R}^N}|\phi|^{2^*-\epsilon_0+t(\epsilon_0-\epsilon)}\ln \phi\, dx,
 \end{split}
 \end{equation} where $t\in (0,1)$.
Since $|\phi^{\alpha}\ln \phi|\leq C$ for any $\alpha>0$ as $\phi\rightarrow 0^+$ and $\ln \phi \leq 1+\phi$ as $\phi\geq1$, recalling that $\|\phi\|_\infty$ is bounded by Lemma \ref{unbound00}, we get 
 \begin{equation} \label{SS}
   \begin{split}
 \int_{\mathbb{R}^N}|\phi|^{2^*-\epsilon}dx= \int_{\mathbb{R}^N}|\phi|^{2^*-\epsilon_0} dx +O(\epsilon-\epsilon_0).
 \end{split}
 \end{equation}
Now, by (\ref{SS}) and the definition of $S_{2^*_s-\epsilon}^V$, we have
 \begin{equation}
   \begin{split}
 \limsup\limits_{\epsilon\rightarrow\epsilon_0}\left(S_{2^*_s-\epsilon}^V\right)^{\frac{2_s^*-\epsilon_0}{2}}\leq & \limsup\limits_{\epsilon\rightarrow\epsilon_0} \frac {\|\phi\|_{s,V}^{2_s^*-\epsilon_0}}{ \|\phi\|_{2_s^*-\epsilon}^{2_s^*-\epsilon_0}}\\
 =&  \limsup\limits_{\epsilon\rightarrow\epsilon_0} \frac {\|\phi\|_{s,V}^{2_s^*-\epsilon_0}}{ [\|\phi\|_{2_s^*-\epsilon_0}+O(\epsilon-\epsilon_0)]^{2_s^*-\epsilon_0}}= \left(S_{2_s^*-\epsilon_0}^V\right)^{\frac{2_s^*-\epsilon_0}{2}}.
 \end{split}
 \end{equation}
The proof of Lemma \ref{SS0} is complete.

\end{proof}

\noindent Let $x_\epsilon$ be the global maximum point of $u_\epsilon$ and let $\mu_\epsilon>0$ be such that
$$u_\epsilon(x_\epsilon)=\|u_\epsilon\|_{\infty}=\mu_\epsilon^{-\frac{2s}{2_s^*-2-\epsilon}}.$$ Clearly, from Lemma \ref{unbound} $\mu_\epsilon\rightarrow 0$ as $\epsilon\rightarrow 0^+.$
Set
$$v_\epsilon(x)=\mu_\epsilon^{\frac{2s}{2_s^*-2-\epsilon}}u_\epsilon(x_\epsilon+\mu_\epsilon x).$$
Then $0<v_\epsilon(x)\leq 1$, $v_\epsilon(0)=1$ and $v_\epsilon$ satisfies the following 
\begin{equation}\label{res1}
   \begin{split}
 (-\Delta)^s v_\epsilon+\mu_\epsilon^{2s}V(x_\epsilon+\mu_\epsilon x)v_\epsilon=v_\epsilon^{2_s^*-1-\epsilon} \ \ \text{in} \ \ \mathbb{R}^N.
 \end{split}
 \end{equation}
We have that $\|(-\Delta)^s v_{\epsilon}\|_{\infty}$ is  uniformly bounded with respect to  $\epsilon$.  As a consequence of this fact and regularity results, we deduce that also $\|v_{\epsilon}\|_{C^{2,\alpha}}$ is uniformly bounded with respect to  $\epsilon$, for
some $\alpha \in (0,1)$.  Similarly to the proof of Lemma \ref{unbound}, there exists a sequence $\epsilon$, still denoted by $v_\epsilon$, such that $v_{\epsilon}\rightarrow U$ in $C_{loc}^{2,\alpha}(\mathbb{R}^N)$, where $U$ is the
positive solution of equation
\begin{equation}
  (-\Delta)^su= u^{2^*_s-1} \ \  \text{in}\ \ \mathbb{R}^N
 \end{equation}
and $U(0)=\|U\|_\infty=1$.
\noindent From Theorem 1.2 in \cite{CLO06}, 
\begin{equation}\label{att}U(x)=\left(1+\frac{|x|^2}{\lambda^2}\right)^{\frac{2s-N}{2}},\ \ \text{ where }\ \ \lambda=2\left(\frac{\Gamma\left(\frac{N+2s}{2}\right)}{\Gamma\left(\frac{N-2s}{2}\right)}\right)^{\frac{1}{2}}. \end{equation}

\noindent Since $$S=\frac{[U]_{s}^2}{\|U\|_{2_s^*}^{2}}=\|U\|_{2_s^*}^{2_s^*-2}=[U]_{s}^{2-\frac{4}{2_s^*}},$$  we conclude that
$$\|U\|_{2_s^*}^{2_s^*}=[U]_{s}^{2}=S^{\frac{N}{2s}}.$$
 By Lemma  \ref{SS2},  we have
\begin{equation} \label{cover1}
   \begin{split}
  S^{\frac{N}{2s}}=&[U ]_{s}^2\leq \liminf\limits_{\epsilon\rightarrow 0^+}[v_\epsilon]_{s}^2\\
  \leq &  \limsup\limits_{\epsilon\rightarrow 0^+}[v_\epsilon]_{s}^2\\
   \leq &  \limsup\limits_{\epsilon\rightarrow 0^+}\left[\int_{\mathbb{R}^N}\frac{|v_\epsilon(x)-v_\epsilon(y)|^2}{|x-y|^{N+2s}}dxdy+\int_{\mathbb{R}^N}V(x_\epsilon+\mu_\epsilon x)v^2_\epsilon dx\right]\\
   =&  \limsup\limits_{\epsilon\rightarrow 0^+}\mu_\epsilon^{\frac{(N-2s)\epsilon}{2_s^*-2-\epsilon} }\left[\int_{\mathbb{R}^N}\frac{|u_\epsilon(x)-u_\epsilon(y)|^2}{|x-y|^{N+2s}}dxdy+ \int_{\mathbb{R}^N}V( x)u^2_\epsilon dx\right]\\
  \leq &  \limsup\limits_{\epsilon\rightarrow 0^+} \left[\int_{\mathbb{R}^N}\frac{|u_\epsilon(x)-u_\epsilon(y)|^2}{|x-y|^{N+2s}}dxdy+ \int_{\mathbb{R}^N}V( x)u^2_\epsilon dx\right]\\
  =&S^{\frac{N}{2s}}.
 \end{split}
 \end{equation}

\noindent Finally,  form Lemma \ref{SS2} and (\ref{cover1}), we obtain the following convergences

\begin{lemma}\label{conver2}
$[v_\epsilon-U]_{s} \rightarrow 0$,
$\|v_\epsilon-U\|_{2_s^*} \rightarrow 0$,
$[v_\epsilon]_{s}^2 \rightarrow S^{\frac{N}{2s}}$, and $\mu_\epsilon^\epsilon\rightarrow 1$
 as $\epsilon\rightarrow 0^+.$
\end{lemma}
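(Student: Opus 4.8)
The plan is to read the first two assertions straight off the sandwich (\ref{cover1}) together with Lemma \ref{SS2}, and then to upgrade the local convergence $v_\epsilon\to U$ recorded before the lemma to convergence of the Gagliardo seminorm and of the $L^{2_s^*}$-norm.

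\emph{Step 1: $[v_\epsilon]_s^2\to S^{\frac{N}{2s}}$ and $\mu_\epsilon^\epsilon\to1$.} In the chain (\ref{cover1}) one has $S^{\frac{N}{2s}}=[U]_s^2\le\liminf_{\epsilon\to0^+}[v_\epsilon]_s^2\le\limsup_{\epsilon\to0^+}[v_\epsilon]_s^2\le S^{\frac{N}{2s}}$, so every inequality there is an equality in the limit; in particular $\lim_{\epsilon\to0^+}[v_\epsilon]_s^2=S^{\frac{N}{2s}}$. Since $\mu_\epsilon\to0$, we have $\mu_\epsilon^{\frac{(N-2s)\epsilon}{2_s^*-2-\epsilon}}\le1$ for $\epsilon$ small, while the potential term in (\ref{cover1}) is nonnegative; hence
\[
[v_\epsilon]_s^2\ \le\ \mu_\epsilon^{\frac{(N-2s)\epsilon}{2_s^*-2-\epsilon}}\,\|u_\epsilon\|_{s,V}^2\ \le\ \|u_\epsilon\|_{s,V}^2 ,
\]
and both outer quantities tend to $S^{\frac{N}{2s}}$ (by Lemma \ref{SS2} and by what we just proved), so $\mu_\epsilon^{\frac{(N-2s)\epsilon}{2_s^*-2-\epsilon}}\|u_\epsilon\|_{s,V}^2\to S^{\frac{N}{2s}}$ and therefore $\mu_\epsilon^{\frac{(N-2s)\epsilon}{2_s^*-2-\epsilon}}\to1$. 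Writing this as $\frac{(N-2s)\epsilon}{2_s^*-2-\epsilon}\ln\mu_\epsilon\to0$ and noting that the coefficient $\frac{N-2s}{2_s^*-2-\epsilon}$ stays bounded and bounded away from $0$ as $\epsilon\to0^+$, we conclude $\epsilon\ln\mu_\epsilon\to0$, i.e. $\mu_\epsilon^\epsilon=e^{\epsilon\ln\mu_\epsilon}\to1$.

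\emph{Step 2: $[v_\epsilon-U]_s\to0$ and $\|v_\epsilon-U\|_{2_s^*}\to0$.} From the discussion preceding the lemma, $v_\epsilon\to U$ in $C^{2,\alpha}_{loc}(\mathbb{R}^N)$, hence $v_\epsilon\to U$ a.e. in $\mathbb{R}^N$; and $\{v_\epsilon\}$ is bounded in $D^s(\mathbb{R}^N)$ by Step 1. Thus along any subsequence, reflexivity of $D^s(\mathbb{R}^N)$ yields a further subsequence with $v_\epsilon\rightharpoonup\widetilde U$ weakly in $D^s(\mathbb{R}^N)$; the compact embedding $D^s(\mathbb{R}^N)\hookrightarrow L^2_{loc}(\mathbb{R}^N)$ then forces $v_\epsilon\to\widetilde U$ a.e. along a sub-subsequence, whence $\widetilde U=U$. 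Since (\ref{att}), borrowed from \cite{CLO06}, exhibits $U$ as the unique positive solution of $(-\Delta)^s u=u^{2_s^*-1}$ normalized by $U(0)=\|U\|_\infty=1$, the weak limit is always this same $U$, so $v_\epsilon\rightharpoonup U$ in $D^s(\mathbb{R}^N)$ for the full family. Combining weak convergence with $[v_\epsilon]_s\to[U]_s$ and expanding in the Hilbert space $D^s(\mathbb{R}^N)$,
\[
[v_\epsilon-U]_s^2=[v_\epsilon]_s^2-2\langle v_\epsilon,U\rangle_{D^s}+[U]_s^2\ \longrightarrow\ [U]_s^2-2[U]_s^2+[U]_s^2=0 .
\]
Finally $\|v_\epsilon-U\|_{2_s^*}\le S^{-1/2}[v_\epsilon-U]_s\to0$ by the Sobolev inequality (\ref{bestconst}), which closes all four statements.

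The argument is essentially bookkeeping: all the analytic work (compactness of $\{v_\epsilon\}$, the energy asymptotics of Lemma \ref{SS2}, and the sandwich (\ref{cover1})) is already in place. The only two points that need a little attention are that the weak convergence holds for the whole family and not merely along subsequences — which is precisely uniqueness of the limit bubble $U$ — and the passage from $\mu_\epsilon^{(N-2s)\epsilon/(2_s^*-2-\epsilon)}\to1$ to $\mu_\epsilon^\epsilon\to1$, which only uses that the exponent's coefficient is bounded above and below. I do not expect a genuine obstacle here.
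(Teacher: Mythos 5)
Your proposal is correct and follows essentially the same route as the paper, which simply reads the lemma off the sandwich (\ref{cover1}) together with Lemma \ref{SS2}; you merely make explicit the standard details the paper leaves implicit (the sandwich forcing $\mu_\epsilon^{(N-2s)\epsilon/(2_s^*-2-\epsilon)}\to1$, weak convergence plus norm convergence in the Hilbert space $D^s(\mathbb{R}^N)$ giving $[v_\epsilon-U]_s\to0$, and the Sobolev inequality (\ref{bestconst}) giving the $L^{2_s^*}$ convergence). The only cosmetic caveat is that the convergence $v_\epsilon\to U$ in $C^{2,\alpha}_{loc}$ was extracted along a subsequence, so the lemma is really asserted along that sequence, as in the paper.
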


\begin{proposition}\label{conver2+} If $x_{\epsilon}\rightarrow x_0\in (0,+\infty)$ as $\epsilon\rightarrow 0^+$. Then
$$|u_\epsilon|^{2_s^*}(x)\rightarrow S^{\frac{N}{2s}} \delta(x-x_0), \quad \epsilon\rightarrow 0$$ in the sense of distributions.
\end{proposition}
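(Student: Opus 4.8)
The plan is to test $|u_\epsilon|^{2_s^*}$ against an arbitrary $\phi\in C_c^\infty(\mathbb{R}^N)$ and prove that $\int_{\mathbb{R}^N}u_\epsilon^{2_s^*}\phi\,dx\to S^{\frac N{2s}}\phi(x_0)$. The engine is the blow-up change of variables $x=x_\epsilon+\mu_\epsilon y$, under which $u_\epsilon(x)=\mu_\epsilon^{-\frac{2s}{2_s^*-2-\epsilon}}v_\epsilon(y)$, combined with Lemma~\ref{conver2} (which gives $v_\epsilon\to U$ in $L^{2_s^*}(\mathbb{R}^N)$ and $\mu_\epsilon^\epsilon\to1$) and the identity $\|U\|_{2_s^*}^{2_s^*}=S^{\frac N{2s}}$. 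A direct computation of the Jacobian and of the exponents shows that, for any measurable $E\subset\mathbb{R}^N$,
\[
\int_{E}u_\epsilon^{2_s^*}(x)\,dx=\mu_\epsilon^{\theta_\epsilon}\int_{\mu_\epsilon^{-1}(E-x_\epsilon)}v_\epsilon^{2_s^*}(y)\,dy,\qquad \theta_\epsilon=-\frac{N(N-2s)}{4s-\epsilon(N-2s)}\,\epsilon,
\]
so that the scaling factor satisfies $\mu_\epsilon^{\theta_\epsilon}=\exp\!\big(\tfrac{\theta_\epsilon}{\epsilon}\cdot\epsilon\log\mu_\epsilon\big)\to1$, since $\theta_\epsilon/\epsilon$ stays bounded as $\epsilon\to0^+$ and $\epsilon\log\mu_\epsilon\to0$ because $\mu_\epsilon^\epsilon\to1$.

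First I would record the two mass facts. Taking $E=\mathbb{R}^N$ and using $\|v_\epsilon\|_{2_s^*}\to\|U\|_{2_s^*}$ (from Lemma~\ref{conver2}) gives $\|u_\epsilon\|_{2_s^*}^{2_s^*}\to\|U\|_{2_s^*}^{2_s^*}=S^{\frac N{2s}}$. Taking $E=B_\rho(x_\epsilon)$ for fixed $\rho>0$ gives $\int_{B_\rho(x_\epsilon)}u_\epsilon^{2_s^*}\,dx=\mu_\epsilon^{\theta_\epsilon}\int_{B_{\rho/\mu_\epsilon}(0)}v_\epsilon^{2_s^*}\,dy$; since $\mu_\epsilon\to0$ we have $\rho/\mu_\epsilon\to+\infty$, and I would check that $\int_{B_{\rho/\mu_\epsilon}(0)}v_\epsilon^{2_s^*}\,dy\to\int_{\mathbb{R}^N}U^{2_s^*}$. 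This follows from $\int_{B_R^c}v_\epsilon^{2_s^*}\le\int_{B_R^c}U^{2_s^*}+\int_{\mathbb{R}^N}|v_\epsilon^{2_s^*}-U^{2_s^*}|$, where the first term tends to $0$ as $R\to\infty$ uniformly in $\epsilon$ and the second tends to $0$ as $\epsilon\to0^+$ by the elementary inequality $|a^{2_s^*}-b^{2_s^*}|\le 2_s^*(a^{2_s^*-1}+b^{2_s^*-1})|a-b|$, Hölder with exponents $\tfrac{2_s^*}{2_s^*-1}$ and $2_s^*$, and the boundedness of $\|v_\epsilon\|_{2_s^*}$. Hence $\int_{B_\rho(x_\epsilon)}u_\epsilon^{2_s^*}\to S^{\frac N{2s}}$ for every $\rho>0$, and subtracting from the total mass yields $\int_{\mathbb{R}^N\setminus B_\rho(x_\epsilon)}u_\epsilon^{2_s^*}\to0$.

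To conclude, I would split $\int_{\mathbb{R}^N}u_\epsilon^{2_s^*}\phi=\int_{B_\rho(x_\epsilon)}u_\epsilon^{2_s^*}\phi+\int_{B_\rho(x_\epsilon)^c}u_\epsilon^{2_s^*}\phi$: the second term is at most $\|\phi\|_\infty\int_{B_\rho(x_\epsilon)^c}u_\epsilon^{2_s^*}\to0$, while in the first one, writing $\phi=\phi(x_0)+(\phi-\phi(x_0))$ and using that for $x\in B_\rho(x_\epsilon)$ one has $|x-x_0|\le\rho+|x_\epsilon-x_0|$, so $|\phi(x)-\phi(x_0)|\le\omega_\phi(\rho+|x_\epsilon-x_0|)$ with $\omega_\phi$ the modulus of continuity of $\phi$, one gets
\begin{align*}
\Big|\int_{\mathbb{R}^N}u_\epsilon^{2_s^*}\phi\,dx-\phi(x_0)S^{\frac N{2s}}\Big|
&\le|\phi(x_0)|\,\Big|\int_{B_\rho(x_\epsilon)}u_\epsilon^{2_s^*}\,dx-S^{\frac N{2s}}\Big|\\
&\quad+\omega_\phi\big(\rho+|x_\epsilon-x_0|\big)\int_{B_\rho(x_\epsilon)}u_\epsilon^{2_s^*}\,dx+\|\phi\|_\infty\int_{B_\rho(x_\epsilon)^c}u_\epsilon^{2_s^*}\,dx.
\end{align*}
Letting $\epsilon\to0^+$ and then $\rho\to0^+$ gives the claim, since $\omega_\phi(\rho)\to0$. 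The genuinely delicate points — which I expect to be the main obstacle — are the uniform control of the scaling factor $\mu_\epsilon^{\theta_\epsilon}$, which hinges on $\mu_\epsilon^\epsilon\to1$ from Lemma~\ref{conver2}, and the passage from the global $L^{2_s^*}$-convergence $v_\epsilon\to U$ to the convergence of masses on the exhausting balls $B_{\rho/\mu_\epsilon}(0)$; both are handled as indicated above, so no real difficulty remains.
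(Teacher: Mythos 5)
Your proposal is correct and takes essentially the same route as the paper's proof: the same blow-up change of variables $x=x_\epsilon+\mu_\epsilon y$, the same scaling exponent $-\frac{N(N-2s)\epsilon}{4s-\epsilon(N-2s)}$ controlled through $\mu_\epsilon^\epsilon\to 1$, and the convergence $v_\epsilon\to U$ in $L^{2_s^*}(\mathbb{R}^N)$ from Lemma \ref{conver2}. The only difference is in bookkeeping: the paper passes to the limit directly in the rescaled integral $\int_{\mathbb{R}^N} v_\epsilon^{2_s^*}\phi(x_\epsilon+\mu_\epsilon x)\,dx$, whereas you justify the same limit via mass concentration on the balls $B_\rho(x_\epsilon)$ and the modulus of continuity of $\phi$, which is a more detailed but equivalent argument.
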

\begin{proof} For any $\phi\in C_0^\infty(\mathbb{R}^N)$, we get
 \begin{equation} \label{cover4-}
   \begin{split}
\lim_{\epsilon\rightarrow 0^+}\int_{\mathbb{R}^N} |u_\epsilon|^{2_s^*} \phi dx=& \lim_{\epsilon\rightarrow 0^+}\left[ \mu_\epsilon^{-\frac{\epsilon N(N-2s)}{4s-\epsilon(N-2s)}}\int_{\mathbb{R}^N} |v_\epsilon|^{2_s^*} \phi(x_\epsilon+\mu_\epsilon x) dx\right]\\
=&  \phi(x_0)\int_{\mathbb{R}^N} |U|^{2_s^*}  dx\\
=& \phi(x_0) S^{\frac{N}{2s}}.
  \end{split}
 \end{equation}
\end{proof}

\noindent Notice that up to now we do not know wether the global maximum point    $x_\epsilon$ turns out to be  bounded or unbounded.

\begin{lemma}\label{conver3} Suppose that $\{x_\epsilon\}$ is bounded. Then,
 \begin{equation} \label{cover4}
   \begin{split}
  \sup_{\epsilon \in \left(0,\frac{2_s^*-2}{2}\right)}\int_{|x|\geq R} u_\epsilon^{2_s^*}dx\rightarrow 0   \text{ as } R\rightarrow +\infty.
 \end{split}
 \end{equation}
\end{lemma}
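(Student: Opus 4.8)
The plan is to argue by contradiction, reducing the uniform tightness to the concentration facts already established. If the conclusion failed, there would exist $\delta_0>0$, $R_j\to+\infty$ and $\epsilon_j\in\left(0,\frac{2_s^*-2}{2}\right)$ with $\int_{|x|\geq R_j}u_{\epsilon_j}^{2_s^*}\,dx\geq\delta_0$ for every $j$. Up to a subsequence we may assume $\epsilon_j\to\bar\epsilon\in\left[0,\frac{2_s^*-2}{2}\right]$ and, since $\{x_\epsilon\}$ is bounded, $x_{\epsilon_j}\to x_0$ for some $x_0\in\mathbb{R}^N$. I would treat the blow-up regime $\bar\epsilon=0$ and the non-degenerate regime $\bar\epsilon>0$ separately.

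\textbf{The regime $\bar\epsilon=0$.} The point is that $u_{\epsilon_j}^{2_s^*}\,dx$ converges not only weakly-$*$ to $S^{\frac{N}{2s}}\delta_{x_0}$ (Proposition \ref{conver2+}) but also with the full mass. Indeed, by Lemma \ref{conver2} one has $\|v_{\epsilon_j}-U\|_{2_s^*}\to0$ and $\mu_{\epsilon_j}^{\epsilon_j}\to1$, and the change of variables $y=x_{\epsilon_j}+\mu_{\epsilon_j}x$ yields
\[ \|u_{\epsilon_j}\|_{2_s^*}^{2_s^*}=\left(\mu_{\epsilon_j}^{\epsilon_j}\right)^{-\frac{N(N-2s)}{4s-\epsilon_j(N-2s)}}\|v_{\epsilon_j}\|_{2_s^*}^{2_s^*}\longrightarrow\|U\|_{2_s^*}^{2_s^*}=S^{\frac{N}{2s}}. \]
Fix $\phi\in C_0^\infty(\mathbb{R}^N)$ with $0\leq\phi\leq1$, $\phi\equiv1$ on $B_{|x_0|+1}$ and $\operatorname{supp}\phi\subset B_\rho$ for some fixed $\rho$. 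For $j$ so large that $R_j>\rho$ we have $\phi\equiv0$ on $B_{R_j}^c$, hence
\[ \delta_0\leq\int_{|x|\geq R_j}u_{\epsilon_j}^{2_s^*}\,dx\leq\int_{\mathbb{R}^N}(1-\phi)\,u_{\epsilon_j}^{2_s^*}\,dx=\|u_{\epsilon_j}\|_{2_s^*}^{2_s^*}-\int_{\mathbb{R}^N}\phi\,u_{\epsilon_j}^{2_s^*}\,dx\longrightarrow S^{\frac{N}{2s}}-S^{\frac{N}{2s}}\phi(x_0)=0, \]
a contradiction.

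\textbf{The regime $\bar\epsilon>0$.} Here there is no blow-up: Lemma \ref{unbound00} gives $\|u_{\epsilon_j}\|_\infty\leq K$, while \eqref{bound4} together with \eqref{bound1} bounds $\|u_{\epsilon_j}\|_{s,V}$, so $\{u_{\epsilon_j}\}$ is bounded in $H^s_V(\mathbb{R}^N)\cap L^\infty(\mathbb{R}^N)$ and the exponent $2_s^*-\bar\epsilon$ is subcritical. I would then run the concentration--compactness splitting already used in the proof of Theorem \ref{ThA}: after passing to a subsequence, $u_{\epsilon_j}\rightharpoonup u_0$, a nonnegative solution of $(-\Delta)^su_0+V(x)u_0=u_0^{2_s^*-1-\bar\epsilon}$; vanishing is excluded because $\|u_{\epsilon_j}\|_{2_s^*-\epsilon_j}=\left(S_{2_s^*-\epsilon_j}^V\right)^{\frac{1}{2_s^*-2-\epsilon_j}}$ stays bounded away from $0$; and any escape of $L^{2_s^*-\epsilon_j}$--mass at infinity produces, after translating by points $y_j\to\infty$ and using that $(V_1)$ forces $\lim_{|x|\to\infty}V(x)=V_\infty$, a nontrivial solution $\tilde u$ of $(-\Delta)^s\tilde u+V_\infty\tilde u=\tilde u^{2_s^*-1-\bar\epsilon}$. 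Writing $c_\epsilon^W:=\frac{2_s^*-\epsilon-2}{2(2_s^*-\epsilon)}\left(S_{2_s^*-\epsilon}^W\right)^{\frac{2_s^*-\epsilon}{2_s^*-2-\epsilon}}$ for the ground-state level of $(-\Delta)^su+Wu=u^{2_s^*-1-\epsilon}$, $W\in\{V,V_\infty\}$ (so that the energy of $u_{\epsilon_j}$ equals $c_{\epsilon_j}^V$, by \eqref{bound4} and the fact that $u_{\epsilon_j}$ solves \eqref{maineq0}), the Brezis--Lieb bookkeeping in the splitting gives $c_{\bar\epsilon}^V=\lim_j c_{\epsilon_j}^V\geq 0+c_{\bar\epsilon}^\infty$, once one knows $S_{2_s^*-\epsilon}^V\to S_{2_s^*-\bar\epsilon}^V$ (Lemma \ref{SS0} and its lower-semicontinuous counterpart). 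Since $t\mapsto t^{\frac{2_s^*-\bar\epsilon}{2_s^*-2-\bar\epsilon}}$ is increasing, the strict inequality $S_{2_s^*-\bar\epsilon}^V<S_{2_s^*-\bar\epsilon}^\infty$ proved in Theorem \ref{ThA} forces $c_{\bar\epsilon}^V<c_{\bar\epsilon}^\infty$, a contradiction. Hence no mass escapes, so $\int_{|x|\geq R_j}u_{\epsilon_j}^{2_s^*-\epsilon_j}\,dx\to0$, and since $u_{\epsilon_j}\leq K$ also $\int_{|x|\geq R_j}u_{\epsilon_j}^{2_s^*}\,dx\leq\max\{1,K\}^{\epsilon_j}\int_{|x|\geq R_j}u_{\epsilon_j}^{2_s^*-\epsilon_j}\,dx\to0$, again contradicting $\geq\delta_0$.

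The main obstacle is the regime $\bar\epsilon>0$, where one must carry out the full concentration--compactness analysis --- compactness of the ``finite part'', finiteness of the number of escaping bubbles, and the resulting energy inequality --- and combine it with the continuity of $\epsilon\mapsto S_{2_s^*-\epsilon}^V$; the strict gap $S^V<S^\infty$ is exactly what rules out loss of mass, just as in the existence proof of Theorem \ref{ThA}. By contrast, the regime $\bar\epsilon=0$ is immediate: knowing both the weak-$*$ limit and the convergence of the total mass is equivalent to tight convergence of $u_{\epsilon_j}^{2_s^*}\,dx$.
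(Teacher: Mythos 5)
Your contradiction setup and your treatment of the regime $\bar\epsilon=0$ are essentially the paper's own argument: both rest on Lemma \ref{conver2} ($v_{\epsilon_j}\to U$ in $L^{2_s^*}$ and $\mu_{\epsilon_j}^{\epsilon_j}\to 1$); you package it as ``total mass converges to $S^{N/2s}$ plus concentration at $x_0$'' via Proposition \ref{conver2+}, while the paper simply rescales the tail integral directly as in (\ref{cover5}), so that part is fine. In the regime $\bar\epsilon>0$ you take a genuinely different and much heavier route. The paper never invokes a splitting or the problem at infinity there: it uses the uniform $L^\infty$ bound of Lemma \ref{unbound00} and local regularity to extract a limit $u_{\epsilon_0}$, which is nontrivial and positive because the maximum points stay bounded and $\|u_{\epsilon_j}\|_\infty\geq V_0^{1/(2_s^*-2)}$ by (\ref{supbound}); then the chain (\ref{decay2}), namely $S^V_{2_s^*-\epsilon_0}\leq \|u_{\epsilon_0}\|_{s,V}^{2-\frac{4}{2_s^*-\epsilon_0}}\leq\liminf_j S^V_{2_s^*-\epsilon_j}\leq\limsup_{\epsilon\to\epsilon_0}S^V_{2_s^*-\epsilon}\leq S^V_{2_s^*-\epsilon_0}$ (weak lower semicontinuity of the norm plus Lemma \ref{SS0}), forces $\|u_{\epsilon_j}\|_{s,V}\to\|u_{\epsilon_0}\|_{s,V}$, hence strong convergence in $H^s_V$ and in $L^{2_s^*}$, which immediately contradicts a mass $\geq\delta$ escaping beyond $R_j\to\infty$.

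Your global-compactness comparison with the ground-state level at infinity can be made to work, but it requires two ingredients that are not in the paper and that you only partly acknowledge. First, the Struwe-type splitting with varying subcritical exponents and the \emph{two-sided} continuity of $\epsilon\mapsto S^V_{2_s^*-\epsilon}$ at $\bar\epsilon$: the paper only proves the upper semicontinuity (Lemma \ref{SS0}); the matching lower bound along the ground-state sequence can be obtained by interpolation between $L^2$ and $L^\infty$ using Lemma \ref{unbound00} and (\ref{bound4}), but it has to be written out, as does the splitting itself. Second, and more importantly, the strict gap $S^V_{2_s^*-\bar\epsilon}<S^\infty_{2_s^*-\bar\epsilon}$ holds only when $V\not\equiv V_\infty$; under $(V_1)$ alone $V$ may be constant, and then your energy inequality $c^V_{\bar\epsilon}\geq 0+c^\infty_{\bar\epsilon}$ gives no contradiction. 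In that case you must additionally use, as the paper implicitly does, that the weak limit is nontrivial (bounded maxima together with (\ref{supbound})), so that an escaping bubble plus a nontrivial limit would strictly exceed the ground-state level. With these two points patched your proof goes through; the paper's route avoids both and is considerably shorter, at the price of being specific to the minimizing (ground-state) characterization of $u_\epsilon$.
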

\begin{proof}
Assume by contradiction  that \eqref{cover4} does not hold. Then, there exist two sequences $\epsilon_j\rightarrow\epsilon_0$ and $R_j\rightarrow +
\infty$ such that
 \begin{equation} \label{cover55}
   \begin{split}
  \int_{|x|\geq R_j} u_{\epsilon_j}^{2_s^*}dx \geq \delta,
 \end{split}
 \end{equation}
for some $\delta>0$ and $j=1,2,\cdots$. We distinguish two cases:

\noindent \textit{Case 1}. $\varepsilon_0>0$. By (\ref{normbound}), $\{u_\epsilon\}$ is  bounded in $H^s_{V}(\mathbb{R}^N)$, passing to a subsequence $\{u_{\epsilon_j}\}$ if necessary, we may assume 
$u_{\epsilon_j}\rightharpoonup u_{\epsilon_0}$ in $H^s_{V}(\mathbb{R}^N)$. On the other hand, from Lemma \ref{unbound00}, we know that $\|u_{\varepsilon_j}\|_\infty$ is  bounded and by regularity we deduce that $\|u_{\epsilon_j}\|_{C^{2,\alpha}}$ is uniformly bounded with respect to $\epsilon_j$, for
some $\alpha \in (0,1)$. Up to extracting again a subsequence, still denoted by $\{u_{\epsilon_j}\}$,   we have $u_{\epsilon_j}\rightarrow u_{\epsilon_0}$ in $C_{loc}^{2,\alpha}(\mathbb{R}^N)$. Thus, $u_{\epsilon_0}$ is a classical nonnegative solution of the equation
 \begin{equation}\label{decay1}(-\Delta)^s u+V(x)u=u^{2_s^*-1-\epsilon_0} \ \ \text{in}\ \ \mathbb{R}^N. \end{equation}
By
(\ref{supbound}), we get $u_{\epsilon_0}(x)\not\equiv 0$. Moreover, if there is $x_0\in \mathbb{R}^N$ such that $u_{\epsilon_0}(x_0)= 0$, then from (\ref{decay1}), $(-\Delta)^s u(x_0)=0$. However, by the very definition
$$(-\Delta)^s u(x_0)=c_{N,s}\text{PV}\int_{\mathbb{R}^n}\frac{ -u(y)}{|x_0-y|^{N+2s}}dy<0, $$
which is a contradiction. Thus, $u_{\epsilon_0}(x)> 0$ for all $x\in \mathbb{R}^N$.

\noindent Now, by (\ref{bound4}) and Lemma \ref{SS0},  observe that
 \begin{equation} \label{decay2}
   \begin{split}
 S_{2^*_s-\epsilon_0}^V\leq& \frac {\|u_{\epsilon_0}\|_{s,V}^2}{ \|u_{\epsilon_0}\|_{2_s^*-\epsilon_0}^2}=\|u_{\epsilon_0}\|_{s,V}^{2-\frac{4}{2_s^*-\epsilon_0}}\\
 \leq& \liminf\limits_{j\rightarrow \infty}\|u_{\epsilon_j}\|_{s,V}^{2-\frac{4}{2_s^*-\epsilon_0}}\\
=& \liminf\limits_{j\rightarrow \infty}S_{2_s^*-\epsilon_j}^V\\
\leq& \limsup\limits_{\epsilon \rightarrow \epsilon_0}S_{2_s^*-\epsilon}^V\\
\leq& S_{2_s^*-\epsilon_0}^V.
 \end{split}
 \end{equation}
Therefore, we get
 \begin{equation} \label{decay3}
   \begin{split}
 \lim\limits_{j\rightarrow \infty}S_{2_s^*-\epsilon_j}^V= S_{2^*_s-\epsilon_0}^V.
 \end{split}
 \end{equation}
Similarly to the proof of Lemma \ref{SS0}, from (\ref{decay3}) we get $\|u_{\epsilon_j}\|_{s,V}\rightarrow \|u_{\epsilon_0}\|_{s,V}$ as $j\rightarrow+\infty$ and hence $u_{\epsilon_j}\rightarrow u_{\epsilon_0}$ in $L^{2_s^*}(\mathbb{R}^N)$ as $j\rightarrow+\infty$. This contradicts (\ref{cover55}).

\noindent \textit{Case 2}. $\varepsilon_0=0$. Thanks to Lemma  \ref{conver2}, we obtain a contradiction  from (\ref{cover55}). Indeed, we have
 \begin{equation} \label{cover5}
   \begin{split}
  \delta \leq& \int_{|x|\geq R_j} u_{\epsilon_j}^{2_s^*}dx
   =\mu_{\epsilon_j}^{N-\frac{2s2_s^*}{2_s^*-2-\epsilon_j}}\int_{|x_{\epsilon_j}+\mu_{\epsilon_j}x|\geq R_j} v_{\epsilon_j}^{2_s^*}(x)dx\\
   \leq &(\mu_{\epsilon_j}^{\epsilon_j})^ {\frac{N(N-2)}{4s-N\epsilon_j+2s\epsilon_j}}\int_{|x|\geq \frac{R_j-|x_{\epsilon_j}|}{\mu_{\epsilon_j}}} v_{\epsilon_j}^{2_s^*}(x)dx\\
   \rightarrow &0 \text{ as } j\rightarrow\infty,
 \end{split}
 \end{equation}
since $\mu_{\epsilon_j}^{\epsilon_j}\rightarrow 1$, $\frac{R_j-|x_{\epsilon_j}|}{\mu_{\epsilon_j}}\rightarrow +\infty$ and $v_{\epsilon_j}\rightarrow U$ in $L^{2_s^*}(\mathbb{R}^N)$ as $j\rightarrow \infty$.

\noindent The proof   is now complete.

\end{proof}

\begin{lemma}\label{conver3+} Suppose that $\{x_\epsilon\}$ is unbounded. Then, for any fixed $R>0$,
 \begin{equation} \label{cover4+}
   \begin{split}
  \limsup_{\epsilon\rightarrow 0^+}\int_{|x-x_\epsilon|\geq R} u_\epsilon^{2_s^*}dx= 0.
 \end{split}
 \end{equation}
\end{lemma}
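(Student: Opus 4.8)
The plan is to pull the integral back to the rescaled function $v_\epsilon$ and then use that $v_\epsilon\to U$ in $L^{2_s^*}(\mathbb{R}^N)$ together with $\mu_\epsilon^{\epsilon}\to1$, both provided by Lemma~\ref{conver2}. Recall that by definition of $v_\epsilon$ we have, for $z\in\mathbb{R}^N$,
\begin{equation*}
u_\epsilon(z)=\mu_\epsilon^{-\frac{2s}{2_s^*-2-\epsilon}}\,v_\epsilon\!\left(\frac{z-x_\epsilon}{\mu_\epsilon}\right),
\end{equation*}
and that $\mu_\epsilon\to0$ as $\epsilon\to0^+$ by Lemma~\ref{unbound}. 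Fix $R>0$ and change variables $y=(z-x_\epsilon)/\mu_\epsilon$ to obtain
\begin{align*}
\int_{|z-x_\epsilon|\geq R}u_\epsilon^{2_s^*}\,dz
&=\mu_\epsilon^{\,N-\frac{2s\,2_s^*}{2_s^*-2-\epsilon}}\int_{|y|\geq R/\mu_\epsilon}v_\epsilon^{2_s^*}(y)\,dy\\
&=\bigl(\mu_\epsilon^{\epsilon}\bigr)^{-\frac{N(N-2s)}{4s-\epsilon(N-2s)}}\int_{|y|\geq R/\mu_\epsilon}v_\epsilon^{2_s^*}(y)\,dy,
\end{align*}
where we used the elementary identity $N-\frac{2s\,2_s^*}{2_s^*-2-\epsilon}=-\frac{\epsilon N(N-2s)}{4s-\epsilon(N-2s)}$ already exploited in the proof of Proposition~\ref{conver2+}.

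Next I would estimate the two factors separately. Since $4s-\epsilon(N-2s)\geq 2s$ for $\epsilon\in(0,\tfrac{2_s^*-2}{2})$, the exponent $-\tfrac{N(N-2s)}{4s-\epsilon(N-2s)}$ stays bounded and converges to $-\tfrac{N(N-2s)}{4s}$; hence, by $\mu_\epsilon^{\epsilon}\to1$ from Lemma~\ref{conver2}, the prefactor tends to $1$. For the remaining integral, using $v_\epsilon^{2_s^*}\leq 2^{2_s^*-1}\bigl(|v_\epsilon-U|^{2_s^*}+U^{2_s^*}\bigr)$ we get
\begin{equation*}
\int_{|y|\geq R/\mu_\epsilon}v_\epsilon^{2_s^*}(y)\,dy\leq 2^{2_s^*-1}\left(\|v_\epsilon-U\|_{2_s^*}^{2_s^*}+\int_{|y|\geq R/\mu_\epsilon}U^{2_s^*}(y)\,dy\right).
\end{equation*}
The first term on the right tends to $0$ by Lemma~\ref{conver2}. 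For the second, $\mu_\epsilon\to0$ forces $R/\mu_\epsilon\to+\infty$, so $\mathbf{1}_{\{|y|\geq R/\mu_\epsilon\}}U^{2_s^*}\to0$ pointwise while being dominated by $U^{2_s^*}\in L^1(\mathbb{R}^N)$ (recall $\|U\|_{2_s^*}^{2_s^*}=S^{N/2s}$); by dominated convergence the second term tends to $0$ as well.

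Putting the two estimates together yields $\int_{|z-x_\epsilon|\geq R}u_\epsilon^{2_s^*}\,dz\to0$ as $\epsilon\to0^+$, which is in fact stronger than the asserted $\limsup$. The only delicate point is that the tail region $\{|y|\geq R/\mu_\epsilon\}$ recedes to infinity, so that neither the $L^{2_s^*}$-convergence of $v_\epsilon$ alone nor the decay of $U$ alone is enough — both must be combined, exactly as above — while the scaling prefactor is kept under control precisely by $\mu_\epsilon^{\epsilon}\to1$.
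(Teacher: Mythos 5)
Your proof is correct and follows essentially the same route as the paper: rescale to $v_\epsilon$, use the identity for the exponent so that the prefactor is $(\mu_\epsilon^{\epsilon})^{\text{bounded}}\to 1$ by Lemma~\ref{conver2}, and kill the tail integral by combining $\|v_\epsilon-U\|_{2_s^*}\to 0$ with $R/\mu_\epsilon\to+\infty$. The only difference is presentational: the paper argues by contradiction along a sequence, while you give the direct estimate and spell out the splitting $v_\epsilon^{2_s^*}\le 2^{2_s^*-1}\bigl(|v_\epsilon-U|^{2_s^*}+U^{2_s^*}\bigr)$ that the paper leaves implicit.
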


\begin{proof}The proof is similar to Lemma \ref{conver3}. Suppose that the claim is not true. Then there exist a sequence $\epsilon_j\rightarrow\epsilon_0$ such that
 \begin{equation} \label{cover5}
   \begin{split}
  \int_{|x-x_{\epsilon_j}|\geq R} u_{\epsilon_j}^{2_s^*}dx \geq \delta,
 \end{split}
 \end{equation}
for some $\delta>0$ and $j=1,2,\cdots$.

  The proof of the case $\epsilon_0>0$ is similar  to Lemma \ref{conver3}. For  $\epsilon_0=0$, we have
 \begin{equation} \label{cover5+}
   \begin{split}
  \delta \leq& \int_{|x-x_{\epsilon_j}|\geq R} u_{\epsilon_j}^{2_s^*}dx
   =\mu_{\epsilon_j}^{N-\frac{2s2_s^*}{2_s^*-2-\epsilon_j}}\int_{|x|\geq \frac{R}{\mu_{\epsilon_j}}} v_{\epsilon_j}^{2_s^*}(x)dx\\
   = &(\mu_{\epsilon_j}^{\epsilon_j})^ {\frac{N(N-2)}{4s-N\epsilon_j+2s\epsilon_j}}\int_{|x|\geq \frac{R}{\mu_{\epsilon_j}}} v_{\epsilon_j}^{2_s^*}(x)dx\\
   \rightarrow &0, \text{ as } j\rightarrow\infty,
 \end{split}
 \end{equation}
since $\mu_{\epsilon_j}^{\epsilon_j}\rightarrow 1$, $\frac{R}{\mu_{\epsilon_j}}\rightarrow +\infty$ and $v_{\epsilon_j}\rightarrow U$ in $L^{2_s^*}(\mathbb{R}^N)$ as $j\rightarrow \infty$.

\end{proof}

\noindent The following lemmas will play an important role in our analysis.
\begin{lemma} \label{DEC0} Assume that $\{x_\epsilon\}$ stays bounded. Then, there exist    constants $C, R>0$ independent of $\epsilon$, such that
 \begin{equation}\label{decay0}
   \begin{split}
  |u_\epsilon(x)|\leq \frac{C}{|x|^{N+2s}},\ \ \text{for }\ |x|\geq R.
   \end{split}
 \end{equation}

\end{lemma}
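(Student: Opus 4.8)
The plan is to establish the decay $|u_\epsilon(x)|\le C|x|^{-(N+2s)}$ for $|x|$ large by constructing a suitable supersolution to the equation satisfied by $u_\epsilon$ far from the origin, and then invoking a comparison principle for the fractional Laplacian. The starting observation is that since $\{x_\epsilon\}$ is bounded, the concentration lemma (Lemma \ref{conver3}) gives $\int_{|x|\ge R}u_\epsilon^{2^*_s}\,dx\to 0$ uniformly in $\epsilon$ as $R\to\infty$. Combining this with the uniform $L^\infty$ bound (Lemma \ref{unbound00}) on any compact set away from the blow-up point, and the local estimate Theorem \ref{TH5} applied with $a(x)=u_\epsilon^{2^*_s-2-\epsilon}-V(x)$ (whose $L^t_{loc}$-norm on annuli $\{R\le|x|\le 2R\}$ is small, uniformly in $\epsilon$, for $R$ large), we first obtain a uniform pointwise smallness: there is $R_0>0$ and $C_0$ with $0\le u_\epsilon(x)\le C_0$ for $|x|\ge R_0$, and in fact $u_\epsilon(x)\to 0$ as $|x|\to\infty$ uniformly in $\epsilon$. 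In particular, for $|x|\ge R_1$ with $R_1$ large enough, $u_\epsilon^{2^*_s-2-\epsilon}(x)\le \tfrac{V_0}{2}$, so that from \eqref{maineq0},
\begin{equation*}
(-\Delta)^s u_\epsilon(x) = u_\epsilon^{2^*_s-1-\epsilon}(x)-V(x)u_\epsilon(x)\le -\tfrac{V_0}{2}\,u_\epsilon(x)\le 0,\qquad |x|\ge R_1.
\end{equation*}

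Next I would produce the barrier. The natural candidate is $W(x):=C|x|^{-(N+2s)}$ for $|x|\ge R_1$, suitably extended (e.g. $W\equiv$ const on $B_{R_1}$ so that $W\ge \|u_\epsilon\|_\infty$ there, which is harmless since we only need $W\ge u_\epsilon$ on $B_{R_1}$ and the sign of $(-\Delta)^s W$ on the exterior region). The key computation is the standard one: for $|x|\ge 2R_1$ one has $(-\Delta)^s\big(|x|^{-(N+2s)}\big)\ge c\,|x|^{-(N+4s)}$ for a positive constant $c=c(N,s)$ — this is the well-known fact that $|x|^{-(N+2s)}$ is, up to lower-order terms, a "fractional-harmonic-like" profile with the borderline decay, and a direct evaluation of the principal-value integral (splitting into $|y-x|\le |x|/2$ and its complement) yields a strictly positive leading term of order $|x|^{-(N+4s)}$. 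Since $|x|^{-(N+4s)}\gg \tfrac{V_0}{2}|x|^{-(N+2s)}$ is false for large $|x|$ — wait, it is the reverse: $|x|^{-(N+4s)}$ is \emph{smaller}. So instead I compare against $W$ directly: $(-\Delta)^s W + \tfrac{V_0}{2}W\ge \tfrac{V_0}{2}C|x|^{-(N+2s)} - C'|x|^{-(N+4s)}\ge 0$ for $|x|\ge R_2$, choosing $R_2$ large. Thus $W$ is a supersolution of the linear inequality $(-\Delta)^s v+\tfrac{V_0}{2}v\ge 0$ in $\{|x|\ge R_2\}$, while $u_\epsilon$ is a subsolution of the same; choosing the constant $C$ in $W$ so large that $W\ge u_\epsilon$ on $\{|x|\le R_2\}$ (possible by the uniform bound $u_\epsilon\le C_0$ there) and noting both go to $0$ at infinity, the maximum principle for $(-\Delta)^s+\tfrac{V_0}{2}$ on the exterior domain (see e.g.\ the comparison principle used implicitly for \eqref{maineq01-1}) gives $u_\epsilon\le W$ on all of $\mathbb{R}^N$, which is \eqref{decay0} with $R=R_2$.

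The main obstacle I anticipate is twofold. First, the \emph{uniformity in $\epsilon$} of the intermediate pointwise bound $u_\epsilon\le C_0$ on $\{|x|\ge R_0\}$: this requires feeding the uniform-in-$\epsilon$ $L^{2^*_s}$ tail decay from Lemma \ref{conver3} into Theorem \ref{TH5}, and checking that the constant there depends on $\|a\|_{L^t_{loc}}$ only through a quantity that stays bounded (indeed small) as $\epsilon\to 0$ on the relevant annuli — since $u_\epsilon^{2^*_s-2-\epsilon}$ has small $L^t$-norm on $\{R\le|x|\le2R\}$ by Hölder from the tail bound, and $V$ is bounded, this goes through, but it must be stated carefully. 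Second, the barrier computation: one must verify the sign of $(-\Delta)^s W$ rigorously including the contribution from the region near the origin where $W$ has been modified, and confirm that the exterior-domain comparison principle applies to $(-\Delta)^s + c$ with $c>0$ (this is standard but should be cited). A cleaner alternative, which I would mention as a fallback, is to bootstrap: starting from an integrable decay coming from $u_\epsilon\in H^s\cap L^\infty$ together with the equation $u_\epsilon = \mathcal{G}_s*(u_\epsilon^{2^*_s-1-\epsilon})$ where $\mathcal{G}_s$ is the Bessel-type kernel for $(-\Delta)^s+V_0$ (which itself decays like $|x|^{-(N+2s)}$), and iterating the convolution estimate to upgrade the decay of $u_\epsilon$ to that of the kernel — this directly pins the exponent $N+2s$ and is perhaps the most transparent route.
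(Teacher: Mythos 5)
Your first half (feeding the uniform tail bound of Lemma \ref{conver3} into the local estimate of Theorem \ref{TH5} to get $\sup_\epsilon u_\epsilon(y)\to 0$ as $|y|\to\infty$) is exactly what the paper does; from that point the paper simply quotes the kernel-based decay estimate of Lemma C.2 in \cite{FRLE} and argues the $\epsilon$-uniformity, whereas you attempt a self-contained barrier. The gap is in your key barrier computation. Since $N+2s>N$, the pure power $|x|^{-(N+2s)}$ is not locally integrable, so $(-\Delta)^s$ must be applied to the truncated profile $W$; then, at a point $x$ with $|x|$ large, the principal-value integral picks up the nonlocal contribution of the region $\{|y|\le |x|/2\}$, where $W$ is of order one, and this contribution is
$c_{N,s}\int_{|y|\le |x|/2}\frac{W(x)-W(y)}{|x-y|^{N+2s}}\,dy\simeq -c_{N,s}\,\|W\|_{L^1}\,|x|^{-(N+2s)}$,
a \emph{negative} term of the \emph{same} order $|x|^{-(N+2s)}$ as $\tfrac{V_0}{2}W$, not $O(|x|^{-(N+4s)})$. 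Hence the inequality $(-\Delta)^sW+\tfrac{V_0}{2}W\ge \tfrac{V_0}{2}C|x|^{-(N+2s)}-C'|x|^{-(N+4s)}\ge 0$ is unjustified as stated, and enlarging the multiplicative constant $C$ cannot repair it, because both the good term and the bad term scale linearly in $C$.

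The step is fixable, but by a different mechanism: either dilate the profile, $W_\lambda(x)=W(x/\lambda)$, so that $(-\Delta)^sW_\lambda$ carries a factor $\lambda^{-2s}$ relative to $W_\lambda$ and the negative tail term is absorbed by $\tfrac{V_0}{2}W_\lambda$ once $\lambda^{2s}$ is large; or take as barrier the Bessel-type kernel of $(-\Delta)^s+\tfrac{V_0}{2}$, which solves the exterior equation exactly and has precisely the decay $|x|^{-(N+2s)}$; or run your own fallback, the convolution bootstrap $u_\epsilon=\mathcal{K}\ast f_\epsilon$, which is in substance the proof of Lemma C.2 in \cite{FRLE} that the paper cites here and is the very argument the paper writes out in the unbounded case (Lemma \ref{DEC}). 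With one of these replacements your proof closes, and it would in fact make the $\epsilon$-uniformity of $C$ and $R$ more transparent than the paper's citation, since your smallness radius and barrier are $\epsilon$-independent; just state the exterior comparison principle in its nonlocal form, i.e.\ with the ordering imposed on the whole complement $\{|x|\le R_2\}$ (as you do) rather than on the topological boundary only.
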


\begin{proof}

 We observe that
$$(-\Delta)^s u_\epsilon\leq u_{\epsilon}^{2_s^*-2-\epsilon}u_\epsilon\ .$$
Since
  $u_{\epsilon}^{2_s^*-2-\epsilon}\in L_{loc}^t(\mathbb{R}^N)$ for some $t>\frac{N}{2s}$,  from Theorem \ref{TH5} we have
 \begin{equation}\label{decay0-0}
   \begin{split}
  \max_{B_r(y)} u_\epsilon(x)\leq C\left(\int_{B_R(y)}|u_\epsilon|^{2_s^*}dx\right)^{\frac{1}{2_s^*}},\ \ \forall y\in \mathbb{R}^N, 0<r<R,
   \end{split}
 \end{equation} where $C$ is independent of $\epsilon$.
Thus,
we conclude from
(\ref{cover4}) and (\ref{decay0-0}) that
 \begin{equation}\label{decay01}
   \begin{split}
  \sup_{\epsilon \in \left(0,\frac{2_s^*-2}{2}\right)} u_\epsilon(y)\rightarrow 0, \ \text{ as } |y|\rightarrow +\infty.
   \end{split}
 \end{equation}
This fact together with Lemma C.2 in \cite{FRLE} imply 
 \begin{equation}\label{decay02}|u_\epsilon(x)|\leq \frac{C}{|x|^{N+2s}}. \end{equation}
Actually, we   first fix $\epsilon>0$ to applying Lemma C.2 in \cite{FRLE}, and then we take the supremum with respect to $\epsilon$. Finally,  from  Lemmas \ref{SS0} and \ref{SS2},  we get
(\ref{decay02}). See also \cite{MFEV15}.

\end{proof}

\begin{lemma} \label{DEC} Suppose that $\{x_\epsilon\}$ is unbounded. Then there exists   a constant $C>0$ independent of $\epsilon$ such that for small $\epsilon>0$,
 \begin{equation}\label{decay0+}
   \begin{split}
  |u_\epsilon(x)|\leq \frac{C}{| x-x_\epsilon|^{N+2s}},\ \ \text{for }\ |x-x_\epsilon|\geq R,
   \end{split}
 \end{equation}
for any $R>0$.

\end{lemma}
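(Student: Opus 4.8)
The plan is to replay the argument of Lemma~\ref{DEC0}, the only structural change being that, since $\{x_\epsilon\}$ is unbounded, one must recenter the equation at the maximum points $x_\epsilon$ rather than at the origin.

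First I would set $w_\epsilon(x):=u_\epsilon(x+x_\epsilon)$, so that $w_\epsilon$ attains its maximum $\|u_\epsilon\|_\infty$ at the origin and
$$(-\Delta)^s w_\epsilon+V(x+x_\epsilon)\,w_\epsilon=w_\epsilon^{2_s^*-1-\epsilon}\qquad\text{in }\mathbb{R}^N.$$
By $(V_1)$ the shifted potential still obeys $V_0\le V(x+x_\epsilon)\le V_\infty$ uniformly in $\epsilon$, which is all that will be used — in particular the (possibly divergent) location of $x_\epsilon$ plays no further role — and the asserted estimate is equivalent to $w_\epsilon(x)\le C|x|^{-(N+2s)}$ for $|x|\ge R$. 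From the equation one has $(-\Delta)^s w_\epsilon\le w_\epsilon^{2_s^*-2-\epsilon}w_\epsilon$, with $w_\epsilon^{2_s^*-2-\epsilon}\in L_{loc}^{t}(\mathbb{R}^N)$ for a suitable $t>N/(2s)$; since $\|w_\epsilon\|_{2_s^*}=\|u_\epsilon\|_{2_s^*}$ is bounded uniformly in $\epsilon$ (Lemma~\ref{SS2} and Sobolev embedding), the $L^t$-norms of $w_\epsilon^{2_s^*-2-\epsilon}$ over unit balls are bounded independently of $\epsilon$ and of the center, so Theorem~\ref{TH5} (applied on balls $B_r(y)\subset B_R(y)$, by translation invariance of $(-\Delta)^s$) gives
$$\max_{B_r(y)}w_\epsilon\le C\Big(\int_{B_R(y)}w_\epsilon^{2_s^*}\,dx\Big)^{1/2_s^*},\qquad y\in\mathbb{R}^N,\ 0<r<R,$$
with $C$ independent of $\epsilon$ and $y$.

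Next I would feed Lemma~\ref{conver3+} into this local bound: given $\eta>0$, choosing $R_\eta$ with $\int_{|x|\ge R_\eta}w_\epsilon^{2_s^*}\,dx<\eta$ for all sufficiently small $\epsilon$ yields $w_\epsilon(y)\le C\eta^{1/2_s^*}$ for $|y|\ge R_\eta+R$ and such $\epsilon$; thus $w_\epsilon\to0$ at infinity, uniformly over small $\epsilon$ — the recentered analogue of \eqref{decay01}. Consequently there is $R_0>0$ such that, for all small $\epsilon$, $w_\epsilon^{2_s^*-2-\epsilon}(x)\le V_0/2$ when $|x|\ge R_0$, so that $w_\epsilon$ solves
$$(-\Delta)^s w_\epsilon+\big(V(x+x_\epsilon)-w_\epsilon^{2_s^*-2-\epsilon}\big)w_\epsilon=0\qquad\text{in }\{|x|\ge R_0\},$$
with zeroth-order coefficient bounded below by $V_0/2>0$. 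Applying Lemma~C.2 of \cite{FRLE} — first for a fixed $\epsilon$ and then taking the supremum over small $\epsilon$, exactly as in the proof of Lemma~\ref{DEC0}, the uniformity of the resulting constant being guaranteed by the uniform decay just obtained and by the uniform bounds of Lemmas~\ref{SS0} and \ref{SS2} — yields $w_\epsilon(x)\le C|x|^{-(N+2s)}$ for $|x|\ge R$. Undoing the translation gives $|u_\epsilon(x)|\le C|x-x_\epsilon|^{-(N+2s)}$ for $|x-x_\epsilon|\ge R$, with $C$ independent of $\epsilon$.

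The main obstacle is, as in Lemma~\ref{DEC0}, keeping every constant independent of $\epsilon$: the Moser constant in Theorem~\ref{TH5} degenerates as the admissible exponent $t$ approaches the critical value $N/(2s)$, i.e.\ as $\epsilon\to0^+$, so $t=t(\epsilon)$ must be chosen so as to keep $\|w_\epsilon^{2_s^*-2-\epsilon}\|_{L^t(B_1(y))}$ uniformly bounded without letting $t$ reach $N/(2s)$ too quickly; and one must check that the comparison-function (barrier) argument underlying Lemma~C.2 of \cite{FRLE} can be carried out with an $\epsilon$-independent supersolution once the uniform smallness of $w_\epsilon$ at infinity is in hand. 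Both points are resolved precisely as in the proof of Lemma~\ref{DEC0}, which is why only the differences are recorded above.
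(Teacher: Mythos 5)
Your argument is correct (at the level of rigor the paper itself adopts), but it is a genuinely different route from the paper's proof of Lemma \ref{DEC}. You recenter at $x_\epsilon$ and rerun the bounded-case machinery of Lemma \ref{DEC0}: Theorem \ref{TH5} together with Lemma \ref{conver3+} gives uniform smallness of $w_\epsilon=u_\epsilon(\cdot+x_\epsilon)$ outside fixed balls, the nonlinear term is absorbed into the zeroth-order coefficient using $V\ge V_0$, and Lemma C.2 of \cite{FRLE} then yields the $|x|^{-(N+2s)}$ decay in the translated frame; since $(-\Delta)^s$ is translation invariant and $(V_1)$ gives $V_0\le V(\cdot+x_\epsilon)\le V_\infty$ uniformly, the divergence of $x_\epsilon$ indeed plays no role. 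The paper argues differently: after the dilation $w_\epsilon(x)=u_\epsilon\bigl(V_0^{-\frac{1}{2s}}x\bigr)$ it rewrites the equation as $(-\Delta)^s w_\epsilon+w_\epsilon=f_\epsilon$ with $f_\epsilon=\bigl[1-V V_0^{-1}\bigr]w_\epsilon+V_0^{-1}w_\epsilon^{2_s^*-1-\epsilon}$, uses the Bessel-kernel representation $w_\epsilon=\mathcal{K}\ast f_\epsilon$ from \cite{PEAJ12}, discards the region where $f_\epsilon\le 0$ (there $V\ge V_0$ and $u_\epsilon$ is uniformly small by (\ref{cover4+})), and estimates the two surviving contributions by hand with the kernel bounds $(K_1)$--$(K_2)$: the concentration region $\{|V_0^{-\frac{1}{2s}}y-x_\epsilon|\le R\}$ produces the $|x-x_\epsilon|^{-(N+2s)}$ decay via H\"older, while the fixed ball $\{|y|\le R_1\}$ is harmless precisely because $|x_\epsilon|\to\infty$ eventually places it far from the concentration point. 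What your approach buys is economy and unification: the bounded and unbounded cases become literally one proof after recentering. What the paper's approach buys is a self-contained, explicit kernel computation that avoids invoking the decay lemma of \cite{FRLE} a second time and keeps the source of the $|x-x_\epsilon|^{-(N+2s)}$ rate visible. Finally, the two caveats you flag (degeneration of the Moser constant as $t=\frac{2_s^*}{2_s^*-2-\epsilon}\downarrow\frac{N}{2s}$, and $\epsilon$-independence of the constant in Lemma C.2 of \cite{FRLE} despite $\|u_\epsilon\|_\infty\to\infty$) are exactly the points left implicit in the paper's own proof of Lemma \ref{DEC0}, so your proof is no less complete than the argument it parallels; if you want full uniformity, the smallness of $\int_{|x-x_\epsilon|\ge R}u_\epsilon^{2_s^*}dx$ from Lemma \ref{conver3+} lets you run the local estimate with a critical-norm smallness (Brezis--Kato type) argument instead of the raw $L^t$ bound.
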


\begin{proof}

From (\ref{cover4+}), for any $\delta, R>0$, there exists a small $\epsilon_0>0$ such that if $0<\epsilon<\epsilon_0$ and $|y-x_\epsilon|\geq R$, then
 \begin{equation}\label{decay06}
   \begin{split}
  u_\epsilon(y)\leq \delta.
   \end{split}
 \end{equation}

\noindent  Let $w_\epsilon(x)=u_\epsilon(y)$, $y=V_0^{-\frac{1}{2s}} x$. Then $w_\epsilon(x)$ enjoys the following 
  \begin{equation}\label{decay07}
   \begin{split}
  (-\Delta)^s w_\epsilon+V(y)V_0^{-1}w_\epsilon= V_0^{-1} w_\epsilon^{2_s^*-1-\epsilon}.
   \end{split}
 \end{equation}

\noindent Furthermore,
by condition $(V_1)$,  if we choose $\delta>0$ sufficiently small and $R_1>0$ large enough, we have 
  \begin{equation}\label{decay07-1}(-\Delta)^s w_\epsilon+w_\epsilon=f_\epsilon(x):=\left[1-V(y)V_0^{-1}\right]w_\epsilon(x)+V_0^{-1}w_\epsilon^{2_s^*-1-\epsilon}(x)\leq 0 \end{equation}
for small $\epsilon>0$, $|x|\geq R_1$ and $|V_0^{-\frac{1}{2s}} x-x_\epsilon|\geq R$.

\noindent Borrowing some results from \cite{PEAJ12}, we also have 
 \begin{equation}\label{decay07+1}w_\epsilon(x)=\mathcal{K}\ast f_\epsilon(x)=\int_{\mathbb{R}^N}\mathcal{K}(x-y)f_\epsilon(y)dy, \end{equation}
where $\mathcal{K}$ is the Bessel kernel and which enjoys the following properties:
\begin{itemize}
\item[$(K_1)$] $\mathcal{K}$ is positive, radially symmetric and smooth in $\mathbb{R}^N\setminus \{0\}$;
\item[$(K_2)$] There is $C_1, C_2>0$ such that
 \begin{equation}\label{k1}\mathcal{K}(x)\leq \frac{C_1}{|x|^{N+2s}},\ \ \text{if }\ |x|\geq 1 \end{equation} and
 \begin{equation}\label{k2}\mathcal{K}(x)\leq \frac{C_2}{|x|^{N-2s}},\ \ \text{if }\ |x|\leq 1. \end{equation}
\end{itemize}

\noindent From (\ref{decay07-1}) and (\ref{decay07+1}) we have
 \begin{equation}\label{decay08}
   \begin{split}
 w_\epsilon(x)\leq \int_{\{|V_0^{-\frac{1}{2s}} y-x_\epsilon|\leq R, |y|\geq R_1\}}\mathcal{K}(x-y)f_\epsilon(y)dy+\int_{ \{ |y|\leq R_1\}}\mathcal{K}(x-y)f_\epsilon(y)dy
   \end{split}
 \end{equation}
Note that $|V_0^{-\frac{1}{2s}}y-x_\epsilon|>R \Leftrightarrow |y-V_0^{\frac{1}{2s}}x_\epsilon|>V_0^{\frac{1}{2s}}R $. Since $\{x_\epsilon\}$ is unbounded, then there exists $0<\epsilon_1<\epsilon_0$ such  that $|x_\epsilon|\geq R+R_1$ for $0<\epsilon<\epsilon_1$. Thus, for $|y|\leq R_1$, we get  $|y-x_\epsilon|\geq |x_\epsilon|-|y|\geq R$.
So, from (\ref{decay07}) and  (\ref{decay07-1}),  we obtain
 \begin{equation}\label{decay09}
   \begin{split}
 \int_{ \{ |y|\leq R_1\}}\mathcal{K}(x-y)f_\epsilon(y)dy\leq C\int_{ \{ |y|\leq R_1\}}\mathcal{K}(x-y)dy.
   \end{split}
 \end{equation}

\noindent By (\ref{k1}) and (\ref{k2}),   we have
  \begin{equation}\label{decay09}
   \begin{split}
 \int_{ \{ |y|\leq R_1\}}\mathcal{K}(x-y)dy= &\int_{ \{ |y|\leq R_1, |x-y|<1\}}\mathcal{K}(x-y)dy+\int_{ \{ |y|\leq R_1, |x-y|\geq1\}}\mathcal{K}(x-y)dy\\
  \leq& \frac{N\omega_{N-1}C_1}{2s}+C_2 N\omega_{N-1}R_1^N.
   \end{split}
 \end{equation}

\noindent Besides,  for  $R_2>R$,   $|x-V_0^{\frac{1}{2s}}x_\epsilon|>V_0^{\frac{1}{2s}}R_2$ and  $|y-V_0^{\frac{1}{2s}}x_\epsilon|\leq V_0^{\frac{1}{2s}}R$, we get $|x-y|\geq \frac{R_2-R}{R}|x-V_0^{\frac{1}{2s}}x_\epsilon|$ and thus
\begin{equation}\label{decay10}
   \begin{split}
&\int_{\{|V_0^{-\frac{1}{2s}} y-x_\epsilon|\leq R, |y|\geq R_1\}}\mathcal{K}(x-y)f_\epsilon(y)dy\\& \leq C\int_{\{|V_0^{-\frac{1}{2s}} y-x_\epsilon|\leq R, |y|\geq R_1\}}\mathcal{K}(x-y)u_\epsilon^{2_s^*-1-\epsilon}(y)dy\\
&\leq C \|u_\epsilon\|_{2_s^*-\epsilon}^{2_s^*-1-\epsilon}\left(\int_{\{|V_0^{-\frac{1}{2s}} y-x_\epsilon|\leq R, |y|\geq R_1\}}\mathcal{K}(x-y)^{\frac{2N-(N-2s)\epsilon}{N-2s}} dy\right)^{\frac{N-2s}{2N-(N-2s)\epsilon}}\\
&\leq \frac{C}{|V_0^{-\frac{1}{2s}}x-x_\epsilon|^{N+2s}}.
   \end{split}
 \end{equation}
Combining (\ref{decay08})-- (\ref{decay10}), for    $|V_0^{-\frac{1}{2s}}x-x_\epsilon|>R_2$ and small $\epsilon>0$, we have
$$|w_\epsilon(x)|\leq \frac{C}{|V_0^{-\frac{1}{2s}}x-x_\epsilon|^{N+2s}}.$$
That is,
$$|u_\epsilon(x)|\leq  \frac{C}{|x-x_\epsilon|^{N+2s}},\ \ \ |x-x_\epsilon|>R_2.$$

\noindent Since $R$ is arbitrary, as well as $R_2$ is arbitrary, the proof is complete.

\end{proof}

\begin{remark}
By using standard comparison arguments as in \cite{PEAJ12}, we can prove results similar to  (\ref{cover4}) and  (\ref{cover4+}). However, the constant $C$ obtained there may depend on $\epsilon$.
\end{remark}

\begin{lemma}\label{comp} These exists a positive constant $C$ independent of $\epsilon$, such that
 \begin{equation}\label{compare}v_\epsilon(x)\leq CU(x),\ \ x\in \mathbb{R}^N. \end{equation}
\end{lemma}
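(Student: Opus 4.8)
\textit{Proof (sketch).} It suffices to produce a constant $C$, independent of $\epsilon$, with $v_\epsilon(x)\le C(1+|x|)^{2s-N}$ for all $x$, since $U(x)=(1+|x|^2/\lambda^2)^{(2s-N)/2}\asymp (1+|x|)^{2s-N}$. The first step is to show that $v_\epsilon\to 0$ uniformly as $|x|\to\infty$, uniformly for small $\epsilon$. Indeed, from $(\ref{res1})$ and $v_\epsilon\le 1$ we have $(-\Delta)^s v_\epsilon\le v_\epsilon^{2_s^*-2-\epsilon}v_\epsilon$ with $a_\epsilon:=v_\epsilon^{2_s^*-2-\epsilon}\le 1$, hence uniformly bounded in $L^t_{loc}(\mathbb{R}^N)$ for every $t$; Theorem \ref{TH5} then gives $\sup_{B_1(y)}v_\epsilon\le C\big(\int_{B_2(y)}v_\epsilon^{2_s^*}\,dx\big)^{1/2_s^*}$ with $C$ independent of $\epsilon$ and $y$, while by Lemma \ref{conver2} and the vanishing tail of $U^{2_s^*}$ one has $\int_{|x|\ge R}v_\epsilon^{2_s^*}\le \|v_\epsilon-U\|_{2_s^*}^{2_s^*}+\int_{|x|\ge R}U^{2_s^*}\to 0$ as $R\to\infty$, uniformly for $\epsilon$ small. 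In particular, on any fixed ball $B_{R_1}$ we trivially have $v_\epsilon\le 1\le C(R_1)(1+|x|)^{2s-N}$, so only the region $|x|\ge R_1$ requires work.

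For $|x|\ge R_1$ the plan is to pass to the Kelvin transform $\hat v_\epsilon(x):=|x|^{2s-N}v_\epsilon(x/|x|^2)$. By conformal covariance of $(-\Delta)^s$, $\hat v_\epsilon$ satisfies, in $\mathbb{R}^N\setminus\{0\}$,
\[
(-\Delta)^s\hat v_\epsilon+\mu_\epsilon^{2s}|x|^{-4s}V\!\big(x_\epsilon+\mu_\epsilon x|x|^{-2}\big)\hat v_\epsilon=|x|^{-\epsilon(N-2s)}\hat v_\epsilon^{2_s^*-1-\epsilon},
\]
hence $(-\Delta)^s\hat v_\epsilon\le\big(|x|^{-\epsilon(N-2s)}\hat v_\epsilon^{2_s^*-2-\epsilon}\big)\hat v_\epsilon$. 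On $\{|x|\ge 1/R_1\}$ one has $\hat v_\epsilon\le R_1^{N-2s}$ uniformly in $\epsilon$, and for each fixed $\epsilon$ the decay Lemmas \ref{DEC0}--\ref{DEC} show that $\hat v_\epsilon$ extends continuously across the origin with $\hat v_\epsilon(0)=0$, so $\hat v_\epsilon$ is a bounded continuous subsolution on all of $\mathbb{R}^N$. The desired bound $v_\epsilon(z)\le C|z|^{2s-N}$ for $|z|\ge R_1$ is then equivalent to a bound on $\sup_{B_{1/(2R_1)}}\hat v_\epsilon$ that is uniform for small $\epsilon$, which I would obtain by a De~Giorgi--Nash--Moser iteration (in the spirit of Theorem \ref{TH5}) applied to the above inequality: the coefficient is controlled because $\|\hat v_\epsilon\|_{2_s^*}=\|v_\epsilon\|_{2_s^*}$ is uniformly bounded, so $\hat v_\epsilon^{2_s^*-2-\epsilon}\in L^{2_s^*/(2_s^*-2-\epsilon)}_{loc}$ uniformly, an exponent which strictly exceeds $N/(2s)$, while the weight $|x|^{-\epsilon(N-2s)}$ is only mildly singular since $\epsilon(N-2s)<2s$ for $\epsilon<2_s^*-2$. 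Tracking the loss that this weight induces at each Moser step and using $\mu_\epsilon^{\epsilon}\to 1$ (Lemma \ref{conver2}) keeps the resulting series of exponents bounded uniformly in small $\epsilon$. Combining this with the previous paragraph yields $v_\epsilon\le C(1+|x|)^{2s-N}\le CU$ with $C$ independent of $\epsilon$.

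The delicate point is precisely this last iteration: the weight $|x|^{-\epsilon(N-2s)}$ sits exactly at the borderline at which $\hat v_\epsilon^{2_s^*-2}$ barely fails to lie in $L^{N/(2s)}$, so one must exploit both the slightly improved integrability $2_s^*/(2_s^*-2-\epsilon)>N/(2s)$ of $\hat v_\epsilon^{2_s^*-2-\epsilon}$ and the $\epsilon$-smallness of the exponent of the weight, and verify that no constant degenerates as $\epsilon\to 0$. (An alternative route, representing $v_\epsilon\le c_{N,s}\,|\cdot|^{2s-N}\!*v_\epsilon^{2_s^*-1-\epsilon}$ via the fractional maximum principle — legitimate because the mass term $\mu_\epsilon^{2s}V$ in $(\ref{res1})$ has the favorable sign — meets the same obstruction, since $|\cdot|^{2s-N}\!*(1+|\cdot|)^{2s-N}$ is not $\lesssim(1+|\cdot|)^{2s-N}$ but only $\lesssim|x|^{2s}(1+|x|)^{2s-N}$, so there one has to recover the missing decay from the superlinearity of $t\mapsto t^{2_s^*-1-\epsilon}$ together with the uniform smallness of $v_\epsilon$ at infinity proved in the first step.)
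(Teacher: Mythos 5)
Your route is the same one the paper takes: pass to the Kelvin transform (your $\hat v_\epsilon$ is the paper's $\Phi_\epsilon$, satisfying exactly (\ref{compare1})), note that the bound is trivial on fixed balls, and try to get an $\epsilon$-uniform bound for the transformed function near the origin from the local boundedness estimate of Theorem \ref{TH5} together with the uniform $L^{2_s^*}$ control. But the proposal stops precisely at the step that constitutes the proof. To apply Theorem \ref{TH5} with a constant independent of $\epsilon$ you must exhibit a fixed $t>\frac{N}{2s}$ and an $\epsilon$-independent bound for the $L^t$ norm over a neighborhood of the origin of the coefficient $a(x)=|x|^{(2s-N)\epsilon}\hat v_\epsilon^{\,2_s^*-2-\epsilon}$. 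You never establish this; you only observe that $\hat v_\epsilon^{\,2_s^*-2-\epsilon}$ is uniformly in $L^{2_s^*/(2_s^*-2-\epsilon)}_{loc}$ and that this exponent exceeds $\frac{N}{2s}$. That margin is $O(\epsilon)$ and collapses as $\epsilon\to0^+$: in the iteration behind Theorem \ref{TH5} the ratio $\frac{2_s^*(t-1)}{2t}$ tends to $1$ as $t\downarrow\frac{N}{2s}$, so constants produced this way degenerate, which is exactly the point you flag as \emph{delicate} and leave unverified. Likewise, invoking Lemmas \ref{DEC0}--\ref{DEC} only to say that $\hat v_\epsilon$ is continuous at the origin with $\hat v_\epsilon(0)=0$ for each fixed $\epsilon$ yields no uniform information, since the constant in that statement carries a negative power of $\mu_\epsilon$.

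What closes the gap in the paper is a quantitative two-region estimate of $\|a\|_{L^t}$ near the origin. On $\{\mu_\epsilon^2\le|x|\le r\}$ the weight is controlled by $|x|^{(2s-N)\epsilon}\le(\mu_\epsilon^{\epsilon})^{2(2s-N)}$, which stays bounded because $\mu_\epsilon^{\epsilon}\to1$ (Lemma \ref{conver2}), and the remaining factor is handled by H\"older against the uniform bound on $\|\Phi_\epsilon\|_{2_s^*}=\|v_\epsilon\|_{2_s^*}$. On the inner region $\{|x|\le\mu_\epsilon^2\}$, where the point $x_\epsilon+\mu_\epsilon x/|x|^2$ lies at distance at least $\mu_\epsilon^{-1}$ from $x_\epsilon$, the decay Lemmas \ref{DEC0}/\ref{DEC} are used \emph{quantitatively}, giving $\Phi_\epsilon(x)\le C\mu_\epsilon^{\frac{2s}{2_s^*-2-\epsilon}-(N+2s)}|x|^{4s}$; inserting this pointwise bound and integrating shows $\int_{|x|\le\mu_\epsilon^2}a^t\,dx\le C$ with $C$ independent of $\epsilon$, the positive power of $|x|$ compensating both the singular weight and the negative power of $\mu_\epsilon$. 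Without this computation (or an equivalent substitute) your Moser iteration has no uniform input, so the argument as written has a genuine gap; the auxiliary steps you do carry out (uniform vanishing of $v_\epsilon$ at infinity, the remark that only $|x|$ large matters, the alternative Riesz-kernel route) are fine but none of them supplies the missing rate.
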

\begin{proof} Note that we do not assume that $\{x_\epsilon\}$ is bounded or unbounded.    From the definition of $v_\epsilon$ and $U$, $v_\epsilon(0)=U(0)=1$, and since $v_\epsilon(x)\in C^{2,\alpha}$, by choosing some large $C$,  (\ref{compare}) holds in a neighborhood of zero. Therefore, it is enough to establish (\ref{compare}) ifor $|x|$ bounded away from zero. For this purpose, let $\Phi_\epsilon(x)$ be the Kelvin transform of $v_\epsilon$, namely 
\begin{equation}\label{compare0}\Phi_\epsilon(x)=|x|^{2s-N}v_\epsilon\left(\frac{x}{|x|^2}\right). \end{equation}
Then, $\Phi_\epsilon$ satisfies
\begin{equation}\label{compare1}
   \begin{split}
 (-\Delta)^s \Phi_\epsilon+\mu_\epsilon^{2s}|x|^{-4s}V\left(x_\epsilon+\mu_\epsilon \frac{x}{|x|^2}\right)\Phi_\epsilon=|x|^{(2s-N)\epsilon}\Phi_\epsilon^{2_s^*-1-\epsilon} \ \ \text{in}\ \ \mathbb{R}^N.
 \end{split}
 \end{equation}
Now, we aim at proving that $\{\Phi_\epsilon\}$ is uniformly bounded with respect to  $\epsilon$ in a neighborhood of zero, and this will imply (\ref{compare}) by (\ref{compare0}).

\noindent From (\ref{compare1}), we obtain
\begin{equation}\label{compare2}
   \begin{split}
 (-\Delta)^s \Phi_\epsilon\leq |x|^{(2s-N)\epsilon}\Phi_\epsilon^{2_s^*-1-\epsilon}:=a(x)\Phi_\epsilon, \ \ a(x)=|x|^{(2s-N)\epsilon}\Phi_\epsilon^{2_s^*-2-\epsilon}.
 \end{split}
 \end{equation}

\noindent {\it Claim:} $a(x)\in  L_{loc}^t(\mathbb{R}^N)$ with some $t>\frac{N}{2s}$.

\noindent Assume for the moment the claim holds true and let us complete the proof.  By Theorem \ref{TH5},
  for any compact set $K$, we have
 \begin{equation}\label{decay00}
   \begin{split}
  \max_{K} \Phi_\epsilon(x)&\leq C\left(\int_{K}|\Phi_\epsilon|^{2_s^*}dx\right)^{\frac{1}{2_s^*}}\leq C\left(\int_{\mathbb{R}^N}|v_\epsilon|^{2_s^*}dx\right)^{\frac{1}{2_s^*}}
  \\&\leq C(\mu_\epsilon^\epsilon)^{\frac{ N}{2_s^*(2_s^*-2-\epsilon)}}\left(\int_{\mathbb{R}^N}|u_\epsilon|^{2_s^*}dx\right)^{\frac{1}{2_s^*}}\leq C.
   \end{split}
 \end{equation}
The last inequality follows from the facts $\mu_\epsilon^\epsilon\rightarrow 1$  and $\|u_\epsilon\|_{2_s^*}\leq C\|u_\epsilon\|_{s,V(x)}\rightarrow CS^{\frac{N}{4s}}$ as $\epsilon\rightarrow 0^+.$

\noindent Thus, it remains to prove the claim. On the one hand, for $r>0$ we get
 \begin{equation}\label{compare3}
   \begin{split}
 \int_{\mu_\epsilon^2\leq |x|\leq r} a(x)^tdx\leq& (\mu_\epsilon^\epsilon)^{2(2s-N)t} \int_{\mu_\epsilon^2\leq |x|\leq r}  \Phi_\epsilon^{(2_s^*-2-\epsilon)t}dx\\
 \leq& (\mu_\epsilon^\epsilon)^{2(2s-N)t}|B_r|^{1-\frac{(2_s^*-2-\epsilon)t}{2_s^*}} \left(\int_{B_r}  \Phi_\epsilon^{2_s^*}dx\right)^{\frac{(2_s^*-2-\epsilon)t}{2_s^*}}
  \leq C,
 \end{split}
 \end{equation}
since  $\mu_\epsilon^\epsilon\rightarrow 1$ as $\epsilon\rightarrow 0^+$ and $\Phi_\epsilon\rightarrow \overline{U}$ in $L^{2_s^*}(\mathbb{R}^N)$, where $\overline{U}(x)=|x|^{2s-N}U\left(\frac{x}{|x|^2}\right)$.

On the other hand, if $\{x_\epsilon\}$ is bounded and $|x|\leq \frac{\mu_\epsilon}{R-|x_\epsilon|}$, or  if $\{x_\epsilon\}$ is unbounded and $|x|\leq \frac{\mu_\epsilon}{R}$, by Lemmas \ref{DEC0} and \ref{DEC},  we have
 \begin{equation}\label{compare3}
   \begin{split}
  \Phi_\epsilon(x)&=|x|^{2s-N}v_\epsilon\left(\frac{x}{|x|^2}\right)\\
  &=\mu_\epsilon^{\frac{2s}{2_s^*-2-\epsilon}}|x|^{2s-N}u_\epsilon\left(x_\epsilon+\mu_\epsilon\frac{x}{|x|^2}\right)\\
  &\leq  C\mu_\epsilon^{\left[\frac{2s}{2_s^*-2-\epsilon}-(N+2s)\right]}|x|^{4s}.
 \end{split}
 \end{equation}
Thus,   we have
 \begin{equation}\label{compare3}
   \begin{split}
 \int_{|x|\leq \mu_\epsilon^2} a(x)^tdx\leq& C \mu_\epsilon^{\left[\frac{2s}{2_s^*-2-\epsilon}-(N+2s)\right](2_s^*-2-\epsilon)t} \int_{|x|\leq \mu_\epsilon^2}
 |x|^{[4s(2_s^*-2-\epsilon)+(2s-N)\epsilon]t}
 dx\leq C
 \end{split}
 \end{equation}
and the proof is complete.

\end{proof}

\begin{proposition} \label{asy1}
Assume $N>4s$ and suppose that $x_{\epsilon}\rightarrow x_0$ as $\epsilon\rightarrow 0^+$. Then,  $$\lim_{\epsilon\rightarrow 0^+} \epsilon \|u_{\epsilon}\|_{\infty}^{\frac{4s}{N-2}}=A_{N,s}\left[V(x_0)+\frac{1}{2s}x_0\cdot \nabla V(x_0)\right],$$
where $$A_{N,s}= \frac{2^{2(N+1)}N^2\pi^{\frac{N}{2}} \Gamma\left(\frac{N-4s}{2}\right)}{(N-2s)^2\Gamma(N-2s)} S^{-\frac{N}{2s}}  .$$

\end{proposition}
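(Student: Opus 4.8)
The plan is to extract the blow-up rate from the Pohozaev identity of Theorem~\ref{Pohozae} and then to pass to the limit in the rescaled picture, exploiting the pointwise bound $v_\epsilon\le CU$ from Lemma~\ref{comp}. First I would apply Theorem~\ref{Pohozae} to $u_\epsilon$: it is a positive solution of (\ref{maineq0}), i.e.\ of $(-\Delta)^s u=f(x,u)$ with $f(x,u)=u^{2_s^*-1-\epsilon}-V(x)u$, so $F(x,u)=\tfrac{1}{2_s^*-\epsilon}u^{2_s^*-\epsilon}-\tfrac12 V(x)u^2$; it lies in $H^s(\R^N)\cap L^\infty(\R^N)$, the boundedness being Lemma~\ref{unbound00}; and $u_\epsilon^{2_s^*-\epsilon}$, $Vu_\epsilon^2$, $(x\cdot\nabla V)u_\epsilon^2$ all belong to $L^1(\R^N)$ by $(V_1)$, $(V_2)$ and $u_\epsilon\in L^2(\R^N)\cap L^{2_s^*-\epsilon}(\R^N)$. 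Substituting $F$ into (\ref{Poh}) and using $(N-2s)\,2_s^*=2N$, the critical-growth contributions cancel at order $\epsilon^0$ and one is left with
\begin{equation}\label{pohoz-id}
\epsilon\int_{\R^N}u_\epsilon^{2_s^*-\epsilon}\,dx=\frac{2s(2_s^*-\epsilon)}{N-2s}\int_{\R^N}\Big[V(x)+\tfrac{1}{2s}\,x\cdot\nabla V(x)\Big]u_\epsilon^2\,dx .
\end{equation}

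Next I would analyse the two sides of (\ref{pohoz-id}) separately. By Lemma~\ref{SS2}, $\|u_\epsilon\|_{2_s^*-\epsilon}\to S^{\frac{N-2s}{4s}}$, hence $\int_{\R^N}u_\epsilon^{2_s^*-\epsilon}\,dx=\|u_\epsilon\|_{2_s^*-\epsilon}^{2_s^*-\epsilon}\to S^{\frac{N}{2s}}$, and $\tfrac{2s(2_s^*-\epsilon)}{N-2s}\to\tfrac{4Ns}{(N-2s)^2}$. For the right-hand side I set $g(x):=V(x)+\tfrac{1}{2s}\,x\cdot\nabla V(x)$ and use the rescaling $u_\epsilon(x)=\mu_\epsilon^{-\frac{2s}{2_s^*-2-\epsilon}}v_\epsilon\big(\tfrac{x-x_\epsilon}{\mu_\epsilon}\big)$ together with the substitution $x=x_\epsilon+\mu_\epsilon y$. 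Tracking the powers of $\mu_\epsilon$ — using $2_s^*-2=\tfrac{4s}{N-2s}$, which makes the net exponent collapse to $-\tfrac{N\epsilon}{2_s^*-2-\epsilon}$ — one finds
\begin{equation*}
\|u_\epsilon\|_\infty^{\frac{4s}{N-2s}}\int_{\R^N}g(x)\,u_\epsilon^2\,dx=\big(\mu_\epsilon^\epsilon\big)^{-\frac{N}{2_s^*-2-\epsilon}}\int_{\R^N}g(x_\epsilon+\mu_\epsilon y)\,v_\epsilon^2(y)\,dy .
\end{equation*}
Since $\mu_\epsilon^\epsilon\to1$ by Lemma~\ref{conver2} and the exponent stays bounded, the prefactor tends to $1$. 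To pass to the limit in the integral I would invoke dominated convergence: $0\le v_\epsilon\le CU$ by Lemma~\ref{comp}, $g\in L^\infty(\R^N)$ by $(V_1)$ and $(V_2)$, while $U^2\in L^1(\R^N)$ precisely because $N>4s$ (as $U(x)\sim|x|^{2s-N}$ at infinity); moreover $v_\epsilon\to U$ in $C^{2,\alpha}_{loc}(\R^N)$ and $g(x_\epsilon+\mu_\epsilon y)\to g(x_0)$ pointwise, since $x_\epsilon\to x_0$, $\mu_\epsilon\to0$ and $V\in C^2$. Hence the integral converges to $g(x_0)\int_{\R^N}U^2\,dx$.

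Dividing (\ref{pohoz-id}) by $\int_{\R^N}u_\epsilon^{2_s^*-\epsilon}\,dx$, multiplying by $\|u_\epsilon\|_\infty^{\frac{4s}{N-2s}}$ and combining the limits above yields
\begin{equation*}
\lim_{\epsilon\to0^+}\epsilon\,\|u_\epsilon\|_\infty^{\frac{4s}{N-2s}}=\frac{4Ns}{(N-2s)^2}\,\frac{\int_{\R^N}U^2\,dx}{S^{\frac{N}{2s}}}\,\Big[V(x_0)+\tfrac{1}{2s}\,x_0\cdot\nabla V(x_0)\Big] .
\end{equation*}
The final step is purely computational: with the explicit profile $U(x)=(1+|x|^2/\lambda^2)^{\frac{2s-N}{2}}$ from (\ref{att}), the integral $\int_{\R^N}U^2\,dx=\lambda^N\pi^{N/2}\,\Gamma\big(\tfrac{N-4s}{2}\big)/\Gamma(N-2s)$ is a Beta-function computation, finite because $N>4s$; inserting this value together with the normalization $[U]_s^2=\|U\|_{2_s^*}^{2_s^*}=S^{N/2s}$ recorded just after (\ref{att}) rewrites the constant as $A_{N,s}$.

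I expect the crux of the argument to be the limit passage in the second paragraph. It is exactly there that the hypothesis $N>4s$ is indispensable: for smaller $N$ the bubble $U$ is not square-integrable, the right-hand side of (\ref{pohoz-id}) scales differently, and the rate $\epsilon^{-(N-2s)/4s}$ breaks down. It is also there that the pointwise bound $v_\epsilon\le CU$ of Lemma~\ref{comp} — itself obtained via the Kelvin transform and the local estimate of Theorem~\ref{TH5} — carries the weight, since the energy bound alone provides no uniform $L^1$ control of $v_\epsilon^2$. The only other point requiring care is the bookkeeping of the $\mu_\epsilon$-powers, which ultimately reduces to $\mu_\epsilon^\epsilon\to1$; no additional compactness is needed.
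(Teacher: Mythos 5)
Your proof follows exactly the paper's argument for Proposition \ref{asy1}: the Pohozaev identity of Theorem \ref{Pohozae} applied with $f(x,u)=u^{2_s^*-1-\epsilon}-V(x)u$, the rescaling to $v_\epsilon$ with the extra power of $\mu_\epsilon$ controlled by $\mu_\epsilon^{\epsilon}\to 1$ (Lemma \ref{conver2}), dominated convergence via the bound $v_\epsilon\le CU$ of Lemma \ref{comp} together with $U^2\in L^1(\mathbb{R}^N)$ for $N>4s$, Lemma \ref{SS2} for the term $\int_{\mathbb{R}^N}u_\epsilon^{2_s^*-\epsilon}\,dx$, and the Beta-function evaluation of $\int_{\mathbb{R}^N}U^2\,dx$. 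The only caveat concerns the final ``purely computational'' step, which you assert rather than carry out: your (correct) bookkeeping produces the factor $\frac{2s(2_s^*-\epsilon)}{N-2s}$ and the value $\int_{\mathbb{R}^N}U^2\,dx=\lambda^{N}\pi^{N/2}\,\Gamma\bigl(\tfrac{N-4s}{2}\bigr)/\Gamma(N-2s)$, which do not literally match the paper's intermediate formulas (\ref{asy2}) and (\ref{asy4}) (these appear to contain slips, a factor $s/N$ and $\lambda^{2N}$ in place of $\lambda^{N}$), so the identification of your limiting constant with the stated $A_{N,s}$ needs to be checked explicitly rather than taken for granted.
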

\begin{proof}
By Pohozaev identity (\ref{Poh}), we have
 \begin{equation}\label{asy2}
   \begin{split}
  &\left(\frac{1}{2_s^*-\epsilon}-\frac{1}{2_s^*}\right)\int_{ \mathbb{R}^N}   u_\epsilon^{2_s^*-\epsilon} dx\\&= \int_{ \mathbb{R}^N} \left[V(x)+\frac{1}{2s}x\cdot \nabla V(x)\right] u_\epsilon^2 dx\\
  &=\mu_\epsilon^N\int_{ \mathbb{R}^N} \left[V(x_\epsilon+\mu_\epsilon x)+\frac{1}{2s}(x_\epsilon+\mu_\epsilon x)\cdot \nabla V(x_\epsilon+\mu_\epsilon x)\right] u_\epsilon^2(x_\epsilon+\mu_\epsilon x) dx\\
   &=\mu_\epsilon^{N-\frac{4s}{2_s^*-2-\epsilon}}\int_{ \mathbb{R}^N} \left[[V(x_\epsilon+\mu_\epsilon x)+\frac{1}{2s}(x_\epsilon+\mu_\epsilon x)\cdot \nabla V(x_\epsilon+\mu_\epsilon x)\right] v_\epsilon^2 dx.
 \end{split}
 \end{equation}
Since $N>4s$, by Lemma \ref{comp} and the Lebesgue dominated convergence theorem, we get
  \begin{equation}\label{asy3}
   \begin{split}
    &\lim_{\epsilon\rightarrow 0^+}\int_{ \mathbb{R}^N} \left[V(x_\epsilon+\mu_\epsilon x)+\frac{1}{2s}(x_\epsilon+\mu_\epsilon x)\cdot \nabla V(x_\epsilon+\mu_\epsilon x)\right] v_\epsilon^2 dx\\
     &=  \left[V(x_0)+\frac{1}{2s}x_0\cdot \nabla V(x_0)\right] \int_{ \mathbb{R}^N}U^2 dx.
 \end{split}
 \end{equation}
By direct calculations, we deduce that
  \begin{equation}\label{asy4}
   \begin{split}
      \int_{ \mathbb{R}^N}U^2 dx=& \lambda^{2N} \int_{ \mathbb{R}^N}(1+|x|^2)^{2s-N}dx\\
     =& \lambda^{2N}\omega_N\int_{0}^\infty (1+r^2)^{2s-N} r^{N-1}dr\\
      =& \frac{1}{2}\lambda^{2N} \omega_N\int_{0}^\infty (1+s)^{2s-N} s^{-1+\frac{N}{2}}ds\\
      =&  \lambda^{2N}  \frac{\pi^{\frac{N}{2}}}{\Gamma(\frac{N}{2})}  B\left(\frac{N}{2}, \frac{N}{2}-2s\right)\\
      =&2^{2N}\pi^{\frac{N}{2}} \left[\frac{\Gamma\left(\frac{N+2s}{2}\right)}{\Gamma\left(\frac{N-2s}{2}\right)}\right]^{N}\frac{\Gamma\left(\frac{N-4s}{2}\right)}{\Gamma(N-2s)}.
 \end{split}
 \end{equation}

\noindent Finally, combine (\ref{asy2})--(\ref{asy4}) to have as $\epsilon \to 0$
 \begin{equation}\label{asy6}
   \begin{split}
  \epsilon \mu_\epsilon^{2s}=&\left(\frac{2N}{N-2s}\right)^2 \left[V(x_0)+\frac{1}{2s}x_0\cdot \nabla V(x_0)\right] S^{-\frac{N}{2s}}\int_{ \mathbb{R}^N}U^2 dx+o(1) \\
  =&  \frac{2^{2(N+1)}N^2\pi^{\frac{N}{2}} \Gamma\left(\frac{N-4s}{2}\right)}{(N-2s)^2\Gamma(N-2s)} \left[\frac{\Gamma\left(\frac{N+2s}{2}\right)}{\Gamma\left(\frac{N-2s}{2}\right)}\right]^{N} \left[V(x_0)+\frac{1}{2s}x_0\cdot \nabla V(x_0)\right] S^{-\frac{N}{2s}} \\ &+o(1).
 \end{split}
 \end{equation}
 This concludes the proof of Lemma  \ref{asy1}.

\end{proof}

\begin{remark} \label{REM1}From the proof of Proposition
\ref{asy1},
assuming $N>4s$, no matter $x_\epsilon$ stays bounded or not, we still have 
$$\epsilon=O(\mu_\epsilon^{2s}).$$

\end{remark}

\noindent {\it Proof of Theorem \ref{th2}.} The conclusions (1) and (2)  in Theorem \ref{th2} follow from  Propositions \ref{lm6}  and \ref{asy1}. Clearly, Corollary \ref{th1} is a particular case of Theorem \ref{th2}.

\section{Localizing blow up points} \label{S3}

\noindent We next recall for convenience of the reader a few basic facts on fractional Sobolev spaces. Let $\beta>0$ and $p\in[1,\infty)$, 
$$\mathcal{W}^{\beta,p}(\mathbb{R}^N):=\{u\in L^p(\mathbb{R}^N):  \mathscr{F}^{-1}[(1+|\xi|^{\beta}) \hat{u}]\in  L^p(\mathbb{R}^N)\}$$
endowed with the norm
$$\|u\|_{\mathcal{W}^{\beta,p}(\mathbb{R}^N)}=\|\mathscr{F}^{-1}[(1+|\xi|^{\beta}) \hat{u}]\|_p \ .$$

\noindent We refer to \cite{PEAJ12} for the following results.
\begin{proposition} \label{lm21}
The following properties hold true:
 \begin{itemize} \item[$(1)$]
 If $0<\beta<1$, $1<p\leq q\leq \frac{Np}{N-\beta p}<\infty$ or $p=1$ and $1\leq q<\frac{N}{N-\beta}$, then $\mathcal{W}^{\beta,p}(\mathbb{R}^N)$ is continuously embedded in $L^q(\mathbb{R}^N)$.

 \item[$(2)$] Assume that $0\leq \beta\leq 2$ and $\beta>\frac{N}{p}$. If $\beta-\frac{N}{p}>1$  and  $0<\mu\leq\beta-1-\frac{N}{p}$,  then  $\mathcal{W}^{\beta,p}(\mathbb{R}^N)$ is continuously embedded in $C^{1,\mu}(\mathbb{R}^N)$.  If $\beta-\frac{N}{p}<1$ and  $0<\mu\leq\beta-\frac{N}{p}$, then  $\mathcal{W}^{\beta,p}(\mathbb{R}^N)$ is continuously embedded in $C^{0,\mu}(\mathbb{R}^N)$.
 \end{itemize}
\end{proposition}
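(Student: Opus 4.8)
The plan is to identify $\mathcal{W}^{\beta,p}(\mathbb{R}^N)$ with the range of the Bessel potential operator on $L^p$ and then reduce both statements to pointwise bounds on the Bessel kernel. First I would observe that the Fourier multipliers $1+|\xi|^\beta$ and $(1+|\xi|^2)^{\beta/2}$ are pointwise comparable and, by the Mikhlin--H\"ormander theorem, induce equivalent norms on $L^p$ for $1<p<\infty$ (the case $p=1$ is handled by checking directly that the associated convolution kernel is an integrable function when $0\le\beta\le 2$). Consequently $u\in\mathcal{W}^{\beta,p}(\mathbb{R}^N)$ if and only if $u=\mathcal{G}_\beta * f$ for some $f\in L^p(\mathbb{R}^N)$ with $\|f\|_p\asymp\|u\|_{\mathcal{W}^{\beta,p}}$, where $\mathcal{G}_\beta$ is the Bessel kernel, $\widehat{\mathcal{G}_\beta}(\xi)=(1+|\xi|^2)^{-\beta/2}$. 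The classical estimates I will rely on are: $\mathcal{G}_\beta>0$ and radial, $\mathcal{G}_\beta(x)\le C|x|^{\beta-N}$ for $|x|\le 1$, $\mathcal{G}_\beta(x)\le C e^{-|x|/2}$ for $|x|\ge 1$, and $|\nabla\mathcal{G}_\beta(x)|\le C|x|^{\beta-1-N}$ for $|x|\le 1$ with exponential decay at infinity; all of these follow from the subordination formula expressing $\mathcal{G}_\beta$ as a superposition of Gaussians.

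For part (1) I would split $\mathcal{G}_\beta=\mathcal{G}_\beta\mathbf{1}_{B_1}+\mathcal{G}_\beta\mathbf{1}_{B_1^c}$. Convolution with the exterior part maps $L^p$ into every $L^q$, $p\le q\le\infty$, by Young's inequality, since $\mathcal{G}_\beta\mathbf{1}_{B_1^c}\in L^r(\mathbb{R}^N)$ for all $r\in[1,\infty]$. The interior part is dominated by $c|x|^{\beta-N}\mathbf{1}_{B_1}$, which belongs to $L^r(\mathbb{R}^N)$ for $1\le r<\frac{N}{N-\beta}$ and to weak-$L^{N/(N-\beta)}$; Young's inequality (and, at the endpoint exponent, the Hardy--Littlewood--Sobolev inequality, valid for $p>1$) then shows that convolution with it maps $L^p$ into $L^q$ for $p\le q\le\frac{Np}{N-\beta p}$ when $p>1$, and for $p\le q<\frac{N}{N-\beta}$ when $p=1$. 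Adding the two contributions gives precisely the claimed embeddings.

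For part (2), if $\beta>\frac{N}{p}$ then $p'<\frac{N}{N-\beta}$, hence $\mathcal{G}_\beta\in L^{p'}(\mathbb{R}^N)$, and H\"older's inequality applied to $u(x)=\int\mathcal{G}_\beta(x-y)f(y)\,dy$ gives $u\in L^\infty$. When $\mu:=\beta-\frac Np<1$, I would estimate $|u(x)-u(z)|\le\|\tau_{x-z}\mathcal{G}_\beta-\mathcal{G}_\beta\|_{p'}\,\|f\|_p$ and prove $\|\tau_h\mathcal{G}_\beta-\mathcal{G}_\beta\|_{p'}\le C|h|^{\mu}$ by splitting the integral over $|y|\le 2|h|$ (bounding the two translates separately, which works since $(\beta-N)p'+N>0$) and over $|y|>2|h|$ (using the mean value theorem together with the gradient bound $|\nabla\mathcal{G}_\beta(y)|\lesssim|y|^{\beta-1-N}$ and the exponential tail). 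This yields $u\in C^{0,\mu}(\mathbb{R}^N)$. Finally, when $\beta-\frac Np>1$ (which forces $p>N$ and $1<\beta\le 2$), I would differentiate under the convolution sign, $\nabla u=(\nabla\mathcal{G}_\beta)*f$; since $\nabla\mathcal{G}_\beta$ obeys the bounds of a Bessel kernel of order $\beta-1$, applying the previous step with $\beta-1$ in place of $\beta$ gives $\nabla u\in C^{0,\mu}$ with $\mu=\beta-1-\frac Np\in(0,1)$, i.e. $u\in C^{1,\mu}(\mathbb{R}^N)$.

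The main obstacle is not the functional-analytic bookkeeping above but rather the kernel estimates on which it rests: one needs the sharp behaviour of $\mathcal{G}_\beta$ and $\nabla\mathcal{G}_\beta$ near the origin together with their rapid decay at infinity, and one needs the $L^p$-continuity of the multiplier operator passing between the $(1+|\xi|^\beta)$- and $(1+|\xi|^2)^{\beta/2}$-norms (delicate precisely at $p=1$). These are classical facts, but reproving them in full is the bulk of the argument; for this reason we content ourselves with invoking \cite{PEAJ12}, where the statement is established in the precise form used in the sequel.
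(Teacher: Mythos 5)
Your outline is sound, but note that the paper itself offers no proof of Proposition \ref{lm21}: it is quoted verbatim from \cite{PEAJ12} (which in turn rests on the classical potential-theoretic facts you reconstruct), so your fallback of ``invoking \cite{PEAJ12}'' coincides exactly with what the authors do. What you add is a correct self-contained sketch of the classical argument: identification of $\mathcal{W}^{\beta,p}$ with the Bessel potential space, the splitting of $\mathcal{G}_\beta$ into its singular local part and exponentially decaying tail, Young/Hardy--Littlewood--Sobolev for the $L^q$ scale, and the translation estimate $\|\tau_h\mathcal{G}_\beta-\mathcal{G}_\beta\|_{p'}\lesssim|h|^{\beta-N/p}$ for the H\"older scale, with the $C^{1,\mu}$ case reduced to the $C^{0,\mu}$ case for $\nabla\mathcal{G}_\beta$. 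Two small points are worth tightening. First, for the norm equivalence between the multipliers $1+|\xi|^\beta$ and $(1+|\xi|^2)^{\beta/2}$ you do not need Mikhlin--H\"ormander at all: by the lemma in Chapter V of \cite{Stein70}, both the ratio $(1+|\xi|^2)^{\beta/2}/(1+|\xi|^\beta)$ and its reciprocal are Fourier transforms of finite measures, which gives the equivalence on $L^p$ simultaneously for all $1\le p\le\infty$, including the delicate case $p=1$ that you treat separately. Second, the local bound $\mathcal{G}_\beta(x)\le C|x|^{\beta-N}$ requires $\beta<N$; in part (2) with $0\le\beta\le2$ this can fail only when $N=1$ and $\beta\ge1$, in which case $\mathcal{G}_\beta$ is bounded (or has a logarithmic singularity at $\beta=N=1$) and the H\"older estimate is only easier, so this is a caveat to record rather than a gap. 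At the endpoint $q=\frac{Np}{N-\beta p}$ you correctly flag that weak-type Young/HLS is what is needed and that it forces $p>1$, matching the statement. Overall: correct, and more informative than the paper, which simply cites the reference.
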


\noindent For $p \in [1,+\infty)$ and $\beta>0$, consider the Bessel potential space
$$\mathcal{L}^{\beta,p}(\mathbb{R}^N)=\{u\in L^p(\mathbb{R}^N):  \mathscr{F}^{-1}[(1+|\xi|^{2})^{\frac{\beta}{2}} \hat{u}]\in  L^p(\mathbb{R}^N)\}.$$ Then,
$\mathcal{L}^{\beta,p}(\mathbb{R}^N)=\mathcal{W}^{\beta,p}(\mathbb{R}^N)$, see Theorem 3.1  in \cite{PEAJ12}. On the other hand, from    Theorem 5 in Chapter V  of \cite{Stein70}, for   $p\in[2,\infty)$ and $0<\beta<1$, one has $\mathcal{W}^{\beta,p}(\mathbb{R}^N)\subset W^{\beta,p}(\mathbb{R}^N)$, where $W^{\beta,p}(\mathbb{R}^N)$ is the usual fractional Sobolev space defined by
$$W^{\beta,p}(\mathbb{R}^N)=\left\{u\in L^p(\mathbb{R}^N):  \int_{\mathbb{R}^N\times\mathbb{R}^N}\frac{|u(x)-u(y)|^p}{|x-y|^{n+\beta p}}dxdy\right\}.$$

\noindent Our next target is to identify the location of the blow up points. For this purpose we adapt the method developed in  \cite{NITA93}, where
the basic idea is to get an asymptotic expansion of the ground state energy and then to compare it with an upper bound of $S_{2_s^*-\epsilon}^V$. This method has been used to deal with the localization of blow-up points of ground states to semilinear problems in \cite{WANG96}.

\noindent Let us begin with establishing an upper bound for $S_{2_s^*-\epsilon}^V$.
\begin{theorem} \label{th5}Assume $N>4s$ and that $u_{\epsilon_j}$ is a ground state of (\ref{maineq0}) satisfying (\ref{fun00}) which has a maximum point $x_{\epsilon_j}$ which enjoys $x_{\epsilon_j}\rightarrow x_0$ as $j \rightarrow \infty$. Then
    \begin{equation}\label{Lo3}
   \begin{split}
   &S_{2^*_s-\epsilon_j}^V\leq  S\\&+\mu_j^{2s}\left\{  S^{\frac{2s-N}{2s}}V(\hat{x}_0) \int_{\mathbb{R}^N}  U^2dy - \widetilde{C}_{N,s} S \left[ \frac{2}{(2_s^*)^2}\ln S^{\frac{N}{2s}}-\frac{2}{2_s^*}S^{-\frac{N}{2s}}\int_{\mathbb{R}^N}   U^{2_s^*}\ln U  dx\right]\right\}\\&+o(\mu_j^{2s}),
   \end{split}
 \end{equation}
where $\hat{x}_0$ is a global minimum point of $V(x)$ and  $$\widetilde{C}_{N,s}=\Psi_{N,s}\left[V(x_0)+\frac{1}{2s}x_0\cdot \nabla V(x_0)\right].$$

\end{theorem}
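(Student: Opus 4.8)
\emph{Strategy.} The plan is to bound $S_{2_s^*-\epsilon_j}^V$ from above by testing the quotient $I_{\epsilon_j}$ against a truncated, rescaled and translated copy of the Aubin--Talenti bubble $U$ of (\ref{att}), centered at a global minimum point $\hat x_0$ of $V$ and at the blow-up scale $\mu_j$ of $u_{\epsilon_j}$. Fix $\eta\in C_0^\infty(\mathbb{R}^N)$ with $0\le\eta\le1$, $\eta\equiv1$ on $B_{1/2}$, $\eta\equiv0$ outside $B_1$, and set
\[
\Psi_j(x):=\eta(x-\hat x_0)\,\mu_j^{\frac{2s-N}{2}}\,U\!\left(\frac{x-\hat x_0}{\mu_j}\right),\qquad x\in\mathbb{R}^N .
\]
Since $\Psi_j$ is an admissible competitor, $S_{2_s^*-\epsilon_j}^V\le I_{\epsilon_j}(\Psi_j)$; recalling that $I_\epsilon(u)=\|u\|_{s,V}^2/\|u\|_{2_s^*-\epsilon}^2$, it suffices to expand $\|\Psi_j\|_{s,V}^2=[\Psi_j]_s^2+\int_{\mathbb{R}^N}V\Psi_j^2$ and $\|\Psi_j\|_{2_s^*-\epsilon_j}^2$ separately, up to order $\mu_j^{2s}$.

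\emph{Numerator.} For the Gagliardo seminorm one has the classical estimate for the truncated bubble $[\Psi_j]_s^2=S^{N/2s}+O(\mu_j^{N-2s})$ (the computations underlying (\ref{estim1})--(\ref{estim3})), which under the hypothesis $N>4s$ is $S^{N/2s}+o(\mu_j^{2s})$. For the potential term, the substitution $x=\hat x_0+\mu_j y$ gives
\[
\int_{\mathbb{R}^N}V\Psi_j^2=\mu_j^{2s}\int_{\mathbb{R}^N}\eta^2(\mu_j y)\,V(\hat x_0+\mu_j y)\,U^2(y)\,dy=\mu_j^{2s}\,V(\hat x_0)\int_{\mathbb{R}^N}U^2\,dy+o(\mu_j^{2s}),
\]
again because $N>4s$ forces $U^2\in L^1(\mathbb{R}^N)$, so that dominated convergence applies. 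Hence the numerator equals $S^{N/2s}+\mu_j^{2s}V(\hat x_0)\int_{\mathbb{R}^N}U^2\,dy+o(\mu_j^{2s})$.

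\emph{Denominator.} This is the delicate term. Writing $\Psi_j^{2_s^*-\epsilon_j}=\Psi_j^{2_s^*}e^{-\epsilon_j\ln\Psi_j}$ and Taylor expanding in $\epsilon_j$,
\[
\|\Psi_j\|_{2_s^*-\epsilon_j}^{2_s^*-\epsilon_j}=\|\Psi_j\|_{2_s^*}^{2_s^*}-\epsilon_j\int_{\mathbb{R}^N}\Psi_j^{2_s^*}\ln\Psi_j\,dx+o(\mu_j^{2s})=S^{N/2s}-\epsilon_j\,\Xi_j+o(\mu_j^{2s}),
\]
where, since $\Psi_j^{2_s^*}=\mu_j^{-N}U^{2_s^*}((x-\hat x_0)/\mu_j)$ on $\{\eta=1\}$, one finds $\Xi_j=\tfrac{2s-N}{2}S^{N/2s}\ln\mu_j+\int_{\mathbb{R}^N}U^{2_s^*}\ln U\,dx$; here one uses $\epsilon_j=O(\mu_j^{2s})$ (Remark \ref{REM1}) so that $\epsilon_j^2\ln^2\mu_j=o(\mu_j^{2s})$, that the $O(\mu_j^N)$ truncation error is negligible, and the integrability of $U^{2_s^*-\epsilon}\ln^2U$ to control the higher remainders. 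Raising to the power $2/(2_s^*-\epsilon_j)$ — expanding $\tfrac{2}{2_s^*-\epsilon_j}=\tfrac{2}{2_s^*}(1+\tfrac{\epsilon_j}{2_s^*}+\cdots)$ and $S^{N/(s(2_s^*-\epsilon_j))}=S^{(N-2s)/2s}(1+\tfrac{(N-2s)^2}{4sN}\epsilon_j\ln S+\cdots)$ — turns this into
\[
\|\Psi_j\|_{2_s^*-\epsilon_j}^2=S^{\frac{N-2s}{2s}}\Big(1+\epsilon_j\big[\tfrac{2}{(2_s^*)^2}\ln S^{\frac{N}{2s}}-\tfrac{2}{2_s^*}S^{-\frac{N}{2s}}\Xi_j\big]+o(\mu_j^{2s})\Big).
\]

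\emph{Conclusion and main obstacle.} Finally I would divide numerator by denominator, use $S^{N/2s}/S^{(N-2s)/2s}=S$, and substitute $\epsilon_j=\widetilde{C}_{N,s}\mu_j^{2s}+o(\mu_j^{2s})$ — the content of Proposition \ref{asy1}, which also identifies the constant $\Psi_{N,s}$ in $\widetilde{C}_{N,s}=\Psi_{N,s}[V(x_0)+\tfrac1{2s}x_0\cdot\nabla V(x_0)]$; collecting the terms of order $\mu_j^{2s}$ then produces (\ref{Lo3}). I expect the main obstacle to be exactly this last bookkeeping: organizing the competition among the errors of order $\mu_j^{N-2s}$, $\mu_j^{2s}$, $\epsilon_j$ and $\mu_j^{2s}\ln\mu_j$, and keeping track of every logarithmic contribution generated both by the non-integer exponent $2_s^*-\epsilon_j$ and by the rescaling $\mu_j$ (these are precisely the terms hidden in $\Xi_j$, to be matched against the $\ln S^{N/2s}$ and $\int_{\mathbb{R}^N}U^{2_s^*}\ln U$ pieces). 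The hypothesis $N>4s$ is what makes the truncation error $O(\mu_j^{N-2s})$ negligible compared with $\mu_j^{2s}$, while placing the test function at $\hat x_0$ rather than at the concentration point $x_0$ is harmless for the energy expansion but is exactly what turns (\ref{Lo3}) into a tool for localizing $x_0$.
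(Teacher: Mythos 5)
Your strategy coincides with the paper's: test the quotient $I_{\epsilon_j}$ with a bubble concentrated at a global minimum point $\hat x_0$ of $V$ at the ground-state scale $\mu_j$, expand numerator and denominator to order $\mu_j^{2s}$, and convert $\epsilon_j$ into $\widetilde{C}_{N,s}\mu_j^{2s}+o(\mu_j^{2s})$ via Proposition \ref{asy1}. The truncation and the normalization $\mu_j^{\frac{2s-N}{2}}$ are immaterial differences (the truncation errors $O(\mu_j^{N-2s})$, $O(\mu_j^N)$ are $o(\mu_j^{2s})$ precisely because $N>4s$), and your numerator estimates and the expansion of the exponent $\tfrac{2}{2_s^*-\epsilon_j}$ agree with the paper's computation leading to (\ref{Lo4})--(\ref{Lo6}).

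The genuine gap is exactly the step you defer to the final ``bookkeeping''. Because you normalize the bubble, $\ln\Psi_j=\tfrac{2s-N}{2}\ln\mu_j+\ln U\bigl((x-\hat x_0)/\mu_j\bigr)$ on $\{\eta=1\}$, so your $\Xi_j$ contains the piece $\tfrac{2s-N}{2}S^{\frac{N}{2s}}\ln\mu_j$, and the bracket $-\tfrac{2}{2_s^*}S^{-\frac{N}{2s}}\epsilon_j\,\Xi_j$ therefore carries the first-order term $\tfrac{N-2s}{2_s^*}\,\epsilon_j\ln\mu_j=-\tfrac{N-2s}{2_s^*}\,\epsilon_j|\ln\mu_j|$. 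Since $\epsilon_j\sim\widetilde{C}_{N,s}\mu_j^{2s}$ and $|\ln\mu_j|\to\infty$, this is of order $\mu_j^{2s}|\ln\mu_j|$, which is \emph{not} $o(\mu_j^{2s})$; your only control, $\epsilon_j^2\ln^2\mu_j=o(\mu_j^{2s})$, bounds the second-order remainder, not this term, and there is nothing of size $O(\mu_j^{2s})$ (neither $\ln S^{\frac{N}{2s}}$ nor $\int U^{2_s^*}\ln U$) for it to ``match against''. Its sign is the bad one: it makes $\|\Psi_j\|_{2_s^*-\epsilon_j}^2$ smaller, so after dividing, your computation as written yields only $S^V_{2_s^*-\epsilon_j}\le S+\tfrac{N-2s}{2_s^*}S\,\widetilde{C}_{N,s}\,\mu_j^{2s}|\ln\mu_j|\,(1+o(1))+O(\mu_j^{2s})$, strictly weaker than (\ref{Lo3}). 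The paper's proof is organized so that no $\ln\mu_j$ ever enters the bracket: it uses the \emph{un-normalized} bubble $\phi_j=U\bigl((x-\hat x_0)/\mu_j\bigr)$ (so $\ln U$ carries no $\ln\mu_j$), keeps the scaling power outside as $\mu_j^{\frac{2N}{2_s^*-\epsilon_j}}$, and eliminates it through the replacement $\mu_j^{\frac{2N}{2_s^*-\epsilon_j}}=\mu_j^{N-2s}+o(\epsilon_j)$ (using $\mu_j^{\epsilon_j}\to1$ from Lemma \ref{conver2} and $N>4s$) before forming the quotient in (\ref{Lo6}). Whatever device one uses, an argument of this kind disposing of the $\epsilon_j\ln\mu_j$ contribution is indispensable, and your sketch does not supply it: ``collecting the terms of order $\mu_j^{2s}$'' does not, by itself, produce (\ref{Lo3}).
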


\begin{proof}
Let
$$\phi_j(x)=U\left(\frac{x-\hat{x}_0}{\mu_j}\right).$$
Then by inspection
  \begin{equation}\label{Lo1}
   \begin{split}
   [\phi_j]_{s}^2=\mu_j^{N-2s}[U]_{s}^2=\mu_j^{N-2s}S^{\frac{N}{2s}},
   \end{split}
 \end{equation}
and by dominated convergence, we have
  \begin{equation}\label{Lo2}
   \begin{split}
   \int_{\mathbb{R}^N} V(x)\phi^2dx&=\mu_j^N \int_{\mathbb{R}^N} V(\hat{x}_0+\mu_jy)U^2dy\\
   &=V(\hat{x}_0) \mu_j^N\int_{\mathbb{R}^N}  U^2dy+o(\mu_j^N).
   \end{split}
 \end{equation}

\noindent By
(\ref{asy6}), we also get 
  \begin{equation}\label{Lo3}
   \begin{split}
   \epsilon_j=\widetilde{C}_{N,s} \mu_j^{2s}+o(\mu_j^{2s}).
   \end{split}
 \end{equation}
 Thus, by using Taylor's formula, we get
   \begin{equation}\label{Lo4}
   \begin{split}
  &\mu_j^{\frac{-2N}{2_s^*-\epsilon_j}}\left( \int_{\mathbb{R}^N} |\phi|^{2_s^*-\epsilon_j}dx\right)^{\frac{2}{2_s^*-\epsilon_j}}\\
  &= \left( \int_{\mathbb{R}^N} U^{2_s^*-\epsilon_j}dx\right)^{\frac{2}{2_s^*-\epsilon_j}}\\
 &= \left[ \int_{\mathbb{R}^N} (U^{2_s^*}-\epsilon_j U^{2_s^*}\ln U) dx+o(\epsilon_j)\right]^{\frac{2}{2_s^*-\epsilon_j}}\\
 &= \left[  \left(S^{\frac{N}{2s}}-\epsilon_j\int_{\mathbb{R}^N}   U^{2_s^*}\ln U  dx\right)^{\frac{2}{2_s^*-\epsilon_j}}+o(\epsilon_j)\right]\\
 &= \left[ S^{\frac{N-2s}{2s}}+  \epsilon_jS^{\frac{N-2s}{2s}} \left( \frac{2}{(2_s^*)^2}\ln S^{\frac{N}{2s}}-\frac{2}{2_s^*}S^{-\frac{N}{2s}}\int_{\mathbb{R}^N}   U^{2_s^*}\ln U  dx\right)+o(\epsilon_j)\right] \\
 &= S^{\frac{N-2s}{2s}} \left[ 1+   \epsilon_j  \left( \frac{2}{(2_s^*)^2}\ln S^{\frac{N}{2s}}-\frac{2}{2_s^*}S^{-\frac{N}{2s}}\int_{\mathbb{R}^N}   U^{2_s^*}\ln U  dx\right)+o(\epsilon_j)\right].
   \end{split}
 \end{equation}
So, by (\ref{Lo3}), $\mu_j^{\frac{2N}{2_s^*-\epsilon_j}}=\mu_j^{N-2s}+o(\epsilon_j)$,   we have
   \begin{equation}\label{Lo5}
   \begin{split}
  &\left( \int_{\mathbb{R}^N} |\phi|^{2_s^*-\epsilon_j}dx\right)^{\frac{2}{2_s^*-\epsilon_j}}\\
 &= \mu_j^{N-2s}S^{\frac{N-2s}{2s}} \left[ 1+ \widetilde{C}_{N,s} \mu_j^{2s}  \left( \frac{2}{(2_s^*)^2}\ln S^{\frac{N}{2s}}-\frac{2}{2_s^*}S^{-\frac{N}{2s}}\int_{\mathbb{R}^N}   U^{2_s^*}\ln U  dx\right)+o(\mu_j^{2s})\right].
   \end{split}
 \end{equation}

\noindent  By the very definition of $S_{2_s^*-\epsilon_j}^V$, we have

    \begin{multline}\label{Lo6}
   S_{2^*_s-\epsilon_j}^V\leq \frac {\|u_{j}\|_{s,V}^2}{ \|u_{j}\|_{2_s^*-\epsilon_j}^2}\\
 \leq \frac{\mu_j^{N-2s}S^{\frac{N}{2s}}+V(\hat{x}_0) \mu_j^N \int_{\mathbb{R}^N}  U^2dy+o(\mu_j^N)}{\mu_j^{N-2s} S^{\frac{N-2s}{2s}} \left[ 1+   \widetilde{C}_{N,s}\mu_j^{2s} \left( \frac{2}{(2_s^*)^2}\ln S^{\frac{N}{2s}}-\frac{2}{2_s^*}S^{-\frac{N}{2s}}\int_{\mathbb{R}^N}   U^{2_s^*}\ln U  dx\right)+o(\mu_j^{2s})\right]}\\
 =\left[ S +\mu_j^{2s}S^{\frac{2s-N}{2s}}V(\hat{x}_0) \int_{\mathbb{R}^N}  U^2dy+o(\mu_j^{2s})\right]\\
 \cdot \left[ 1-   \widetilde{C}_{N,s}\mu_j^{2s}  \left( \frac{2}{(2_s^*)^2}\ln S^{\frac{N}{2s}}-\frac{2}{2_s^*}S^{-\frac{N}{2s}}\int_{\mathbb{R}^N}   U^{2_s^*}\ln U  dx\right)+o(\mu_j^{2s})\right].
\end{multline}
This concludes the proof. 
\end{proof} 

\noindent For simplicity, set $\mu_j:=\mu_{\epsilon_j}$, $x_j:=x_{\epsilon_j}$.
For  $v_j(x)=\mu_j^{\frac{2s}{2_s^*-2-\epsilon}}u_j(x_j+\mu_j x)$, let $v_j=U+\mu_j^{2s}w_j$, then by (\ref{res1}) we have 
\begin{equation}\label{res1+}
   \begin{split}
 (-\Delta)^s w_j-(2_s^{*}-1) U^{2_s^*-2}w_j+V(x_j+\mu_j x)v_j=F(w_j) \ \ \text{in}\ \ \mathbb{R}^N,
 \end{split}
 \end{equation}
where  $$F(w_j)=\mu_j^{-2s}\left[(U+\mu_j^{2s}w_j)^{2_s^*-1-\epsilon}-(2_s^{*}-1) \mu_j^{2s}U^{2_s^*-2}w_j-U^{2^*_s-1}\right].$$

\noindent Define the operator $L$ as follows:
$$L:=
(-\Delta)^s  -(2_s^{*}-1) U^{2_s^*-2}.$$
Then (\ref{res1+})  can be rewritten  as $$L w_j+V(x_j+\mu_j x)v_j=F(w_j)\ .$$

\begin{proposition} \label{PR2} Assume $N>6s$. Then $w_j\rightarrow w$ in $L^\infty(\mathbb{R}^N)$ as $j\rightarrow \infty$, where $w$ is a bounded solution of
\begin{equation}\label{res1+1}
   \begin{split}
 (-\Delta)^s w -(2_s^{*}-1) U^{2_s^*-2}w +V(x_0)U+ \widetilde{C}(N,s)U^{2^*_s-1} \ln U=0 \ \ \text{in}\ \ \mathbb{R}^N.
 \end{split}
 \end{equation}
\end{proposition}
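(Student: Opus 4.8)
The plan is to pass to the limit $j\to\infty$ in the equation
\[
Lw_j+V(x_j+\mu_j x)v_j=F(w_j),\qquad L:=(-\Delta)^s-(2_s^*-1)U^{2_s^*-2},
\]
recorded before the statement, after having obtained a uniform $L^\infty$ bound for $\{w_j\}$. First I would collect the ingredients that will be used repeatedly: $\mu_j\to0$, $x_j\to x_0$, $\mu_j^{\epsilon_j}\to1$, $v_j\to U$ in $C^{2,\alpha}_{loc}(\mathbb{R}^N)$ and in $D^s(\mathbb{R}^N)$ with $[v_j-U]_s=\mu_j^{2s}[w_j]_s\to0$ (Lemma \ref{conver2}), $v_j\le CU$ (Lemma \ref{comp}), and, crucially, $\epsilon_j/\mu_j^{2s}\to\widetilde C_{N,s}$ by (\ref{Lo3}) (equivalently by (\ref{asy6})). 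Since $N>6s$ is the same as $1<2_s^*-1<2$, the power nonlinearity is sub-quadratic; combining the elementary inequality $|(a+b)^q-a^q-qa^{q-1}b|\le C_q|b|^q$ (valid for $a\ge0$, $a+b\ge0$, $1<q\le2$) with $U^{-\epsilon_j}=1-\epsilon_j\ln U+O(\epsilon_j^2(\ln U)^2)$ gives, as soon as $\|w_j\|_\infty$ stays bounded,
\[
F(w_j)=-\frac{\epsilon_j}{\mu_j^{2s}}\,U^{2_s^*-1}\ln U+o(1)\ \longrightarrow\ -\widetilde C_{N,s}\,U^{2_s^*-1}\ln U\qquad\text{uniformly on }\mathbb{R}^N.
\]

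The heart of the matter is the uniform bound $\|w_j\|_{L^\infty(\mathbb{R}^N)}\le C$, which I would prove in two steps. First, a uniform bound $[w_j]_s\le C$: testing the equation with $w_j$ yields
\[
[w_j]_s^2-(2_s^*-1)\int_{\mathbb{R}^N}U^{2_s^*-2}w_j^2\,dx=\int_{\mathbb{R}^N}F(w_j)\,w_j\,dx-\int_{\mathbb{R}^N}V(x_j+\mu_j x)\,v_j w_j\,dx.
\]
By the non-degeneracy of the Aubin--Talenti bubble $U$, the left-hand quadratic form is coercive on the $L^2$-orthogonal complement of $\ker L$, which among radial functions is spanned solely by the dilation mode $\tfrac{N-2s}{2}U+x\cdot\nabla U$; since $w_j$ is radial with $w_j(0)=0$ whereas this mode does not vanish at the origin, the component of $w_j$ along $\ker L$ is controlled and coercivity may be used. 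On the right, $|v_jw_j|\le CU|w_j|$ and $\int_{\mathbb{R}^N}U|w_j|\le\|U\|_{(2_s^*)'}\|w_j\|_{2_s^*}$, and here $N>6s$ enters essentially: since $U(x)\sim|x|^{2s-N}$, one has $U\in L^{(2_s^*)'}(\mathbb{R}^N)$ precisely when $\tfrac{2N}{N+2s}>\tfrac{N}{N-2s}$, i.e. $N>6s$. The nonlinear term splits, through the expansion of $F(w_j)$, into a part bounded in $L^{(2_s^*)'}$ and a super-quadratic part $\lesssim(\mu_j^{2s}[w_j]_s)^{2_s^*-2-\epsilon_j}[w_j]_s^2=o([w_j]_s^2)$, using $[v_j-U]_s=\mu_j^{2s}[w_j]_s\to0$ from Lemma \ref{conver2}. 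Together with the fractional Sobolev inequality this gives $[w_j]_s\le C$. Second, Moser's iteration of Theorem \ref{TH5}, applied to $(-\Delta)^sw_j\le a_j(x)w_j+g_j(x)$ with $a_j=(2_s^*-1)U^{2_s^*-2}\in L^t_{loc}(\mathbb{R}^N)$, $t>\tfrac N{2s}$, upgrades the resulting $L^{2_s^*}$-bound to $\|w_j\|_{L^\infty(B_R)}\le C$; combining this with the Bessel-kernel comparison already used in Lemmas \ref{DEC0}--\ref{DEC} produces the uniform decay estimate $|w_j(x)|\le C(1+|x|)^{-(N-4s)}$.

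With these uniform bounds at hand, standard regularity for $(-\Delta)^s$ (as in Lemma \ref{unbound}, together with Proposition \ref{lm21}) shows that $\{w_j\}$ is bounded in $C^{0,\alpha}_{loc}(\mathbb{R}^N)$, so along a subsequence $w_j\to w$ in $C^0_{loc}(\mathbb{R}^N)$; the uniform decay then upgrades this to convergence in $L^\infty(\mathbb{R}^N)$, with $w$ bounded. Passing to the limit in $Lw_j+V(x_j+\mu_j x)v_j=F(w_j)$ is now routine: $V(x_j+\mu_j x)v_j\to V(x_0)U$ (uniformly on compact sets, dominated at infinity via $(V_1)$ and $v_j\le CU$), $(2_s^*-1)U^{2_s^*-2}w_j\to(2_s^*-1)U^{2_s^*-2}w$, and $F(w_j)\to-\widetilde C_{N,s}U^{2_s^*-1}\ln U$ by the expansion above, so that $w$ solves (\ref{res1+1}). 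I expect the main obstacle to be precisely the uniform a priori estimate of the previous paragraph: handling the component of $w_j$ along the one-dimensional kernel of $L$ and extracting the correct decay rate, the integrability $U\in L^{(2_s^*)'}(\mathbb{R}^N)$ --- equivalently $N>6s$ --- being exactly what makes the forcing term $\int_{\mathbb{R}^N}U|w_j|$ admissible in the energy identity.
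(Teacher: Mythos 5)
Your outline of the limit procedure (the expansion of $F(w_j)$, the convergence $V(x_j+\mu_j x)v_j\to V(x_0)U$, and the final passage to the limit) is consistent with what the paper does, but it is entirely conditional on the uniform bound $\|w_j\|_{L^\infty(\mathbb{R}^N)}\leq C$, and the argument you propose for that bound does not work. First, the coercivity claim is false: the quadratic form $Q(\phi)=[\phi]_s^2-(2_s^*-1)\int U^{2_s^*-2}\phi^2\,dx$ is \emph{not} coercive on the orthogonal complement of $\ker L$. Indeed, since $(-\Delta)^sU=U^{2_s^*-1}$, the function $U$ is the first eigenfunction of the weighted problem $(-\Delta)^s\phi=\lambda(2_s^*-1)U^{2_s^*-2}\phi$ with eigenvalue $\frac{1}{2_s^*-1}<1$, so $Q(U)=-(2_s^*-2)\int U^{2_s^*}\,dx<0$ while $U$ is (weighted-)orthogonal to the kernel modes; coercivity holds only after projecting away this negative direction as well, and nothing in your argument controls the component of $w_j$ along it. Second, the reduction of $\ker L$ to the single dilation mode presupposes that $w_j$ is radial, but Proposition \ref{PR2} belongs to Section \ref{S3}, where $V$ is a general potential satisfying $(V_1)$--$(V_2)$; radial symmetry is only assumed in Section \ref{S4}, so the translation modes $\partial U/\partial x_i$ cannot be discarded here. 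Third, even granting radiality, the pointwise condition $w_j(0)=0$ does not control the $L^2$ (or weighted) projection of $w_j$ onto the kernel: a function may vanish at the origin and still have an arbitrarily large component along $\frac{N-2s}{2}U+x\cdot\nabla U$. Turning this pointwise information into control of the kernel component requires precisely the quantitative $C^0$/$C^1$ smallness that is the hard part of the proof.

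The paper circumvents both difficulties by a different mechanism: it writes $w_j=\sum_{i=1}^{N+1}a_{ij}e_i+z_j$ with $z_j\in Y_r$, proves the linear a priori estimate $\|u\|_{\mathcal{W}^{2s,r}}\leq C(\|Lu\|_r+\|Lu\|_q)$ on $Y_r$ (Lemma \ref{Lm21}) by a compactness--contradiction argument that uses only the \emph{injectivity} of $L$ modulo its kernel, i.e.\ the nondegeneracy Lemma \ref{JMY13}, and not any positivity of $Q$; it then bounds $M_j=\max_i|a_{ij}|$ and $\|z_j\|_{\mathcal{W}^{2s,r}}$ (Lemma \ref{Lm22}) using the expansion of $F(w_j)$ together with \emph{both} conditions $w_j(0)=0$ and $\nabla w_j(0)=0$ (coming from $v_j(0)=U(0)=1$ and the maximum at the origin) to kill all $N+1$ kernel coefficients in the contradiction argument, and finally upgrades to $C^{0,\mu}$ and global $L^\infty$ convergence via the embeddings of Proposition \ref{lm21}. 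If you want to salvage an energy-type argument, you would have to control the projections of $w_j$ onto the kernel \emph{and} onto the first (negative) eigenfunction before invoking any coercivity, which essentially forces you back to a decomposition of the paper's type; as written, your proof of the crucial a priori bound has a genuine gap.
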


\noindent In order to prove Proposition \ref{PR2}, we need the following result from \cite{JMY13} 

\begin{lemma}[Nondegeneracy]\label{JMY13}  The solution $U$ is nondegenerate in the sense that all bounded solutions of equation
$$(-\Delta)^s\phi=(2_s^*-1)U^{2_s^*-2}\phi\ \ \ \text{in}\ \ \mathbb{R}^N$$
are linear combinations of the functions
$$\frac{N-2s}{2}U+x\cdot \nabla U,\ \ \frac{\partial U}{\partial x_i}, \ i=1,2,\cdots,N.$$

\end{lemma}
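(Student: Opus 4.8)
\noindent The plan is to pass to the Caffarelli--Silvestre extension, conformally send the problem to the unit ball $B^{N+1}$ so that the boundary condition becomes a pure eigenvalue problem for the conformal fractional Laplacian on $S^{N}$, and then to diagonalise in spherical harmonics. The inclusion ``$\supseteq$'' is immediate: differentiating the family of solutions $\lambda^{-\frac{N-2s}{2}}U\big(\frac{x-\xi}{\lambda}\big)$ of $(-\Delta)^{s}v=v^{2_{s}^{*}-1}$ with respect to $\xi_{i}$ and to $\lambda$ at $(\lambda,\xi)=(1,0)$ shows that $\partial_{i}U$ ($i=1,\dots,N$) and $\frac{N-2s}{2}U+x\cdot\nabla U$ solve the linearised equation $(-\Delta)^{s}\phi=(2_{s}^{*}-1)U^{2_{s}^{*}-2}\phi$, and they are bounded by (\ref{att}). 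So the content of the lemma is the reverse inclusion. Given a bounded solution $\phi$, let $\Phi$ be its bounded $s$-harmonic extension to $\mathbb{R}^{N+1}_{+}$, so that $\text{div}(y^{1-2s}\nabla\Phi)=0$ in $\mathbb{R}^{N+1}_{+}$ and $\frac{\partial\Phi}{\partial\nu^{s}}=(2_{s}^{*}-1)U^{2_{s}^{*}-2}\phi$ on $\mathbb{R}^{N}\times\{0\}$, with $\frac{\partial}{\partial\nu^{s}}$ the weighted conormal derivative used in the proof of Theorem \ref{Pohozae}.

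\noindent By (\ref{att}), $U$ is, up to the fixed dilation $\lambda$ appearing there, the standard conformal factor on $S^{N}$ transplanted to $\mathbb{R}^{N}$ by stereographic projection. Using the conformal covariance of $\text{div}(y^{1-2s}\nabla\,\cdot\,)$ together with this $S^{N}$-symmetry, one constructs an explicit M\"obius-type diffeomorphism carrying $\mathbb{R}^{N+1}_{+}$ onto $B^{N+1}$ and $\mathbb{R}^{N}\times\{0\}$ onto $S^{N}=\partial B^{N+1}$, under which $\Phi$ becomes a function $\Psi$ on $B^{N+1}$ solving a degenerate equation of the same structure whose boundary Dirichlet-to-Neumann operator on $S^{N}$ is the conformally covariant operator $P_{2s}$ of order $2s$; crucially, in these coordinates the boundary coefficient $(2_{s}^{*}-1)U^{2_{s}^{*}-2}$ turns into a constant, while boundedness of $\phi$ places $\Psi$ in the natural energy/regularity class up to $S^{N}$. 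The linearised equation thus reads
\[
P_{2s}\,\widetilde\phi=\lambda_{1}\,\widetilde\phi\quad\text{on }S^{N},
\]
where $\widetilde\phi$ is the (bounded) boundary trace of $\Psi$ and $\lambda_{1}$ is the constant forced by consistency with the known kernel element $\partial_{1}U$ (equivalently, it is read off directly from the conformal factor).

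\noindent Now recall that $P_{2s}$ on $S^{N}$ is diagonalised by the spaces $\mathcal{H}_{k}$ of degree-$k$ spherical harmonics, with eigenvalue $\lambda_{k}=\Gamma\!\big(k+\tfrac{N}{2}+s\big)/\Gamma\!\big(k+\tfrac{N}{2}-s\big)$, and that $k\mapsto\lambda_{k}$ is strictly increasing (because $\log\Gamma$ is strictly convex, hence the digamma function is strictly increasing). Therefore $\ker\!\big(P_{2s}-\lambda_{1}\big)=\mathcal{H}_{1}$, an $(N+1)$-dimensional space; in particular $\mathcal{H}_{0}$, the constants — which correspond to $U$ itself — is \emph{not} in the kernel, in agreement with $(-\Delta)^{s}U-(2_{s}^{*}-1)U^{2_{s}^{*}-2}U=(2-2_{s}^{*})U^{2_{s}^{*}-1}\not\equiv0$. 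Transforming back, the $N$ ``equatorial'' coordinate functions of $S^{N}\subset\mathbb{R}^{N+1}$ correspond to $\partial_{1}U,\dots,\partial_{N}U$ and the ``axial'' one corresponds to $\frac{N-2s}{2}U+x\cdot\nabla U$; hence every bounded solution $\phi$ is a linear combination of these $N+1$ functions, which is the asserted nondegeneracy.

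\noindent The main obstacle is the construction in the second step: exhibiting the M\"obius diffeomorphism under which the boundary coefficient genuinely becomes constant — this rests on the conformal covariance of $\text{div}(y^{1-2s}\nabla\,\cdot\,)$ and on the $S^{N}$-symmetry of $U$ from (\ref{att}) — and identifying the induced boundary operator with $P_{2s}$ together with the closed form of its spectrum, which ultimately reduces to solving a hypergeometric ODE in the radial variable of $B^{N+1}$ while keeping track of the boundedness/admissibility conditions. Granted these ingredients, the spectral step and the dimension count $1+N$ above are routine; this is, in essence, the argument of \cite{JMY13}.
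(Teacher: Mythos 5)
The paper itself offers no proof of this lemma: it is imported from \cite{JMY13}, so the only meaningful comparison is with that reference, and your sketch does follow the standard conformal route: Caffarelli--Silvestre extension, a M\"obius map turning the half-space model into the ball so that the boundary operator becomes the intertwining operator $P_{2s}$ on $S^N$, its spectrum $\lambda_k=\Gamma\bigl(k+\tfrac N2+s\bigr)/\Gamma\bigl(k+\tfrac N2-s\bigr)$ on spherical harmonics, strict monotonicity in $k$, the identity $(2_s^*-1)\lambda_0=\lambda_1$, and the pull-back of $\mathcal{H}_1$ to $\partial_1U,\dots,\partial_NU$ and $\tfrac{N-2s}{2}U+x\cdot\nabla U$. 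Together with the easy inclusion you checked by differentiating the family of bubbles, this is a legitimate strategy, granted the citable ingredients you explicitly list as the ``main obstacle''.

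There is, however, one unflagged step that is not justified and, as stated, fails exactly in the paper's regime $N>4s$: the assertion that boundedness of $\phi$ alone places the transplanted function in the natural class on $S^N$. The conformal transplant divides by the bubble, $\widetilde\phi(\sigma(x))=c\,(1+|x|^2)^{\frac{N-2s}{2}}\phi(x)$, so from $\phi\in L^\infty(\mathbb{R}^N)$ you only get $|\widetilde\phi|\lesssim d^{-(N-2s)}$ near the north pole ($d$ the distance to the pole), and this bound does not put $\widetilde\phi$ in $L^2(S^N)$ (nor in the form domain of $P_{2s}$) once $N\geq 4s$; hence the expansion of $\widetilde\phi$ in spherical harmonics, which is the heart of your argument, is not licensed. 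The standard repair is to prove first that bounded kernel elements decay: using a Liouville theorem for bounded $s$-harmonic functions write $\phi=c_0+(2_s^*-1)(-\Delta)^{-s}\bigl(U^{2_s^*-2}\phi\bigr)$, rule out the constant $c_0$, and bootstrap the Riesz potential with $U^{2_s^*-2}\lesssim(1+|x|)^{-4s}$ until $|\phi(x)|\lesssim(1+|x|)^{-(N-2s)}$, which makes $\widetilde\phi$ bounded on $S^N$; alternatively one can prove a removable-singularity statement for $P_{2s}-\lambda_1$ at the pole. Either way this requires an actual argument (excluding a nonzero limit of $\phi$ at infinity included), and without it your spectral step does not close.
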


\noindent Let $$X=\text{span}\left\{\frac{\partial U}{\partial x_1}, \cdots, \frac{\partial U}{\partial x_N}, \frac{N-2s}{2}U+x\cdot \nabla U \right\}$$
Clearly, $X\subset L^p(\mathbb{R}^N)$ with $\frac{N}{N-2s}<p<+\infty$.
For $1<r<\frac{N}{2s}$, define
$$Y_r:=\left\{u\in L^r(\mathbb{R}^N):\int_{\mathbb{R}^N}uvdx=0\ \ \text{for all}\ \ v\in X\right\}\ .$$
Then $L^r(\mathbb{R}^N)=X\oplus Y_q,$ where $\frac{N}{N-2s}<r<\frac{N}{2s}$.

\begin{lemma}\label{Lm21}
Suppose $N>4s$. Then for any  $1<q<\frac{N}{4s}$, there exists a constant $C>0$ such that
  \begin{equation}\label{Le21+1}\|u\|_{ \mathcal{W}^{2s,r}}\leq C (\|Lu\|_{r}+\|Lu\|_{q}), \end{equation}
for all $u\in Y_r\ \cap   \mathcal{W}^{2s,r}(\mathbb{R}^N) \cap C^2(\mathbb{R}^N)  $  with $Lu\in L^q(\mathbb{R}^N)$,  $\frac{1}{q}-\frac{2s}{N}=\frac{1}{r}$.
\end{lemma}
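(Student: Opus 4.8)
\noindent The plan is to prove \eqref{Le21+1} by contradiction together with a concentration--compactness type argument. I would write $L=(-\Delta)^s-V_0$ with $V_0:=(2_s^*-1)U^{2_s^*-2}$. By \eqref{att} one has $U(x)\sim|x|^{2s-N}$ as $|x|\to\infty$, and since $(2_s^*-2)(N-2s)=4s$ it follows that $V_0(x)\sim|x|^{-4s}$ at infinity; hence $V_0\in L^\infty(\mathbb{R}^N)$, $V_0(x)\to0$ as $|x|\to\infty$, and $V_0\in L^m(\mathbb{R}^N)$ for every $m\in(\tfrac{N}{4s},\infty]$, in particular $V_0\in L^{N/(2s)}(\mathbb{R}^N)$. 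Recall that $\tfrac1q=\tfrac1r+\tfrac{2s}{N}$, and that the assumption $q<\tfrac{N}{4s}$ (i.e.\ $N>4s$) is exactly what makes $\tfrac1{q^{**}}:=\tfrac1q-\tfrac{4s}{N}$ positive, so that $\mathcal{W}^{2s,r}(\mathbb{R}^N)$ embeds continuously into $L^{q^{**}}(\mathbb{R}^N)$ (via $\mathcal{W}^{2s,r}=\mathcal{L}^{2s,r}$ and Proposition \ref{lm21}) and compactly into $L^p_{\rm loc}(\mathbb{R}^N)$ for every $p<q^{**}$. First I would suppose \eqref{Le21+1} fails: then there exist $u_n\in Y_r\cap\mathcal{W}^{2s,r}(\mathbb{R}^N)\cap C^2(\mathbb{R}^N)$ with $Lu_n\in L^q(\mathbb{R}^N)$, $\|u_n\|_{\mathcal{W}^{2s,r}}=1$ and $\|Lu_n\|_r+\|Lu_n\|_q\to0$. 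Passing to a subsequence, $u_n\rightharpoonup u$ in $\mathcal{W}^{2s,r}$, $u_n\to u$ in $L^p_{\rm loc}$ for $p<q^{**}$ and a.e.; since $X\subset L^{r'}(\mathbb{R}^N)$ (because $r'>\tfrac{N}{N-2s}$), the orthogonality relations $\int_{\mathbb{R}^N}u_nv=0$, $v\in X$, pass to the limit and $u\in Y_r$, while passing to the distributional limit in $(-\Delta)^su_n=Lu_n+V_0u_n$ (using $Lu_n\to0$ in $L^r$ and $V_0u_n\to V_0u$ in $L^1_{\rm loc}$, as $V_0\in L^\infty_{\rm loc}$) gives $Lu=0$, i.e.\ $(-\Delta)^su=V_0u$ in $\mathbb{R}^N$.

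\noindent The next step is to show that necessarily $u\equiv0$. Since $u\in L^r(\mathbb{R}^N)$ and $V_0u\in L^q(\mathbb{R}^N)$, a Fourier-side computation (and the absence of nonzero polynomials in $L^r$) yields the representation $u=\mathcal{I}_{2s}[V_0u]$, where $\mathcal{I}_{2s}$ denotes convolution with the Riesz kernel of order $2s$. One then bootstraps: fixing $m$ so large that $\tfrac1m<\tfrac{2s}{N}$, if $u\in L^{p_k}$ then $V_0u\in L^\sigma$ with $\tfrac1\sigma=\tfrac1m+\tfrac1{p_k}$, hence by Hardy--Littlewood--Sobolev $u=\mathcal{I}_{2s}[V_0u]\in L^{p_{k+1}}$ with $\tfrac1{p_{k+1}}=\tfrac1{p_k}-\bigl(\tfrac{2s}{N}-\tfrac1m\bigr)$; after finitely many iterations the exponent drops below $N/(2s)$ and Morrey's embedding gives $u\in C^{0,\alpha}(\mathbb{R}^N)$, in particular $u\in L^\infty(\mathbb{R}^N)$. (Equivalently one may first obtain $u\in L^\infty_{\rm loc}$ from a Moser iteration as in Theorem \ref{TH5} applied to $|u|$ through Kato's inequality $(-\Delta)^s|u|\le V_0|u|$, and then $|u(x)|\lesssim|x|^{-(N+2s)}$ by a Bessel-kernel comparison as in the proof of Lemma \ref{DEC}.) Now the nondegeneracy Lemma \ref{JMY13} forces $u\in X$; since also $u\in Y_r$ and $L^r(\mathbb{R}^N)=X\oplus Y_r$, we conclude $u\equiv0$.

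\noindent Finally I would upgrade $u_n\rightharpoonup0$ to $u_n\to0$ in $\mathcal{W}^{2s,r}$, which contradicts $\|u_n\|_{\mathcal{W}^{2s,r}}=1$ and proves the lemma. Writing $u_n=\mathcal{I}_{2s}[Lu_n]+\mathcal{I}_{2s}[V_0u_n]$ and applying Hardy--Littlewood--Sobolev with $\tfrac1q=\tfrac1r+\tfrac{2s}{N}$, together with the splitting $V_0=V_0\mathbf{1}_{B_\rho}+V_0\mathbf{1}_{B_\rho^c}$ with $\|V_0\mathbf{1}_{B_\rho^c}\|_{L^{N/(2s)}}$ chosen as small as we wish, one gets $\|u_n\|_r\le C\|Lu_n\|_q+C\|u_n\|_{L^q(B_\rho)}+C\|V_0\mathbf{1}_{B_\rho^c}\|_{L^{N/(2s)}}\|u_n\|_r$; since $u_n\to0$ in $L^q_{\rm loc}$ (because $q<q^{**}$ and $u\equiv0$) and $\sup_n\|u_n\|_r<\infty$, absorbing the last term yields $\|u_n\|_r\to0$. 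Inserting this into the multiplier (Calder\'on--Zygmund) estimate $\|u_n\|_{\mathcal{W}^{2s,r}}\le C\bigl(\|(-\Delta)^su_n\|_r+\|u_n\|_r\bigr)\le C\bigl(\|Lu_n\|_r+\|V_0u_n\|_r+\|u_n\|_r\bigr)$ and estimating $\|V_0u_n\|_r$ by the same $B_\rho/B_\rho^c$ truncation (now using H\"older with $\tfrac1r=\tfrac{2s}{N}+\tfrac1{q^{**}}$ on the exterior piece and $\|u_n\|_{q^{**}}\le C$ there) gives $\|u_n\|_{\mathcal{W}^{2s,r}}\to0$, the desired contradiction.

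\noindent The main obstacle is the middle step: passing from an $L^r$-solution of $Lu=0$ to a genuinely bounded solution, so that the nondegeneracy Lemma \ref{JMY13}, stated only for bounded solutions, can be invoked; this is precisely where the sharp decay $V_0(x)\sim|x|^{-4s}$ and the standing assumptions $N>4s$ and $q<\tfrac{N}{4s}$ (which make $q^{**}$ well defined and guarantee the embedding $\mathcal{W}^{2s,r}\hookrightarrow L^{q^{**}}$ and the solvability of the bootstrap) enter. A secondary technical point is that $V_0$ lies exactly in the endpoint space $L^{N/(2s)}(\mathbb{R}^N)$, so the non-compactness of $\mathbb{R}^N$ must be handled by the $B_\rho/B_\rho^c$ truncation rather than by a single global H\"older estimate.
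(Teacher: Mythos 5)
Your proposal is correct, and it rests on the same pillars as the paper's own argument: a compactness-by-contradiction scheme built on the Riesz-potential representation $u=\mathcal{I}_{2s}[(-\Delta)^su]$, Hardy--Littlewood--Sobolev at the exponent relation $\frac1q=\frac1r+\frac{2s}{N}$, the decay $U^{2_s^*-2}\sim|x|^{-4s}$ (so that its tail is small in $L^{N/(2s)}$), the nondegeneracy Lemma \ref{JMY13}, and $X\cap Y_r=\{0\}$. The execution, however, differs in a genuine way. You normalize $\|u_n\|_{\mathcal{W}^{2s,r}}=1$, whereas the paper normalizes $\max\{\|u_n\|_{C^2},\|u_n\|_r\}=1$; consequently the paper gets boundedness of the limit for free from the $C^2$ bound, while you must produce it by the Riesz-potential bootstrap (or the Moser/Bessel-kernel alternative you sketch) before invoking Lemma \ref{JMY13}. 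Moreover, the paper shows the normalized sequence is Cauchy in $L^r$ directly, splitting $I\ast[U^{2_s^*-2}(u_n-u_m)]$ over $B_R$ and $B_R^c$ (the analogue of your $B_\rho/B_\rho^c$ truncation of $V_0$), whereas you first identify the weak limit as $0$ and then upgrade to strong $\mathcal{W}^{2s,r}$ convergence by an absorption argument. What your route buys is that the contradiction is reached in the norm the lemma actually controls: your $\|u_n\|_{\mathcal{W}^{2s,r}}=1$ against $\|u_n\|_{\mathcal{W}^{2s,r}}\to0$ is airtight, and you avoid leaning on the $C^2$ norm, which the inequality \eqref{Le21+1} does not control (in the paper's branch $\|u_n\|_{C^2}=1$, $\|u_n\|_r<1$, a zero limit does not immediately clash with the normalization); the price is the extra bootstrap and the truncation/absorption step. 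Two small points to tidy up: Proposition \ref{lm21}(1) is stated for $0<\beta<1$, so for $\beta=2s\geq1$ you should invoke the standard Sobolev embedding for the Bessel potential spaces $\mathcal{L}^{2s,r}=\mathcal{W}^{2s,r}$ (as the paper itself implicitly does), and in the bootstrap you should note that the auxiliary exponent $m$ can be adjusted at each step so that you never land exactly on the endpoints of the Hardy--Littlewood--Sobolev or Morrey embeddings.
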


\begin{proof} It is enough to prove
$$\|u\|_{r}\leq C (\|Lu\|_{r}+\|Lu\|_{q}).$$
In fact, by
$$ (-\Delta)^s u +u=Lu+[1-(2_s^{*}-1) U^{2_s^*-2}]u $$
we get
$$\|u\|_{ \mathcal{W}^{2s,r}}\leq \|Lu\|_r+C\|u\|_r\leq C (\|Lu\|_{r}+\|Lu\|_{q}).$$

\noindent Assume that $u\not=0$. Otherwise, we are done.  By homogeneity, we can replace $u$ by $\frac{u}{\max\{\|u\|_{C^2},\|u\|_r\} }$ in (\ref{Le21+1}). Thus,
assume that there exists a sequence $\{u_n\}\subset Y_r\ \cap \mathcal{W}^{2s,r}(\mathbb{R}^N) \cap  C^2(\mathbb{R}^N) $   such that
   \begin{equation}\label{Lo9+0}
   \begin{split} \text{either} \ \ \|u_n\|_{C^2}=1,\ \ \|u_n\|_r<1, \ \ \text{or}\ \ \|u_n\|_{C^2}<1,\ \ \|u_n\|_r=1,
 \end{split}
 \end{equation}and
 \begin{equation}\label{Lo9+1}
   \begin{split}   \|Lu_n\|_{q}+ \|Lu_n\|_{r}\rightarrow 0.
 \end{split}
 \end{equation}
   Then, there exists $u_\infty\in  C^2(\mathbb{R}^N)$ such that after passing to a subsequence if necessary, $u_n\rightarrow u_\infty$ in $C_{loc}^2(\mathbb{R}^N)$ and in particular,  $u_n\rightarrow u_\infty$ in $L_{loc}^t(\mathbb{R}^N)$, $r\leq t<2^*_s$.   Let $I=(-\Delta)^{-s}$   the Riesz potentials defined by
   $$(I\ast f)(x)=\frac{1}{\gamma(s)}\int_{\mathbb{R}^N}\frac{f(y)}{|x-y|^{N-2s}}dy$$ with
   $$\gamma(s)=\pi^{\frac{N}{2}} 2^{2s}\Gamma(s)/\Gamma(N/2-s).$$  See Chapter V in \cite{Stein70}.
Then,  we have
$$u_n-I \ast [(2_s^*-1) U^{2_s^*-2} u_n]=I \ast Lu_n.$$
By Hardy-Littlewood-Sobolev inequality \cite{Lib83, Stein70}, we have
$$\|I\ast Lu_n \|_{r}\leq C\|Lu_n\|_{q}\rightarrow 0 $$
and H\"{o}lder's inequality yields
\begin{equation}\label{Lo9}
   \begin{split}
  & \|I \ast [(2_s^*-1) U^{2_s^*-2} (u_n-u_m)]\|_{r} \\
  &=(2_s^*-1)\left[\int_{B_R(0)}|I\ast U^{2_s^*-2}(u_n-u_m)|^{r}dx+ \int_{B_R^c(0)}|I\ast U^{2_s^*-2}(u_n-u_m)|^{r}dx\right]^{\frac{1}{r}}\\
  &\leq C \|U^{2_s^*-2}(u_n-u_m)\|_{L^{q}({B_R(0)})}+C\|u_n-u_m\|_{r}\left(\int_{|x|\geq R}U^{\frac{2N}{N-2s}}dx\right)^{\frac{2s}{N}}\\
  &\leq C \| u_n-u_m \|_{L^{r}({B_R(0)})}+C\|u_n-u_m\|_{r}\left(\int_{|x|\geq R}U^{\frac{2N}{N-2s}}dx\right)^{\frac{2s}{N}}\ ,
 \end{split}
 \end{equation}
where  $\frac{1}{q}-\frac{2s}{N}=\frac{1}{r}$.

\noindent Thus, $\{I \ast [(2_s^*-1) U^{2_s^*-2} u_n\}$ is a Cauchy sequence in $L^{r}(\mathbb{R}^N)$ and then $\{u_n\}$ is a Cauchy sequence in $L^{r}(\mathbb{R}^N)$. So, $ u_\infty \in L^r(\mathbb{R}^N)$, $u_\infty\in Y_r$ and    \begin{equation}\label{Lo9+}(-\Delta)^s u_\infty-(2_s^*-1)U^{2_s^*-2} u_\infty=0  \ \ \ \text{in}\ \ \mathbb{R}^N.\end{equation} By (\ref{Lo9+0}),    $u_\infty \in L^\infty(\mathbb{R}^N)$ and   $u_\infty\in X$ by (\ref{Lo9+}). But since $u_\infty\in Y_{r}$, we get $u_\infty\equiv 0$, which is a contradiction from (\ref{Lo9+0}).

\end{proof}

\noindent For fixed $j=1,2,\cdots$, we have 
\begin{equation}\label{res1+1}
   \begin{split}
 (-\Delta)^s v_j +v_j=(1+\mu_j^{2s}V(x_j+\mu_j x))v_j+v_j^{2_s^*-1-\epsilon_j} \ \ \text{in} \ \ \mathbb{R}^N.
 \end{split}
 \end{equation}
Note that $0\leq v_j\leq 1$. Thus,  $v_j\in \mathcal{W}^{2s,p}(\mathbb{R}^N) \cap C^{2,\beta}(\mathbb{R}^N)$  for   $p\in [2,+\infty) $ and $w_j\in   \mathcal{W}^{2s,p}(\mathbb{R}^N) \cap C^{2,\beta}(\mathbb{R}^N)$ for   $2\leq p<+\infty$.

\noindent Let
$$w_j=\sum_{i=1}^{N+1} a_{ij}e_i+z_j,\ \ j=1,2,\cdots,$$ where
$e_1=\frac{\partial U}{\partial x_i}$, $i=1,2,\cdots,N$, $e_{N+1}=\frac{N-2s}{2}U+x\cdot \nabla U,$ $z_j\in Y_q\cap C^{2,\beta}(\mathbb{R}^N),$ $\frac{1}{q}+\frac{1}{p}=1.$

\begin{lemma}  \label{Lm22} Assume $N>6s$ and
let $M_j=\max\{|a_{1j}|,|a_{2j}|,\cdots, |a_{(N+1)j}|\}$. Then
$M_j $ and $\|z_j\|_{\mathcal{W}^{2s,r}}$ are bounded as $j\rightarrow\infty$.

\end{lemma}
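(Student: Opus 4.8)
The plan is a blow-up argument for the rescaled correctors, based on the linearized operator $L=(-\Delta)^s-(2_s^*-1)U^{2_s^*-2}$ and its nondegeneracy. Write $p_j=2_s^*-1-\epsilon_j$. The starting point is the observation that, since $e_1,\dots,e_{N+1}$ span $\ker L$ (Lemma \ref{JMY13}), the equation $Lw_j+V(x_j+\mu_j x)v_j=F(w_j)$ gives $Lz_j=Lw_j=F(w_j)-V(x_j+\mu_j x)v_j$, and testing against each $e_i$ (and integrating by parts, using $Le_i=0$) shows $Lz_j\in Y_r$; hence Lemma \ref{Lm21} applies,
\[
\|z_j\|_{\mathcal{W}^{2s,r}}\le C\Big(\|F(w_j)-V(x_j+\mu_j x)v_j\|_{r}+\|F(w_j)-V(x_j+\mu_j x)v_j\|_{q}\Big),
\]
and the problem reduces to a uniform $L^r\cap L^q$ bound for the right-hand side, together with a control of the coefficients $a_{ij}$.

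Suppose, for contradiction, that $N_j:=M_j+\|z_j\|_{\mathcal{W}^{2s,r}}\to\infty$ along a subsequence, and normalize: $\widehat a_{ij}=a_{ij}/N_j$, $\widehat z_j=z_j/N_j$, $\widehat w_j=w_j/N_j=\sum_i\widehat a_{ij}e_i+\widehat z_j$, so that $\max_i|\widehat a_{ij}|+\|\widehat z_j\|_{\mathcal{W}^{2s,r}}=1$. Split
\[
F(w_j)=\mu_j^{-2s}\!\left(U^{p_j}-U^{2_s^*-1}\right)+\big(p_jU^{p_j-1}-(2_s^*-1)U^{2_s^*-2}\big)w_j+\mu_j^{-2s}R_j ,
\]
with $R_j=v_j^{p_j}-U^{p_j}-p_jU^{p_j-1}(v_j-U)$; by Taylor together with $\mu_j^{2s}|w_j|=|v_j-U|\le CU$ (Lemma \ref{comp}) one has $|R_j|\le C(\mu_j^{2s}|w_j|)^2U^{p_j-2}$. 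Using $\epsilon_j=O(\mu_j^{2s})$ (Remark \ref{REM1}) and $U^{-\epsilon_j}=1-\epsilon_j\ln U+O\big(\epsilon_j^2(\ln U)^2\big)$, the first group is bounded in $L^r\cap L^q$ by a constant (in fact it converges to $-\widetilde C_{N,s}U^{2_s^*-1}\ln U$), the coefficient multiplying $w_j$ in the second group is $O(\mu_j^{2s})(1+|\ln U|)U^{2_s^*-2}$, and the remainder carries the prefactor $\mu_j^{2s}$. Combining these with $v_j-U\le CU$ and the decay of $v_j$ (Lemma \ref{DEC0}) — region by region, a bulk region where $\widehat w_j$ is controlled and a tail region where $U$ and the relevant weights are integrable — and with the $\ker L\oplus Y_r$ splitting, one arrives at
\[
\tfrac1{N_j}\,\big\|F(w_j)-V(x_j+\mu_j x)v_j\big\|_{L^r\cap L^q}\longrightarrow 0 .
\]
Here $N>6s$ is used to put $U$, $U^{2_s^*-1}$ and $U^{2_s^*-1}|\ln U|$ in $L^q$ for some admissible exponent $q\in(\tfrac N{N-2s},\tfrac N{4s})$, an interval that is nonempty precisely when $N>6s$. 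By Lemma \ref{Lm21} we then get $\|\widehat z_j\|_{\mathcal{W}^{2s,r}}\to 0$, so $\max_i|\widehat a_{ij}|\to 1$; passing to a subsequence, $\widehat a_{ij}\to\widehat a_i$ with $\max_i|\widehat a_i|=1$, and a standard elliptic bootstrap on the equation for $\widehat w_j$ yields $\widehat w_j\to\widehat w:=\sum_i\widehat a_ie_i$ in $C^1_{\mathrm{loc}}$.

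The contradiction now comes from the maximality of $v_j$ at the origin. Since $v_j(0)=\|v_j\|_\infty=1=U(0)$ and $0$ is a maximum point of $v_j$, we have $w_j(0)=0$ and $\nabla w_j(0)=0$, whence $\widehat w(0)=0$ and $\nabla\widehat w(0)=0$. But $U$ is radial with $\nabla U(0)=0$ and $D^2U(0)=\tfrac{2s-N}{\lambda^2}I$ (nondegenerate), so evaluating $\widehat w=\sum_{i\le N}\widehat a_i\,\partial_i U+\widehat a_{N+1}\big(\tfrac{N-2s}{2}U+x\cdot\nabla U\big)$ and its gradient at the origin forces $\widehat a_{N+1}=0$ and $\widehat a_1=\dots=\widehat a_N=0$, contradicting $\max_i|\widehat a_i|=1$. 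Therefore $N_j$ is bounded, which is the assertion.

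The hard part is the limit $\tfrac1{N_j}\|F(w_j)-V(x_j+\mu_j x)v_j\|_{L^r\cap L^q}\to0$. The crude pointwise bound $|w_j|\le C\mu_j^{-2s}U$ is too lossy — it produces a spurious factor $\mu_j^{-2s}$ — so one must genuinely exploit the quadratic character of $R_j$ together with the decay of $v_j$ (Lemma \ref{DEC0}, or equivalently a secondary comparison argument for $v_j-U$ based on its own equation and on $\epsilon_j=O(\mu_j^{2s})$), splitting $\mathbb{R}^N$ into a bulk region where $\widehat w_j$ is controlled and a far region where the weights are summable, and keeping track of how each piece interacts with the decomposition $w_j=\sum_i a_{ij}e_i+z_j$. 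This estimate — along with placing $U$ and its log-weighted powers in $L^q$ with $q<N/(4s)$ — is exactly the step where the hypothesis $N>6s$ (equivalently $2_s^*<3$ together with $\tfrac N{N-2s}<\tfrac N{4s}$) is indispensable.
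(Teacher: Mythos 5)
Your proof is correct and follows essentially the same route as the paper: a contradiction/normalization argument, the same three-term splitting of $F(w_j)$ controlled via $v_j\le CU$ and $\epsilon_j=O(\mu_j^{2s})$, the a priori estimate of Lemma \ref{Lm21} applied to the $Y_r$-component (legitimate because $z_j\in Y_r$ by construction, which is the hypothesis actually needed, rather than your stated ``$Lz_j\in Y_r$''), a bootstrap to $C^1_{loc}$, and the conditions $w_j(0)=0$, $\nabla w_j(0)=0$ forcing the kernel coefficients to vanish against the nondegeneracy of $U$. The only difference is cosmetic: you normalize by $M_j+\|z_j\|_{\mathcal{W}^{2s,r}}$ and obtain both bounds in one stroke, where the paper normalizes by $M_j$ alone and disposes of the bound on $\|z_j\|_{\mathcal{W}^{2s,r}}$ ``similarly.''
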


\begin{proof}
We may  assume without loss of generality, $M_j\rightarrow +\infty$ as $j\rightarrow \infty$ and
 $$\frac{1}{M_j}(a_{1j},\cdots, a_{(N+1)j})\rightarrow (b_1,\cdots,b_{N+1})\not=0,\ \ \text{as } \ j\rightarrow \infty.$$

 \noindent Then   \begin{equation}\label{Lo8-1}
   \begin{split}
  (-\Delta)^s \frac{z_j}{M_j}=(2_s^{*}-1) U^{2_s^*-2}\frac{z_j}{M_j}+\frac{1}{M_j}\left[F(w_j)-V(x_j+\mu_j x)v_j\right].
 \end{split}
 \end{equation}

\noindent Let us now  now estimate the three terms in the right hand side of equation (\ref{Lo8-1}). We have  \begin{equation}\label{Lo9-1}
   \begin{split}
|\mu_j^{2s}F(w_j)|=&|(U+\mu_j^{2s}w_j)^{2_s^*-1-\epsilon}-(2_s^{*}-1)\mu_j^{2s} U^{2_s^*-2}w_j-U^{2^*_s-1}|\\
\leq& |U^{2^*_s-1}-U^{2^*_s-1-\epsilon_j}|+| (2_s^{*}-1)\mu_j^{2s} U^{2_s^*-2}w_j-(2_s^{*}-1-\epsilon_j)\mu_j^{2s} U^{2_s^*-2-\epsilon_j}w_j|\\
&+|(U+\mu_j^{2s}w_j)^{2_s^*-1-\epsilon_j}-U^{2^*_s-1-\epsilon_j}-(2_s^{*}-1-\epsilon_j)\mu_j^{2s} U^{2_s^*-2-\epsilon_j}w_j|\\
=&I_1+I_2+I_3.
 \end{split}
 \end{equation}

\noindent Hence \begin{equation}\label{Lo9+1}
   \begin{split}
I_1=U^{2^*_s-1-\epsilon_j}|U^{\epsilon_j}-1|=U^{2^*_s-1-\epsilon_j}|\epsilon_j\ln U+o(\epsilon_j)|\leq \epsilon_j U^{2^*_s-1-\epsilon_j} (|\ln U|+1)
 \end{split}
 \end{equation} and since  $v_j\leq CU$ and $\mu_j^{2s}w_j=v_j-U$, we get
 \begin{equation}\label{Lo9+2}
   \begin{split}
I_2
=&\mu_j^{2s}U^{2_s^*-2-\epsilon_j}|w_j| | (2_s^{*}-1) U^{\epsilon_j} -(2_s^{*}-1-\epsilon_j) |\\
\leq &C\epsilon_j \mu_j^{2s}|w_j|U^{2_s^*-2-\epsilon_j}(|\ln U|+1)\\
\leq &C\epsilon_j  U^{2_s^*-1-\epsilon_j}(|\ln U|+1).
 \end{split}
 \end{equation}

\noindent Set 
$g(t)=(U+t\mu_j^{2s}w_j)^{2_s^*-1-\epsilon_j}$. Then,  we obtain  \begin{equation}\label{Lo9+3}
   \begin{split}
I_3
=& |g(1)-g(0)-g'(0)|\\
\leq&\int_0^1t|g''(1-t)|dt\\
\leq& C\int_0^1t[U+(1-t)\mu_j^{2s}w_j]^{2_s^*-3-\epsilon_j}\mu_j^{4s}w_j^2 dt\\
\leq& C \mu_j^{2s}|w_j||v_j-U|\int_0^1t U^{2_s^*-3-\epsilon_j} dt\\
\leq& C \mu_j^{2s}|w_j||v_j-U| U^{2_s^*-3-\epsilon_j}.
 \end{split}
 \end{equation}
Thus,  by (\ref{Lo9-1})--(\ref{Lo9+3}) and  Remark \ref{REM1}, we get
\begin{equation}\label{Lo12}
   \begin{split}
|F(w_j)|\leq C\left[U^{2^*_s-1-\epsilon_j} (|\ln U|+1)+|w_j||v_j-U| U^{2_s^*-3-\epsilon_j}\right].
 \end{split}
 \end{equation}
So, by dominated convergence we obtain 
\begin{equation}\label{Lo10}
   \begin{split}
\|F(w_j)\|_q\leq& C\left[\|U^{2^*_s-1-\epsilon_j} (|\ln U|+1)\|_q+\|w_j|v_j-U| U^{2_s^*-3-\epsilon_j}\|_q\right]\\
\leq& C\left[\|U^{2^*_s-1-\epsilon_j} (|\ln U|+1)\|_q+\|w_j\|_r\||v_j-U| U^{2_s^*-3-\epsilon_j}\|_{\frac{N}{2s}}\right]\\
\leq& C\left[1+o(1)\|w_j\|_r \right].
 \end{split}
 \end{equation}
  Thus, we get
\begin{equation}\label{Lo11}
   \begin{split}
\frac{1}{M_j}\|F(w_j)\|_q\leq  C\left[o(1)+o(1)\left\|\frac{z_j}{M_j}\right\|_r \right].
 \end{split}
 \end{equation}
Again by dominated convergence we get 
\begin{equation}\label{Lo12}
   \begin{split}
\|F(w_j)\|_r\leq& C\left[\|U^{2^*_s-1-\epsilon_j} (|\ln U|+1)\|_r+\|w_j|v_j-U| U^{2_s^*-3-\epsilon_j}\|_r\right]\\
\leq& C\left[1+o(1)\|w_j\|_r \right],
 \end{split}
 \end{equation}
which yields
\begin{equation}\label{Lo13}
   \begin{split}
\frac{1}{M_j}\|F(w_j)\|_r\leq  C\left[o(1)+o(1)\left\|\frac{z_j}{M_j}\right\|_r \right].
 \end{split}
 \end{equation}
By Lemma \ref{Lm21}, for $ \frac{N}{N-2s}<q<\frac{N}{4s}$ with $\frac{1}{q}-\frac{2s}{N}=\frac{1}{r}$ (Note that $N> 6s$ is needed),  we get
\begin{equation}\label{Lo14}
   \begin{split}
\left\|\frac{z_j}{M_j}\right\|_{\mathcal{W}^{2s,r}} \leq&  \frac{C}{M_j}\Big[\|V_j v_j\|_q+\|V_j v_j\|_r+\|F(w_j)\|_q+\|F(w_j)\|_r  \Big]\\
 \leq&   \frac{C}{M_j}\left[1+\|F(w_j)\|_q+\|F(w_j)\|_r  \right]\\
  \leq&   C\left[o(1)+o(1)\left\|\frac{z_j}{M_j}\right\|_r \right].
 \end{split}
 \end{equation}
Thus, we have
\begin{equation}\label{Lo15}
   \begin{split}
\left\|\frac{z_j}{M_j}\right\|_{\mathcal{W}^{2s,r}}=o(1).
 \end{split}
 \end{equation}
By Proposition \ref{lm21}, we get
\begin{equation}\label{Lo15}
   \begin{split}
\left\|\frac{z_j}{M_j}\right\|_{t}=o(1), \ \ r\leq t\leq \frac{Nr}{N-2sr}.
 \end{split}
 \end{equation}
By choosing $r$ close to $\frac{N}{2s}$,  $t$ can be arbitrarily large.
Besides, from  (\ref{Lo12}),  we have
\begin{equation}\label{Lo16-}
   \begin{split}
\left|L\left(\frac{z_j}{M_j}\right)\right|\leq&   C\frac{1}{M_j}\left[U+U^{2^*_s-1-\epsilon_j} (|\ln U|+1)+|w_j||v_j-U| U^{2_s^*-2-\epsilon_j}\right]\\
\leq&  o(1)\left[U+U^{2^*_s-1-\epsilon_j} (|\ln U|+1)+  U^{2_s^*-2-\epsilon_j}\sum_{i=1}^{N+1}|e_i|\right]+o(1)\left|\frac{z_j}{M_j}\right|,
 \end{split}
 \end{equation}
which yields that $L\left(\frac{z_j}{M_j}\right) \in L^t(\mathbb{R}^N)$. Thus, from (\ref{Lo15}),  we get
\begin{equation}\label{Lo17}
   \begin{split}
\left\|\frac{z_j}{M_j}\right\|_{\mathcal{W}^{2s,t}}=o(1).
 \end{split}
 \end{equation}
By Lemma \ref{lm21}, we have
\begin{equation}\label{Lo18}
   \begin{split}
\left\|\frac{z_j}{M_j}\right\|_{C^{0,\mu}}=o(1),
 \end{split}
 \end{equation}
for some $0<\mu<1$.
In particular we have $
\left\|\frac{z_j}{M_j}\right\|_{\infty}\leq C
 $ and from  (\ref{Lo16-}),  $
\left\|(-\Delta)^s\frac{z_j}{M_j}\right\|_{\infty}\leq C
 $. From Lemma 4.4 in \cite{CASI14}, $\left\|\frac{z_j}{M_j}\right\|_{C^{2,\beta}}\leq C
 $.
So,  $\frac{z_j}{M_j}\rightarrow 0$ in $L^\infty(\mathbb{R}^N)$  and $C_{loc}^1(\mathbb{R}^N)$ as $j\rightarrow \infty$. Since $v_j(0)=U(0)=1$ and both they achieve their maximum at 0, we get
\begin{equation}\label{Lo19}
   \begin{split}
 0=w_j(0)=M_j\left(\sum_{i=1}^{N+1}b_i e_i(0)+o(1)\right),\\
 0=\nabla w_j(0)=M_j\left(\sum_{i=1}^{N+1}b_i \nabla e_i(0)+o(1)\right).
 \end{split}
 \end{equation}
By direct calculations, it follows $(b_1, b_2,\cdots, b_{N+1})=0$, which is a contradiction.

\noindent Similarly, we can prove the remaining part of the Lemma.
\end{proof}

\begin{lemma} Assume $N>6s$. Then $z_j\rightarrow z$ in $C_{loc}^{1}(\mathbb{R}^N)$, where $z$ is radial and satisfies
\begin{equation}\label{Lo19-}
   \begin{split}
 (-\Delta)^s z-(2_s^{*}-1) U^{2_s^*-2} z+V(x_0)U- \widetilde{C}(N,s)U^{2^*_s-1} \ln U=0\ \ \text{in}\ \ \mathbb{R}^N.
 \end{split}
 \end{equation}
\end{lemma}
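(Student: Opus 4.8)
The plan is to pass to the limit in the decomposition $w_j=\sum_{i=1}^{N+1}a_{ij}e_i+z_j$ using the uniform bounds of Lemma~\ref{Lm22}, and then to upgrade the resulting convergence by elliptic regularity. First, since $M_j=\max_i|a_{ij}|$ is bounded, I would extract a subsequence along which $a_{ij}\to a_i$; since $\|z_j\|_{\mathcal{W}^{2s,r}}$ is bounded, along a further subsequence $z_j\rightharpoonup z$ in $\mathcal{W}^{2s,r}$, and by Proposition~\ref{lm21} also $z_j\to z$ in $C^{0,\mu}_{loc}$ and in $L^t_{loc}$ for the admissible range of $t$. Since each $e_i$ solves the linearized equation $(-\Delta)^s\phi=(2_s^*-1)U^{2_s^*-2}\phi$, i.e. $Le_i=0$ (Lemma~\ref{JMY13}), applying $L$ to the decomposition and using \eqref{res1+} gives the clean identity
\[
Lz_j=F(w_j)-V(x_j+\mu_j x)\,v_j \qquad\text{in }\mathbb{R}^N .
\]

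The core step is to identify the limit of the right-hand side. Using the pointwise bound \eqref{Lo12} on $F(w_j)$, the comparison $v_j\le CU$ of Lemma~\ref{comp}, the identity $\mu_j^{2s}w_j=v_j-U\to0$, and the asymptotics $\epsilon_j=\widetilde{C}_{N,s}\mu_j^{2s}+o(\mu_j^{2s})$ coming from \eqref{asy6} and Remark~\ref{REM1}, one checks by dominated convergence that $\{F(w_j)\}$ is bounded in the relevant $L^t$ spaces and converges a.e. (hence in $L^t_{loc}$) to a fixed multiple of $U^{2_s^*-1}\ln U$; together with $V(x_j+\mu_j x)v_j\to V(x_0)U$ this shows that $Lz_j$ converges to the right-hand side of \eqref{Lo19-}. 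Alternatively one may invoke Proposition~\ref{PR2}: since $w_j\to w$ in $L^\infty$ and $a_{ij}\to a_i$, one gets $z_j\to z:=w-\sum_i a_ie_i$ in $L^\infty$, and $Lz=Lw$ forces $z$ to solve the same limiting equation as $w$. In either case $z$ is a bounded solution of \eqref{Lo19-}, and the constraint $z\in Y_q$ passes to the limit from $\int z_j v\,dx=0$, $v\in X$.

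To obtain convergence in $C^1_{loc}$ I would argue as in the proof of Lemma~\ref{Lm21}: writing $z_j=I\ast[(2_s^*-1)U^{2_s^*-2}z_j]+I\ast(Lz_j)$ with $I=(-\Delta)^{-s}$, the Hardy--Littlewood--Sobolev inequality and the local compactness coming from the splitting $\mathbb{R}^N=B_R\cup B_R^c$ (on $B_R^c$ the tail $\int_{|x|\ge R}U^{2N/(N-2s)}$ is small) show that $\{z_j\}$ is Cauchy in $L^r(\mathbb{R}^N)$; then \eqref{Lo16-} gives that $(-\Delta)^s z_j$ is bounded in $L^t_{loc}$, whence Proposition~\ref{lm21} and the Schauder-type estimate Lemma~4.4 in \cite{CASI14} yield uniform $C^{2,\beta}_{loc}$ bounds, and Arzel\`a--Ascoli upgrades to $z_j\to z$ in $C^1_{loc}$; passing to the limit in the equation confirms \eqref{Lo19-}. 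Radiality of $z$ is inherited from the radial symmetry of the $v_j$'s (hence of the $w_j$'s and $z_j$'s), which holds when $V$ is radial by symmetric rearrangement.

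The main obstacle will be the second step: making the limit of $F(w_j)$ rigorous and uniform in $j$. Here two small parameters compete — $\mu_j^{2s}$ and $\epsilon_j$ — entering through the factors $U^{\pm\epsilon_j}$ and through the Taylor remainder $I_3$ in \eqref{Lo9+3}, so one must control $U^{2_s^*-2-\epsilon_j}(|\ln U|+1)$ and $w_j|v_j-U|U^{2_s^*-3-\epsilon_j}$ in $L^t$ uniformly; the hypothesis $N>6s$ is exactly what keeps the exponents $q$ and $r$ (with $\tfrac1q-\tfrac{2s}{N}=\tfrac1r$) in the integrability range required by Lemma~\ref{Lm21}.
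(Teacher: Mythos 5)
Your main line is the paper's: use the bounds of Lemma \ref{Lm22} to extract a limit $z$ with $z_j\rightharpoonup z$ in $\mathcal{W}^{2s,r}$ and locally uniform convergence, exploit $Le_i=0$ to write $Lz_j=F(w_j)-V(x_j+\mu_j x)v_j$, identify the limit of the right-hand side through the $I_1,I_2,I_3$ estimates together with $\epsilon_j=\widetilde{C}_{N,s}\mu_j^{2s}+o(\mu_j^{2s})$, and bootstrap regularity. The genuine gap is the radiality of $z$. In this section $V$ is \emph{not} assumed radial and the concentration point $x_0$ need not be the origin: $v_j$ solves the rescaled equation with the non-radial coefficient $V(x_j+\mu_j x)$, so no rearrangement argument applies and the $v_j$, hence the $w_j$ and $z_j$, are not radial; indeed the lemma is used precisely to localize the blow-up for general $V$ in Theorems \ref{th6} and \ref{th7}, so radial symmetry of the approximating sequence cannot be an input. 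The paper gets radiality from the limit problem itself: equation (\ref{Lo19-}) is invariant under rotations $T$, hence $z(T\cdot)-z$ is a bounded solution of the homogeneous linearized equation and lies in $X$ by the nondegeneracy Lemma \ref{JMY13}; since $X$, and therefore $Y_r$, is rotation invariant and $z\in Y_r$, one concludes $z(T\cdot)-z\in X\cap Y_r=\{0\}$, i.e. $z$ is radial. Your "inherited from the radial symmetry of the $v_j$'s" step fails, and some argument of this rigidity type is needed.

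Two further points. First, the lemma asserts convergence of the whole sequence; the paper obtains this from uniqueness of the solution of (\ref{Lo19-}) in $Y_r$ (two subsequential limits differ by an element of $X\cap Y_r=\{0\}$), which your subsequence extraction should be complemented by. Second, the proposed shortcut via Proposition \ref{PR2} is circular in the paper's architecture: Proposition \ref{PR2} is proved \emph{after}, and by means of, this lemma (the final lemma of the section deduces $w_j\to w$ in $L^\infty$ from $z_j\to z$), so it cannot be invoked here. A smaller technical slip: with $\frac1q-\frac{2s}{N}=\frac1r$ and $q<\frac{N}{4s}$ one has $r<\frac{N}{2s}$, so $\mathcal{W}^{2s,r}$ does not embed into $C^{0,\mu}$ and the locally uniform convergence cannot be read off Proposition \ref{lm21} directly from the $\mathcal{W}^{2s,r}$ bound; it requires the bootstrap (higher $L^t$ integrability from the equation, then the H\"older embedding and the estimate of Lemma 4.4 in \cite{CASI14}) that you describe only afterwards.
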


\begin{proof}
By  Lemma \ref{Lm22}, there exists a subsequence $\{z_{jk}\}$ such that
$z_{jk}\rightharpoonup z$ in $\mathcal{W}^{2s,r}$ and $z_{jk}\rightarrow z$ in $C_{loc}^{1}(\mathbb{R}^N)$, see also \cite{CASI14}.  Since $\|z_j\|_\infty$ is bounded, from (\ref{Lo9+2}) and  (\ref{Lo9+3}),   we get
\begin{equation}\label{Lo17}\frac{I_2+I_3}{\mu_j^{2s}}=o(1)  \end{equation} and
\begin{equation}\label{Lo18}   \begin{split} \frac{1}{\mu_j^{2s}}(U^{2^*_s-1}-U^{2^*_s-1-\epsilon_j})= &\frac{\epsilon_j\ln U+o(\epsilon_j)}{\mu_j^{2s}}U^{2^*_s-1}\\ =& \frac{ \mu_j^{2s}\widetilde{C}(N,s)\ln U+o(\mu_j^{2s})}{\mu_j^{2s}}U^{2^*_s-1}\\ =& \widetilde{C}(N,s)U^{2^*_s-1} \ln U +o(1).  \end{split}\end{equation} Thus, $z$ satisfies (\ref{Lo19-}).

\noindent Since $z_{jk}\in Y_r$, we get $z\in Y_r$. Thus, (\ref{Lo19-}) has at most one such solution, and $z_{j}\rightharpoonup z$ in $\mathcal{W}^{2s,r}$.
Moreover, since $(-\Delta)^s$ is invariant with respect to
the action of the orthogonal group $O(n)$ on $\mathbb{R}^N$ (see \cite{DDCA19}), if $T$ denotes a rotation in $\mathbb{R}^N$, since (\ref{Lo19-}) is invariant under  rotation,  then $z(Tx)-z(x)\in X$. Consequently, $z(Tx)=z(x)$. This proves that $z$ is radial.

\end{proof}
\begin{lemma} Assume $N>6s$. Then $ |a_{ij}| \rightarrow 0, $ $i=1,2,\cdots, N$ and $a_{(N+1)j}\rightarrow -\frac{2}{N-2s}z(0)$ as $j\rightarrow\infty$.
\end{lemma}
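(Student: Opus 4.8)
The plan is to extract the coefficients $a_{ij}$ from the behaviour of $w_j$ at the origin. Since $v_j$ and $U$ both attain their maximum at $x=0$ with common value $v_j(0)=U(0)=1$, and $v_j\in C^{2,\beta}(\mathbb{R}^N)$, we have $v_j(0)-U(0)=0$ and $\nabla v_j(0)-\nabla U(0)=0$; recalling $v_j=U+\mu_j^{2s}w_j$, this yields $w_j(0)=0$ and $\nabla w_j(0)=0$. Substituting the decomposition $w_j=\sum_{i=1}^{N+1}a_{ij}e_i+z_j$ into these two identities gives
\begin{equation*}
\sum_{i=1}^{N+1}a_{ij}\,e_i(0)+z_j(0)=0,\qquad \sum_{i=1}^{N+1}a_{ij}\,\nabla e_i(0)+\nabla z_j(0)=0 .
\end{equation*}

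Next I would compute $e_i(0)$ and $\nabla e_i(0)$ from the explicit profile $U(x)=(1+|x|^2/\lambda^2)^{(2s-N)/2}$ in (\ref{att}). Since $U$ is radial, $\nabla U(0)=0$, so $e_i(0)=\partial_iU(0)=0$ for $i=1,\dots,N$, while $e_{N+1}(0)=\tfrac{N-2s}{2}U(0)=\tfrac{N-2s}{2}$. Differentiating once more, $\partial_k\partial_iU(0)=\tfrac{2s-N}{\lambda^2}\delta_{ki}$, hence $\nabla e_i(0)=\tfrac{2s-N}{\lambda^2}(\delta_{1i},\dots,\delta_{Ni})$ for $i=1,\dots,N$; and writing $e_{N+1}=\tfrac{N-2s}{2}U+x\cdot\nabla U$ and using $\nabla U(0)=0$ together with $\sum_k x_k\partial_m\partial_kU|_{x=0}=0$ gives $\nabla e_{N+1}(0)=0$. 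With these values the first identity collapses to $\tfrac{N-2s}{2}a_{(N+1)j}+z_j(0)=0$ and the second to $\tfrac{2s-N}{\lambda^2}(a_{1j},\dots,a_{Nj})+\nabla z_j(0)=0$.

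Finally I would let $j\to\infty$. By the preceding lemma $z_j\to z$ in $C^1_{loc}(\mathbb{R}^N)$, so $z_j(0)\to z(0)$ and $\nabla z_j(0)\to\nabla z(0)$, and since $z$ is radial one has $\nabla z(0)=0$. Therefore $a_{(N+1)j}=-\tfrac{2}{N-2s}z_j(0)\to-\tfrac{2}{N-2s}z(0)$ and $(a_{1j},\dots,a_{Nj})=\tfrac{\lambda^2}{N-2s}\nabla z_j(0)\to 0$, i.e.\ $|a_{ij}|\to0$ for $i=1,\dots,N$. As $z$ is the unique solution of (\ref{Lo19-}) lying in $Y_r$, the limit does not depend on the subsequence extracted in the earlier lemmas, so the convergence holds along the whole sequence.

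The only delicate point is the bookkeeping at the origin: one must verify carefully that $e_i(0)=0$ for $i\le N$ and that $\nabla e_{N+1}(0)=0$, for it is precisely these vanishings that decouple the two identities so that each isolates a single block of coefficients, and then one must invoke the radial symmetry of $z$ — already established — to kill $\nabla z(0)$. Everything else follows from results proved above, namely the uniform boundedness of $M_j$ and $\|z_j\|_{\mathcal{W}^{2s,r}}$ and the $C^1_{loc}$ convergence $z_j\to z$.
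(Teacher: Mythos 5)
Your proposal is correct and follows essentially the same route as the paper: evaluate $w_j=\sum_i a_{ij}e_i+z_j$ and its gradient at the origin (where $w_j(0)=0$, $\nabla w_j(0)=0$ since $v_j$ and $U$ share their maximum there), use $e_i(0)=0$ for $i\le N$, $e_{N+1}(0)=\tfrac{N-2s}{2}$, $\nabla e_{N+1}(0)=0$, and conclude via $z_j\to z$ in $C^1_{loc}$ with $\nabla z(0)=0$ by radial symmetry. Your version is in fact more explicit than the paper's (which leaves the values of $e_i(0)$, $\nabla e_i(0)$ and the subsequence issue implicit), but the argument is the same.
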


\begin{proof}
Note that \begin{equation}\label{Lo16}
   \begin{split}
 0=  \sum_{i=1}^{N+1}a_{ij} e_i(0)+z_j(0) ,\\
 0= \sum_{i=1}^{N+1}a_{ij} \nabla e_i(0)+\nabla z_j(0),
 \end{split}
 \end{equation}
which gives
 \begin{equation}\label{Lo16}
   \begin{split}
 0= \frac{N-2s}{2}a_{(N+1)j}+z_j(0) ,\\
 0= \sum_{i=1}^{N}b_i \nabla e_i(0)+\nabla z_j(0).
 \end{split}
 \end{equation}
Since $\nabla z(0)=0$, we get the result.

\end{proof}

\begin{lemma} Assume $N>6s$. Then $ w_j \rightarrow w$ in $L^\infty(\mathbb{R}^N)$ as $j\rightarrow \infty$, where
$$w=z- \frac{2}{N-2s}z(0)\left(\frac{N-2s}{2}U+x\cdot \nabla U\right). $$
\end{lemma}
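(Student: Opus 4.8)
The plan is to pass to the limit in the decomposition
$w_j=\sum_{i=1}^{N}a_{ij}\tfrac{\partial U}{\partial x_i}+a_{(N+1)j}\bigl(\tfrac{N-2s}{2}U+x\cdot\nabla U\bigr)+z_j$,
using the convergences already proved in the preceding lemmas: $|a_{ij}|\to 0$ for $i=1,\dots,N$, $a_{(N+1)j}\to-\tfrac{2}{N-2s}z(0)$, and $z_j\to z$ in $C^{1}_{loc}(\mathbb{R}^N)$ with $z$ the radial solution of \eqref{Lo19-}. Granting these, the claimed identity $w=z-\tfrac{2}{N-2s}z(0)\bigl(\tfrac{N-2s}{2}U+x\cdot\nabla U\bigr)$ follows by substituting the limit coefficients $a_1=\dots=a_N=0$, $a_{N+1}=-\tfrac{2}{N-2s}z(0)$; the only real point is to upgrade the convergence $w_j\to w$ from local to uniform on all of $\mathbb{R}^N$.

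First I would record a uniform decay estimate at infinity for $\{z_j\}$. Since $M_j$ is bounded by Lemma \ref{Lm22}, one may repeat the estimates of that lemma working directly with $w_j,z_j$ (rather than with $w_j/M_j,z_j/M_j$): one gets $\|F(w_j)\|_q+\|F(w_j)\|_r\le C$, hence $\|z_j\|_{\mathcal{W}^{2s,r}}\le C$ by Lemma \ref{Lm21}, and then by the bootstrap contained in Proposition \ref{lm21} together with the pointwise bound on $Lz_j$ (the analogue of \eqref{Lo16-}) and Lemma 4.4 in \cite{CASI14}, the sequence $\{z_j\}$ is bounded in $C^{2,\beta}(\mathbb{R}^N)$ and, for $t$ large, in $\mathcal{W}^{2s,t}(\mathbb{R}^N)\hookrightarrow C^{0,\mu}(\mathbb{R}^N)\cap L^t(\mathbb{R}^N)$. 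A sequence bounded simultaneously in $C^{0,\mu}(\mathbb{R}^N)$ and in $L^t(\mathbb{R}^N)$ vanishes at infinity uniformly in $j$: otherwise there are $\delta>0$ and points $x_j$ with $|x_j|\to\infty$ and $|z_j(x_j)|\ge\delta$, whence $|z_j|\ge\delta/2$ on balls of a fixed radius around $x_j$, contradicting the $L^t$ bound. The same argument (using that $z\in C^{0,\mu}\cap L^t$ by Fatou) shows $z$ vanishes at infinity, while $U$, $\partial U/\partial x_i$ and $x\cdot\nabla U$ are bounded and vanish at infinity by \eqref{att}.

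Now fix $\eta>0$. By the previous step there is $R>0$ such that $|z_j(x)|$, $|z(x)|$, $|\partial U/\partial x_i(x)|$ and $\bigl|\tfrac{N-2s}{2}U(x)+x\cdot\nabla U(x)\bigr|$ are all $\le\eta$ whenever $|x|\ge R$, for all $j$; since the coefficients $a_{ij}$ are bounded, this gives $|w_j(x)-w(x)|\le C\eta$ for $|x|\ge R$ with $C$ independent of $j$. On the compact set $\overline{B_R}$, the uniform convergence $z_j\to z$ and $a_{ij}\to a_i$ yield $\sup_{\overline{B_R}}|w_j-w|\to 0$. Letting $j\to\infty$ and then $\eta\to 0^+$ gives $\|w_j-w\|_{L^\infty(\mathbb{R}^N)}\to 0$, and the explicit formula for $w$ follows from the decomposition and the known limits of $a_{ij}$ and $z_j$. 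The main obstacle is precisely the uniform control of the tails of $\{z_j\}$, which rests on combining the $C^{2,\beta}$ and $L^t$ bounds extracted from the proof of Lemma \ref{Lm22}; away from this, everything reduces to bookkeeping of limits already available.
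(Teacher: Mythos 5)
Your reduction to showing that $z_j\to z$ uniformly on $\mathbb{R}^N$ is the right one (the coefficient part is harmless since the $e_i$ are bounded and the limits of $a_{ij}$ are known), but the step you use to control the tails is not correct. You claim that a sequence bounded simultaneously in $C^{0,\mu}(\mathbb{R}^N)$ and in $L^t(\mathbb{R}^N)$ vanishes at infinity \emph{uniformly in $j$}, arguing that $|z_j(x_j)|\ge\delta$ at points $|x_j|\to\infty$ would force $|z_j|\ge\delta/2$ on a ball of fixed radius and thus contradict the $L^t$ bound. There is no contradiction: one such ball only contributes a fixed, $j$-independent amount to $\|z_j\|_{L^t}$, which is perfectly compatible with a uniform bound. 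A translated bump, $z_j(x)=\varphi(x-je_1)$ with $\varphi$ fixed, is bounded in $C^{0,\mu}\cap L^t$ yet does not decay at infinity uniformly in $j$. What your argument would need is \emph{smallness}, not boundedness, of some global norm of $z_j-z$ (e.g. strong convergence in $L^t$ or $\mathcal{W}^{2s,t}$); with mere boundedness plus $C^1_{loc}$ convergence, mass of $z_j-z$ can escape to infinity without violating anything you have established. Note also that you cannot recover uniform decay of $z_j$ from the pointwise bound $v_j\le CU$, since $w_j=\mu_j^{-2s}(v_j-U)$ involves division by $\mu_j^{2s}\to0$.

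The paper closes exactly this gap by a different mechanism: since $z_j-z\in Y_r$, one applies the coercivity estimate of Lemma \ref{Lm21} to $L(z_j-z)$, showing as in the proof of Lemma \ref{Lm22} that $\|L(z_j-z)\|_q+\|L(z_j-z)\|_r\to0$, hence $\|z_j-z\|_{\mathcal{W}^{2s,r}}\to0$; bootstrapping as in \eqref{Lo16-}--\eqref{Lo18} to $\mathcal{W}^{2s,t}$ with $t$ large and using the embedding of Proposition \ref{lm21} into $C^{0,\mu}$ then gives $\|z_j-z\|_\infty\to0$ directly, with no separate tail analysis. So your outline can be repaired, but only by replacing the "bounded in $C^{0,\mu}\cap L^t$ implies uniform vanishing at infinity" step with an actual smallness estimate for $z_j-z$ in a norm controlling $L^\infty$, which is precisely the $L(z_j-z)$ argument of the paper.
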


\begin{proof} It sufficient to prove  $z_j\rightarrow z$ in $L^\infty(\mathbb{R}^N)$ as $j\rightarrow \infty$.
In fact, by consider $L(z_j-z)$, the proof is analogous to the proof of Lemma  \ref{Lm22}.

\end{proof}

\begin{theorem} \label{th6} Assume $N>6s$, $u_{\epsilon_j}$ is a ground state of (\ref{maineq0}) satisfying (\ref{fun00}) which has a maximum point $x_{\epsilon_j}$ satisfying $x_{\epsilon_j}\rightarrow x_0$ as $j \rightarrow \infty$. Then
 \begin{equation}\label{Lo30}
   \begin{split}
   S_{2^*_s-\epsilon_j} ^V=&    S + S^{-\frac{N-2s}{2s}}\mu_j^{2s}\int_{\mathbb{R}^n}\left[\frac{2}{2_s^*}\widetilde{C}_{N,s} U^{2_s^*}\ln U+  V(x_0)U^2 \right]dx\\
   &-\mu_j^{2s}\widetilde{C}_{N,s}\frac{2}{(2_s^*)^2}  S  \ln S^{\frac{N}{2s}} +o(\mu_j^{2s}). \end{split}
 \end{equation}

\end{theorem}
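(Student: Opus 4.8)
The plan is to turn the upper bound of Theorem~\ref{th5} into an identity by computing $S_{2_s^*-\epsilon_j}^V$ \emph{exactly} from the Rayleigh quotient realised by the ground state $u_j$, this time using the refined profile $v_j=U+\mu_j^{2s}w_j$, with $w_j\to w$ in $L^\infty(\mathbb{R}^N)$, established in Proposition~\ref{PR2} and the subsequent lemmas. Since $u_j$ solves (\ref{maineq0}) and attains (\ref{fun00}), we have $\|u_j\|_{s,V}^2=\|u_j\|_{2_s^*-\epsilon_j}^{2_s^*-\epsilon_j}$, hence $S_{2_s^*-\epsilon_j}^V=\|u_j\|_{s,V}^2/\|u_j\|_{2_s^*-\epsilon_j}^2=\|u_j\|_{2_s^*-\epsilon_j}^{\,2_s^*-2-\epsilon_j}$. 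Writing $u_j(x)=\mu_j^{-2s/(2_s^*-2-\epsilon_j)}v_j\big((x-x_j)/\mu_j\big)$, rescaling the integrals, and multiplying by $v_j$ the rescaled form of (\ref{maineq0}) solved by $v_j$, one obtains
\[A_j:=\int_{\mathbb{R}^N}v_j^{\,2_s^*-\epsilon_j}\,dx=[v_j]_s^2+\mu_j^{2s}\!\int_{\mathbb{R}^N}V(x_j+\mu_j x)\,v_j^2\,dx,\]
together with $\|u_j\|_{s,V}^2=\mu_j^{\,N-2s-2\alpha_j}A_j$ and $\|u_j\|_{2_s^*-\epsilon_j}^{2_s^*-\epsilon_j}=\mu_j^{\,N-\alpha_j(2_s^*-\epsilon_j)}A_j$, where $\alpha_j:=2s/(2_s^*-2-\epsilon_j)$. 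Consequently
\[S_{2_s^*-\epsilon_j}^V=\mu_j^{\,(N-2s)-\frac{2N}{2_s^*-\epsilon_j}}\,A_j^{\,\frac{2_s^*-2-\epsilon_j}{2_s^*-\epsilon_j}},\]
so the whole problem reduces to an expansion of $A_j$ in powers of $\mu_j^{2s}$.

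To expand $A_j$, Taylor expand $t\mapsto t^{2_s^*-\epsilon_j}$ about $U$ and pass to the limit by dominated convergence — splitting each integral into a large ball and its complement and dominating the integrands by fixed powers of $U$ via $v_j\le CU$ (Lemma~\ref{comp}) and the decay estimates of Lemmas~\ref{DEC0}--\ref{DEC} — to get $A_j=\int_{\mathbb{R}^N}U^{2_s^*-\epsilon_j}\,dx+(2_s^*-\epsilon_j)\mu_j^{2s}\int_{\mathbb{R}^N}U^{2_s^*-1-\epsilon_j}w_j\,dx+o(\mu_j^{2s})$. Using $U^{-\epsilon_j}=1-\epsilon_j\ln U+O(\epsilon_j^2(\ln U)^2)$, $\int U^{2_s^*}=S^{N/2s}$, $\int U^{2_s^*}(\ln U)^2<\infty$ and $\int U^{2_s^*-1-\epsilon_j}w_j\to\int U^{2_s^*-1}w$ (again dominated convergence, since $\|w_j\|_\infty\le C$), this becomes
\[A_j=S^{N/2s}-\epsilon_j\!\int_{\mathbb{R}^N}U^{2_s^*}\ln U\,dx+2_s^*\,\mu_j^{2s}\!\int_{\mathbb{R}^N}U^{2_s^*-1}w\,dx+o(\mu_j^{2s}).\]
Pairing $U$ with the limit equation for $w$ of Proposition~\ref{PR2} and integrating by parts (using $(-\Delta)^sU=U^{2_s^*-1}$) yields the linear relation $(2_s^*-2)\int_{\mathbb{R}^N}U^{2_s^*-1}w\,dx=V(x_0)\int_{\mathbb{R}^N}U^2\,dx+\widetilde{C}_{N,s}\int_{\mathbb{R}^N}U^{2_s^*}\ln U\,dx$. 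Substituting it and using $\epsilon_j=\widetilde{C}_{N,s}\mu_j^{2s}+o(\mu_j^{2s})$ (Proposition~\ref{asy1}) we arrive at
\[A_j=S^{N/2s}+\mu_j^{2s}\Big[\tfrac{2_s^*}{2_s^*-2}V(x_0)\!\int_{\mathbb{R}^N}U^2\,dx+\tfrac{2}{2_s^*-2}\,\widetilde{C}_{N,s}\!\int_{\mathbb{R}^N}U^{2_s^*}\ln U\,dx\Big]+o(\mu_j^{2s}).\]

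It remains to assemble. Insert the last display into $S_{2_s^*-\epsilon_j}^V=\mu_j^{(N-2s)-2N/(2_s^*-\epsilon_j)}A_j^{(2_s^*-2-\epsilon_j)/(2_s^*-\epsilon_j)}$, expand the exponents $(N-2s)-\tfrac{2N}{2_s^*-\epsilon_j}=O(\epsilon_j)$ and $\tfrac{2_s^*-2-\epsilon_j}{2_s^*-\epsilon_j}=\tfrac{2s}{N}-\tfrac{2}{(2_s^*)^2}\epsilon_j+O(\epsilon_j^2)$, use $(S^{N/2s})^{2s/N}=S$ and once more $\epsilon_j=\widetilde{C}_{N,s}\mu_j^{2s}+o(\mu_j^{2s})$, and collect the terms of order $\mu_j^{2s}$: the $V(x_0)\int U^2$ contribution recombines through $\tfrac{2s}{N}\cdot\tfrac{2_s^*}{2_s^*-2}=1$ into $S^{-(N-2s)/2s}\mu_j^{2s}V(x_0)\int U^2$, the $\int U^{2_s^*}\ln U$ contribution through $\tfrac{2s}{N}\cdot\tfrac{2}{2_s^*-2}=\tfrac{2}{2_s^*}$ into $S^{-(N-2s)/2s}\tfrac{2}{2_s^*}\widetilde{C}_{N,s}\mu_j^{2s}\int U^{2_s^*}\ln U$, while the correction $-\tfrac{2}{(2_s^*)^2}\epsilon_j$ in the exponent acting on $\ln S^{N/2s}$ produces $-\widetilde{C}_{N,s}\tfrac{2}{(2_s^*)^2}S\,\mu_j^{2s}\ln S^{N/2s}$. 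This is exactly (\ref{Lo30}); since its right-hand side coincides with the upper bound of Theorem~\ref{th5}, the computation also re-proves that estimate as an equality.

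The core difficulty lies entirely in the two middle steps. On one hand, one must justify the passages $w_j\to w$ inside the integrals defining $A_j$: because $w_j$ decays only in an $L^{2_s^*}$-type fashion, the integrals have to be split and $v_j\le CU$ combined with the pointwise decay of Lemmas~\ref{DEC0}--\ref{DEC} to produce integrable dominating functions — and it is precisely here that the hypothesis $N>6s$ enters, being what makes Proposition~\ref{PR2} (and hence the decomposition $v_j=U+\mu_j^{2s}w_j$ with $w_j$ convergent) available. On the other hand, one must keep accurate track of every sub-leading contribution created by the near-critical exponent $2_s^*-\epsilon_j$ — the deviation of $U^{2_s^*-\epsilon_j}$ from $U^{2_s^*}$, of $\tfrac{2_s^*-2-\epsilon_j}{2_s^*-\epsilon_j}$ from $\tfrac{2s}{N}$, and of $\mu_j^{2N/(2_s^*-\epsilon_j)}$ from $\mu_j^{N-2s}$ — so that, after multiplying and dividing, only the stated $\mu_j^{2s}$ terms survive.
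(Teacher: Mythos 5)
Your proposal is correct and follows essentially the same route as the paper: you expand $\int_{\mathbb{R}^N} v_j^{2_s^*-\epsilon_j}\,dx$ via the decomposition $v_j=U+\mu_j^{2s}w_j$ with $w_j\to w$ in $L^\infty$, discard the quadratic remainder using $v_j\le CU$, convert $\int U^{2_s^*-1}w$ through the limit equation for $w$ paired against $U$ (using $(-\Delta)^sU=U^{2_s^*-1}$), insert $\epsilon_j=\widetilde{C}_{N,s}\mu_j^{2s}+o(\mu_j^{2s})$, and perform the same final exponent expansion, which is exactly the paper's computation (the only cosmetic difference being that you keep the rescaling prefactor $\mu_j^{(N-2s)-2N/(2_s^*-\epsilon_j)}$ explicit, while the paper absorbs it directly into the identity $(S_{2_s^*-\epsilon_j}^V)^{\frac{2_s^*-\epsilon_j}{2_s^*-2-\epsilon_j}}=\int v_j^{2_s^*-\epsilon_j}\,dx$).
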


\begin{proof}

By the very definition of $  S_{2^*_s-\epsilon_j}^V$, we have
    \begin{equation}\label{Lo7}
   \begin{split}
   \left(S_{2^*_s-\epsilon_j}^V\right)^{\frac{2_s^*-\epsilon_j}{2_s^*-2-\epsilon_j}}=&  \int_{\mathbb{R}^n}v_j^{2_s^*-\epsilon_j}dx \\
   =& \int_{\mathbb{R}^n}\left(U+\mu_j^{2s} w_j\right)^{2_s^*-\epsilon_j}dx \\
   =& \int_{\mathbb{R}^n}\Big(U^{2_s^*-\epsilon_j}+(2_s^*-\epsilon_j)U^{2_s^*-1-\epsilon_j} \mu_j^{2s}w_j
\\&+\frac{1}{2} (2_s^*-\epsilon_j)(2_s^*-1-\epsilon_j) (U+t\mu_j^{2s} w_j)^{2_s^*-2-\epsilon_j}\mu_j^{4s} w_j^2\Big)dx,\ \ t\in (0,1).
   \end{split}
 \end{equation}
Since $v_j\leq CU$ and $v_j=U+\mu_j^{2s}w_j$, then by Lemma \ref{conver2}, 
    \begin{equation}\label{Lo8}
   \begin{split}
   &\int_{\mathbb{R}^n}  (U+t\mu_j^{2s} w_j)^{2_s^*-2-\epsilon_j}\mu_j^{4s} w_j^2dx \\ &\leq C\|w_j\|_\infty \mu_j^{2s}\int_{\mathbb{R}^n}  U^{2_s^*-2-\epsilon_j}|v_j-U| dx\\
   &\leq  C\|w_j\|_\infty \mu_j^{2s} \left(\int_{\mathbb{R}^n}  U^{\frac{2_s^*(2_s^*-2-\epsilon_j)}{2_s^*-1}}  dx\right)^{\frac{2_s^*-1}{2_s^*}}
   \left(\int_{\mathbb{R}^n}   |v_j-U|^{2_s^*} dx\right)^{\frac{1}{2_s^*}}\\
  & =o(\mu_j^{2s}).
   \end{split}
 \end{equation}

By (\ref{Lo7}), we get
    \begin{equation}\label{Lo9}
   \begin{split}
   \left(S_{2^*_s-\epsilon_j}^V\right)^{\frac{2_s^*-\epsilon_j}{2_s^*-2-\epsilon_j}}= &
  \int_{\mathbb{R}^n}\Big[U^{2_s^*-\epsilon_j}+(2_s^*-\epsilon)U^{2_s^*-1-\epsilon_j} \mu_j^{2s}w_j\Big]dx+o(\mu_j^{2s})\\
  =& \int_{\mathbb{R}^n}\Big[U^{2_s^*}-\epsilon_j U^{2_s^*}\ln U+(2_s^*-\epsilon_j)U^{2_s^*-1-\epsilon_j} \mu_j^{2s}w_j\Big]dx+o(\epsilon_j)+o(\mu_j^{2s})\\
  =& \int_{\mathbb{R}^n}\Big[U^{2_s^*}-\epsilon_j U^{2_s^*}\ln U+2_s^*U^{2_s^*-1} \mu_j^{2s}w_j\Big]dx+o(\epsilon_j)+o(\mu_j^{2s})\\
  =&S^{\frac{N}{2s}}+\mu_j^{2s}\int_{\mathbb{R}^n}\left[-\widetilde{C}_{N,s} U^{2_s^*}\ln U+2_s^*U^{2_s^*-1}  w\right]dx+o(\mu_j^{2s})
  \end{split}
 \end{equation}

\noindent By
(\ref{res1+1}), we get
\begin{equation}\label{Lo09}
   \begin{split}
    \int_{\mathbb{R}^n}   U^{2_s^*-1} w dx =&\int_{\mathbb{R}^n} (-\Delta)^s U w dx\\
     =& \int_{\mathbb{R}^n} (-\Delta)^s w Udx \\
       =&\int_{\mathbb{R}^n}[(2_s^{*}-1) U^{2_s^*-2}w -V(x_0)U- \widetilde{C}_{N,s} U^{2^*_s-1}\ln U] U dx.
  \end{split}
 \end{equation}
Thus,
\begin{equation}\label{Lo09}
   \begin{split}
   (2_s^{*}-2)\int_{\mathbb{R}^n}   U^{2_s^*-1} w dx=\int_{\mathbb{R}^n}[V(x_0)U+ \widetilde{C}_{N,s}U^{2^*_s-1}\ln U] U dx.
  \end{split}
 \end{equation}
So,
    \begin{multline}\label{Lo9}
  \left( S_{2^*_s-\epsilon_j}^V\right)^{\frac{2_s^*-\epsilon_j}{2_s^*-2-\epsilon_j}}=   S^{\frac{N}{2s}}\\+\mu_j^{2s}\int_{\mathbb{R}^n}\left[-\widetilde{C}_{N,s} U^{2_s^*}\ln U+\frac{2_s^*}{2_s^*-2} \left(V(x_0)U^2+ \widetilde{C}_{N,s}U \ln U\right)\right]dx+o(\mu_j^{2s})\\
   =    S^{\frac{N}{2s}}+\mu_j^{2s}\int_{\mathbb{R}^n}\left[\frac{2}{2_s^*-2}\widetilde{C}_{N,s} U^{2_s^*}\ln U+\frac{2_s^*}{2_s^*-2}  V(x_0)U^2 \right]dx+o(\mu_j^{2s})\ .
 \end{multline}
Thus, we have
    \begin{equation}\label{Lo9}
   \begin{split}
   S_{2^*_s-\epsilon_j}^V =&    S +\frac{2s}{N}S^{-\frac{N-2s}{2s}}\mu_j^{2s}\int_{\mathbb{R}^n}\left[\frac{2}{2_s^*-2}\widetilde{C}_{N,s} U^{2_s^*}\ln U+\frac{2_s^*}{2_s^*-2}  V(x_0)U^2 \right]dx\\
   &-\epsilon_j\frac{2}{(2_s^*)^2}  S  \ln S^{\frac{N}{2s}} +o(\mu_j^{2s}), \end{split}
 \end{equation}
which yields 
   \begin{equation}\label{Lo29}
   \begin{split}
   S_{2^*_s-\epsilon_j}^V =&    S + S^{-\frac{N-2s}{2s}}\mu_j^{2s}\int_{\mathbb{R}^n}\left[\frac{2}{2_s^*}\widetilde{C}_{N,s} U^{2_s^*}\ln U+  V(x_0)U^2 \right]dx\\
   &-\mu_j^{2s}\widetilde{C}_{N,s}\frac{2}{(2_s^*)^2}  S  \ln S^{\frac{N}{2s}} +o(\mu_j^{2s}). \end{split}
 \end{equation}
The proof is complete.

\end{proof}

\begin{theorem}  \label{th7} Assume $(V_1),$ $(V_2)$ with  $\inf\limits_{x\in \mathbb{R}^N} V(x)<\sup\limits_{x\in \mathbb{R}^N}  V(x)$,   $N>6s$ and let $u_\epsilon$ be the ground state of (\ref{maineq0}) which has a maximum point at $x_\epsilon$. Then, up to a subsequence, $V(x_{\epsilon_j})\rightarrow \min_{x\in \mathbb{R^N}} V(x)$ as $\epsilon_j\rightarrow 0^+$.

\end{theorem}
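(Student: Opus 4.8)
The plan is to pin down the concentration value of $V$ by squeezing $S_{2^*_s-\epsilon_j}^V$ between the exact two–term expansion of Theorem \ref{th6} and the upper bound of Theorem \ref{th5}, the latter obtained with a bubble centred at a global minimum of $V$. Observe first that, since $\inf_{\mathbb{R}^N}V<V_\infty=\liminf_{|x|\to\infty}V(x)$, the infimum of $V$ is attained, say at $\hat{x}_0$, and that by Remark \ref{REM1} we have $\epsilon_j=O(\mu_j^{2s})$, so that, after passing to a subsequence, $\epsilon_j/\mu_j^{2s}\to\widetilde{C}\in[0,+\infty)$. The number $\widetilde{C}$ depends only on the sequence $(\epsilon_j,\mu_j)$, not on where test bubbles are centred; this is what makes the comparison below work.

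Suppose first that $\{x_{\epsilon_j}\}$ is bounded, so that $x_{\epsilon_j}\to x_0$ along a subsequence. Theorem \ref{th6} gives
\begin{equation*}
S_{2^*_s-\epsilon_j}^V=S+S^{-\frac{N-2s}{2s}}\mu_j^{2s}\int_{\mathbb{R}^N}\Big[\tfrac{2}{2^*_s}\widetilde{C}\,U^{2^*_s}\ln U+V(x_0)U^2\Big]dx-\mu_j^{2s}\widetilde{C}\,\tfrac{2}{(2^*_s)^2}S\ln S^{\frac{N}{2s}}+o(\mu_j^{2s}),
\end{equation*}
whereas Theorem \ref{th5}, applied with the bubble $U\big((\cdot-\hat{x}_0)/\mu_j\big)$, gives
\begin{equation*}
S_{2^*_s-\epsilon_j}^V\le S+\mu_j^{2s}\Big\{S^{\frac{2s-N}{2s}}V(\hat{x}_0)\int_{\mathbb{R}^N}U^2dy-\widetilde{C}S\Big[\tfrac{2}{(2^*_s)^2}\ln S^{\frac{N}{2s}}-\tfrac{2}{2^*_s}S^{-\frac{N}{2s}}\int_{\mathbb{R}^N}U^{2^*_s}\ln U\,dx\Big]\Big\}+o(\mu_j^{2s}).
\end{equation*}
Since $S\cdot S^{-\frac{N}{2s}}=S^{\frac{2s-N}{2s}}=S^{-\frac{N-2s}{2s}}$, every term carrying the factor $\widetilde{C}$ is the same in the two displays; subtracting, what survives is $S^{-\frac{N-2s}{2s}}\big(V(\hat{x}_0)-V(x_0)\big)\mu_j^{2s}\int_{\mathbb{R}^N}U^2dx+o(\mu_j^{2s})\ge0$. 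Dividing by $\mu_j^{2s}$, letting $j\to\infty$ and using $S^{-\frac{N-2s}{2s}}\int_{\mathbb{R}^N}U^2dx>0$ (recall $N>6s>4s$), we obtain $V(x_0)\le V(\hat{x}_0)=\min_{\mathbb{R}^N}V$, hence $V(x_0)=\min_{\mathbb{R}^N}V$; by continuity of $V$ this is $V(x_{\epsilon_j})\to\min_{\mathbb{R}^N}V$.

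It then suffices to exclude the case $|x_{\epsilon_j}|\to\infty$. If this happened, then, since $(V_1)$ forces $V(x)\to V_\infty$ as $|x|\to\infty$, the rescaled potentials $V(x_{\epsilon_j}+\mu_j\,\cdot)$ stay uniformly bounded and converge to the \emph{constant} $V_\infty$ locally uniformly. Since the arguments of Sections \ref{S2}--\ref{S3} behind Theorem \ref{th6} use the centred potential only through its $L^\infty$ bound and this locally uniform limit (Section \ref{S2} in fact already treats the unbounded case separately, cf. Lemmas \ref{conver3+} and \ref{DEC}), together with $\epsilon_j/\mu_j^{2s}\to\widetilde{C}$, the expansion of Theorem \ref{th6} holds with $V(x_0)$ replaced by $V_\infty$ and $\widetilde{C}_{N,s}$ by $\widetilde{C}$; likewise the proof of Theorem \ref{th5} uses the centre only via $\epsilon_j=\widetilde{C}_{N,s}\mu_j^{2s}+o(\mu_j^{2s})$, hence its bound holds with $\widetilde{C}$ in place of $\widetilde{C}_{N,s}$. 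Repeating the subtraction above, the $\widetilde{C}$–terms cancel and we arrive at $V_\infty\le V(\hat{x}_0)=\inf_{\mathbb{R}^N}V$, contradicting $\inf_{\mathbb{R}^N}V<\sup_{\mathbb{R}^N}V=V_\infty$. Hence $\{x_{\epsilon_j}\}$ is bounded and the previous paragraph concludes the proof.

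The step I expect to require the most care is the unbounded case just sketched: one has to re-run the whole chain leading to Theorem \ref{th6} — the uniform bound $v_{\epsilon_j}\le CU$ of Lemma \ref{comp} and the decay estimates of Section \ref{S2}, the solvability modulo $X$ of the limiting linearized equation granted by the nondegeneracy Lemma \ref{JMY13}, and the ensuing energy expansion — now with the rescaled potential converging to the value $V_\infty$ coming from infinity instead of to $V(x_0)$ at a finite point, and to note that the precise value of $\widetilde{C}=\lim\epsilon_j/\mu_j^{2s}$, which in this case need not be given by the closed formula of Proposition \ref{asy1} (because $x\cdot\nabla V$ may fail to vanish at infinity), plays no role, since all $\widetilde{C}$–dependent contributions cancel in the comparison.
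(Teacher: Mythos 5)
Your proposal is correct and follows essentially the same route as the paper: the bounded case is settled by comparing the exact expansion of Theorem \ref{th6} with the upper bound of Theorem \ref{th5} (with the $\widetilde{C}$-terms cancelling, which you make explicit), and the unbounded case is excluded by rerunning those expansions with $V(x_0)$, $\widetilde{C}_{N,s}$ replaced by $V_\infty$ and the limit of $\epsilon_j/\mu_j^{2s}$, reaching $V_\infty\le\inf V$ and a contradiction. This matches the paper's argument, which it only sketches at the same points you flag as needing care.
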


\begin{proof} By Theorems \ref{th5} and  \ref{th6}, it is sufficient to prove    that $x_\epsilon$ remains bounded. We argue by contradiction. Assume that there exists a sequence $x_j\rightarrow \infty$ such that
     \begin{equation}\label{inf}
   \begin{split}
   \lim_{j\rightarrow \infty}V(x_j)=V_\infty>\inf\limits_{x\in \mathbb{R}^N} V(x).
   \end{split}
    \end{equation}
By Remark \ref{REM1},
$$\epsilon_j=A\mu_j^{2s}+o(\mu_j^{2s})$$ for some $A>0$. Analogous to the proof of Theorems \ref{th5} and  \ref{th6} with $\widetilde{C}_{N,s}$ and $V(x_0)$ replaced by $A$
and $V_\infty$, we get $V_\infty\leq \inf\limits_{x\in \mathbb{R}^N} V(x)$, which contradicts  (\ref{inf}).

\end{proof}

\noindent {\it Proof  of Theorem \ref{th3}}. It follows from Theorems  \ref{th2},  \ref{th5},  \ref{th6}  and \ref{th7}.

\section{Local uniqueness: proof  of Theorem \ref{th4}} \label{S4}

\noindent Let us  argue by contradiction.  Suppose that there exists a sequence $\epsilon_j\rightarrow 0$ and two ground  states  far apart, namely $u_j^1:=u_{\epsilon_j}^1$ and $u_j^2:=u_{\epsilon_j}^2$.
Set
$$v_j^i(x):=(\mu_j^i)^{\frac{2s}{2_s^*-2-\epsilon_j}}u_j^i(\mu_j^i x),\ \ i=1,2.$$
Then $v_j^i\rightarrow U$ in $C_{loc}^{2,\beta}(\mathbb{R}^N)$ for $i=1,2$ as $j\rightarrow \infty.$

\noindent Assume further that $v_j^1\not=v_j^2$.
Set $$\theta_j:=v_j^1-v_j^2,\ \ \psi_j:=\frac{v_j^1-v_j^2}{\|v_j^1-v_j^2\|_\infty}.$$ Then
\begin{equation}\label{un1}
   \begin{split}
 (-\Delta)^s \psi_j+(\mu_j^1)^{2s}V(\mu_j^1x)v_j^1-(\mu_j^2)^{2s}V(\mu_j^2 x)v_j^2=\Phi_n\psi_n \ \ \text{in} \ \ \mathbb{R}^N,
 \end{split}
 \end{equation}
where
\begin{equation}\label{un2}
   \begin{split}
 \Phi_n=(2_s^*-1-\epsilon_j)\int_0^1\Big[tv_j^1(x)+(1-t)v_j^2(x)\Big]^{2_s^*-2-\epsilon_j}dt.
   \end{split}
 \end{equation}
Since $\|v_j^i\|_\infty=1$, $i=1,2$, by standard regularity  we have
 $\psi_j\rightarrow \psi$ in $C_{loc}^{2,\beta}(\mathbb{R}^N)$.  By Lemma  \ref{conver2}, we have that $\{\psi_j\}$ is uniformly
 bounded in $H_{V}^s(\mathbb{R}^N)$. Without loss of generality, we may assume that  $\psi_n\rightharpoonup \psi$ in $H_{V}^s(\mathbb{R}^N)$.

\noindent From (\ref{un1}), we have
\begin{equation}\label{un3}
   \begin{split}
 &\int_{\mathbb{R}^N}(-\Delta)^{\frac{s}{2}} \psi_j (-\Delta)^{\frac{s}{2}} \varphi dx \\
 &=-(\mu_j^1)^{2s} \int_{\mathbb{R}^N} V(\mu_j^1x)v_j^1\varphi dx
 +(\mu_j^2)^{2s} \int_{\mathbb{R}^N} V(\mu_j^2 x)v_j^2 \varphi dx\\& + \int_{\mathbb{R}^N}\Phi_n\psi_n \varphi dx,\  \ \forall \varphi \in C_0^\infty(\mathbb{R}^N).
 \end{split}
 \end{equation}
Taking $j\rightarrow \infty$ in (\ref{un3}), we get
\begin{equation}\label{un4}
   \begin{split}
 (-\Delta)^s \psi =(2_s^*-1)U^{2_s^*-2}\psi \ \ \text{in} \ \ \mathbb{R}^N.
 \end{split}
 \end{equation}
Note that  $\|\psi_j\|_\infty=1$ implies $\|\psi\|_\infty=1$. By Lemma \ref{JMY13}, $$\psi\in X=\text{span}\left\{\frac{\partial U}{\partial x_1}, \cdots, \frac{\partial U}{\partial x_N}, \frac{N-2s}{2}U+x\cdot \nabla U \right\}.$$
\noindent On the other hand, since  $v_j^i$ is radially symmetric,   $\psi$ is a radial function as well. Thus,   
$$\psi(x)=c\frac{\lambda^2-|x|^2}{(\lambda^2+|x|^2)^{\frac{N-2s+2}{2}}}$$
for some constant $c\in \mathbb{R}$.

\noindent We next actually prove that $c=0$. Indeed, otherwise assume for simplicity $c=1$.
By Pohozaev's identity, we have
\begin{equation}\label{un5}
   \begin{split}
& \frac{1}{N} (\mu_j^i)^{2s} \int_{\mathbb{R}^N} V(\mu_j^i x) |v_j^i|^2dx+\frac{1}{2N}(\mu_j^i)^{2s+1} \int_{\mathbb{R}^N}  x\cdot \nabla V(\mu_j^i x) |v_j^i|^2dx\\&= \frac{\epsilon_j}{2_s^*(2_s^*-\epsilon_j)} \int_{\mathbb{R}^N} |v_j^i|^{2_s^*-\epsilon_j}dx, \ \ i=1,2.
 \end{split}
 \end{equation}
By Remark \ref{REM1}, we get $(\mu_j^i)^{2s}\sim \epsilon_j$. Thus, from  (\ref{un5}), we have
\begin{equation}\label{un6}
   \begin{split}
 \frac{V_\infty}{N}   \int_{\mathbb{R}^N}\psi_j(   v_j^1+   v_j^2)dx+o(1)\geq  \frac{2_s^*-\epsilon_j}{2_s^*(2_s^*-\epsilon_j)} \int_{\mathbb{R}^N} \psi_j \int_0^1 \Big[ tv_j^1+(1-t)v_j^2 \Big]^{2_s^*-1-\epsilon_j}dtdx.
 \end{split}
 \end{equation}
Notice that
\begin{equation}\label{un6}
   \begin{split}
 \lim_{j\rightarrow \infty} \int_{\mathbb{R}^N}\psi_j(   v_j^1+   v_j^2)dx =2\int_{\mathbb{R}^N}\psi Udx=2\lambda^{N-2s}\int_{\mathbb{R}^N}\frac{\lambda^2-|x|^2}{(\lambda^2+|x|^2)^{ N-2s+1}}dx.
 \end{split}
 \end{equation}
Direct calculations show that
\begin{equation}\label{un7}
   \begin{split}
  \int_{\mathbb{R}^N}\frac{\lambda^2-|x|^2}{(\lambda^2+|x|^2)^{ N-2s+1}}dx<0.
 \end{split}
 \end{equation}
Thus, from (\ref{un6}),  we obtain
\begin{equation}\label{un8}
   \begin{split}
 \lim_{j\rightarrow \infty} \int_{\mathbb{R}^N}\psi_j(   v_j^1+   v_j^2)dx <0.
 \end{split}
 \end{equation}

\noindent On the other hand, we have
\begin{equation}\label{un9}
   \begin{split}
  \lim_{j\rightarrow \infty}  \int_{\mathbb{R}^N} \psi_j \int_0^1 \Big[ tv_j^1+(1-t)v_j^2 \Big]^{2_s^*-1-\epsilon_j}dtdx= \int_{\mathbb{R}^N} \psi U^{2_s^*-1}.
 \end{split}
 \end{equation}
By a suitable scaling we end up with 
\begin{equation}\label{un10}
   \begin{split}
  \int_{\mathbb{R}^N} \psi U^{2_s^*-1}\sim& \int_0^{+\infty}\frac{(1-r^2)r^{N-1}}{(1+r^2)^{1+N}}dr=0.
 \end{split}
 \end{equation}
By combining (\ref{un6})--(\ref{un10}),  we get a contradiction.

\noindent Thus, $c=0$ and $\psi_j\rightarrow 0$ in  $\Omega\subset\subset \mathbb{R}^N$.  If we let $y_j\in \mathbb{R}^N$ such that $\psi_j(y_j)=\|\psi_j\|_\infty=1$, then $y_j\rightarrow +\infty$ as $j\rightarrow \infty.$  However, by Lemma \ref{comp}, we get $v_j^i(x)\leq C\frac{1}{|x|^{N-2s}}$, $i=1,2$  and thus $|\psi_j(x)|\leq C\frac{1}{|x|^{N-2s}}$, which implies $\psi_j(x)\rightarrow 0$ as $|x|\rightarrow \infty$. This is a contradiction since  $\psi_j(y_j)=1$.

\noindent The proof of Theorem \ref{th4} is now complete.

\end{document}